\newcommand{\Xcomment}[1]{}
\renewcommand{\section}{\@startsection{section}{1}{0pt}%
{-3.5ex plus -1ex minus -.2ex}{2.3ex plus .2ex}%
{\normalfont\Large}}
\renewcommand{\subsection}{\@startsection{subsection}{2}{0pt}%
{-3.0ex plus -1ex minus -.2ex}{1.5ex plus .2ex}%
{\normalfont\normalsize\bf}}
\newtheorem{theorem}{Theorem}[section]
\newtheorem{lemma}[theorem]{Lemma}
\newtheorem{corollary}[theorem]{Corollary}
\newtheorem{prop}[theorem]{Proposition}
\newenvironment{proof}{\noindent{\it Proof.\/}}%
{$\qed$\vskip 0.1in}
\newenvironment{numitem}{\refstepcounter{equation}\begin{enumerate}%
\item[(\arabic{equation})]$\quad$}{\end{enumerate}}
\newcommand{\refeq}[1]{(\ref{eq:#1})}  
\def\rest#1{_{\,\vrule height 1.8ex width 0.05em depth 0pt\, #1}}
\def\qed{ \ \vrule width.2cm height.2cm depth0cm}
\def\tilde{\widetilde}
\def\hat{\widehat}
\def\bar{\overline}
\def\eps{\epsilon}
\def\suppo{{\rm supp}}
\def\Rset{{\mathbb R}}
\def\Zset{{\mathbb Z}}
\def\Bscr{{\cal B}}
\def\Dscr{{\cal D}}
\def\Fscr{{\cal F}}
\def\Gscr{{\cal G}}
\def\Iscr{{\cal I}}
\def\Lscr{{\cal L}}
\def\Mscr{{\cal M}}
\def\Pscr{{\cal P}}
\def\Qscr{{\cal Q}}
\def\Rscr{{\cal R}}
\def\Sscr{{\cal S}}
\def\Tscr{{\cal T}}
\def\Wscr{{\cal W}}
\def\Xscr{{\cal X}}
\def\bfzero{{\bf 0}}
\def\Xdown{X_{\downarrow}}
\def\Xup{X^{\uparrow}}
\begin{document}

 \begin{center}
{\Large\bf On bases of tropical Pl\"ucker functions%
\footnote[1]{This research was supported in part by NWO--RFBR grant
047.011.2004.017 and by RFBR grant 05-01-02805 CNRSL\_a.}}
 \end{center}

 \begin{center}
{\sc Vladimir~I.~Danilov}\footnote[2] {Central Institute of Economics and
Mathematics of the RAS, 47, Nakhimovskii Prospect, 117418 Moscow, Russia;
email: danilov@cemi.rssi.ru.},
{\sc Alexander~V.~Karzanov}\footnote[3]{Institute for System Analysis of
the RAS, 9, Prospect 60 Let Oktyabrya, 117312 Moscow, Russia; email:
sasha@cs.isa.ru. A part of this paper was written when this author was
visiting Equipe Combinatoire et Optimisation, Univ. Paris-6 and supported
by a grant from CNRS, France.},
{\sc and Gleb~A.~Koshevoy}\footnote[4]{Central Institute of Economics and
Mathematics of the RAS, 47, Nakhimovskii Prospect, 117418 Moscow, Russia;
email: koshevoy@cemi.rssi.ru}
\end{center}


\begin{quote}
{\bf Abstract.} \small We consider functions $f:B\to\Rset$ that obey
tropical analogs of classical Pl\"ucker relations on minors of a matrix.
The most general set $B$ that we deal with in this paper is of the form
$\{x\in \Zset^n\colon 0\le x\le a,\; m\le x_1+\ldots+x_n\le m'\}$ (a
rectangular integer box ``truncated from below and above''). We construct
a basis for the set $\Tscr$ of tropical Pl\"ucker functions on $B$, a
subset $\Bscr\subseteq B$ such that the restriction map
$\Tscr\to\Rset^\Bscr$ is bijective. Also we characterize, in terms of the
restriction to the basis, the classes of submodular, so-called
skew-submodular, and discrete concave functions in $\Tscr$, discuss a
tropical analogue of the Laurentness property, and present other results.

 \medskip
{\em Keywords}\,: Pl\"ucker relation, tropicalization, octahedron
recurrence, submodular function, rhombic tiling,
Laurent phenomenon

\medskip
{\em AMS Subject Classification}\, 05C75, 05E99
  \end{quote}

\parskip=3pt


\section{Introduction}  \label{sec:intr}

There are nice algebraic relations on minors of a matrix that have been
established long ago. For a positive integer $n$, let $[n]$ denote the
ordered set of $n$ elements $1,2,\ldots,n$. For an $n\times n$ matrix $M$
and a set $J\subseteq [n]$, let $\Delta_J$ denote the determinant of the
submatrix of $M$ formed by the column set $J$ and the row set
$\{1,\ldots,|J|\}$. Then: (i) for any triple $i<j<k$ in $[n]$ and
$X\subseteq [n]-\{i,j,k\}$,
  $$
\Delta_{Xik}\Delta_{Xj}=\Delta_{Xij}\Delta_{Xk}+\Delta_{Xi}\Delta_{Xjk};
  $$
and (ii) for any quadruple $i<j<k<\ell$ in $[n]$ and $X\subseteq
[n]-\{i,j,k,\ell\}$,
  $$
\Delta_{Xik}\Delta_{Xj\ell}=\Delta_{Xij}\Delta_{Xk\ell}
  +\Delta_{Xi\ell}\Delta_{Xjk},
  $$
where for brevity we write $Xi'\ldots j'$ instead of
$X\cup\{i'\}\cup\ldots\cup\{j'\}$. These equalities represent simplest
cases of so-called {\em Pl\"ucker's relations}. (About classical
Pl\"ucker's type quadratic relations involving flag minors of a matrix,
see, e.g.,~\cite{Fu}).

Relations as above can be stated in an abstract form; namely, one can
consider a function $g$ on the {\em Boolean (hyper)cube} $\{0,1\}^{[n]}$
or on an appropriate part of it and impose the condition
         $$
  g(Xik)g(Xj)=g(Xij)g(Xk)+g(Xi)g(Xjk),
      $$
and/or
   $$
 g(Xik)g(Xjl)=g(Xij)g(Xk\ell)+g(Xi\ell)g(Xjk),
   $$
for $X,i,j,k,\ell$ as above (identifying a subset of $[n]$ with the
corresponding 0,1 vector). Such a $g$ is said to be an {\em algebraic
Pl\"ucker function}, or an {\em AP-function}. For simplicity, in what
follows we will deal with only real-valued functions.

Tropical analogs of these relations, coming up when multiplication is
replaced by addition and addition is replaced by taking maximum, are
viewed as
  \begin{equation}  \label{eq:P3}
 f(Xik)+f(Xj)=\max\{f(Xij)+f(Xk),\; f(Xi)+f(Xjk)\},
  \end{equation}
and
  \begin{equation}  \label{eq:P4}
 f(Xik)+f(Xjl)=\max\{f(Xij)+f(Xk\ell),\; f(Xi\ell)+f(Xjk)\},
  \end{equation}
(see, e.g.,~\cite[Sec.~2]{BFZ}), and a function $f$ obeying~\refeq{P3}
and~\refeq{P4} is said to be a {\em tropical Pl\"ucker function}.

In this paper we do not restrict ourselves by merely the Boolean case, but
consider functions defined on a set of a more general form, namely, on a
box or a truncated box in $\Zset^{[n]}$, and satisfying natural
generalizations of~\refeq{P3} and~\refeq{P4} to such domains.

More precisely, let $a=(a_1 ,...,a_n )$ be an $n$-tuple of natural
numbers; we refer to $|a|:=a_1+\ldots+a_n$ as the {\em size} of $a$. The
$a$-{\em box} is the set $B(a)$ of integer vectors $x=(x_1,...,x_n)$
satisfying the box constraints $0\le x_i \le a_i$ for all $i\in [n]$. In
particular, the Boolean cube $2^{[n]}$ is just the box $B({\bf 1})$, where
${\bf 1}$ is the all-unit vector. Given integers $0\le m\le m'\le |a|$, by
the {\em truncated box} bounded by $m,m'$ we mean the subset of vectors
$x\in B(a)$ with $m\le|x| \le m'$. It is denoted by $B_m^{m'}(a)$ and the
number $m'-m$ is regarded as its {\em width}. So $B(a)=B_0^{|a|}(a)$. For
$m\le p\le m'$, the $p$th {\em layer} of $B_m^{m'}(a)$ is formed by the
vectors of size $p$ in it. Sometimes we will deal with a shifted box
$B(a'|\,a''):=\{x\in\Zset^n\colon a'\le x\le a''\}$, where
$a',a''\in\Zset^n$ and $a'\le a''$.

Three special cases will be important to us. When $a$ is all-unit, we
obtain the {\em truncated Boolean cube} $B_m^{m'}({\bf 1})$. When $m=m'$,
we obtain a truncated box with zero width; it may be denoted as $B_m(a)$
and called a {\em slice}. When, in addition, $a={\bf 1}$, the slice turns
into the {\em hyper-simplex} $\{S\subseteq [n]\colon |S|=m\}$.

 \smallskip
{\bf Definition.} Let $B$ be a truncated box $B_m^{m'}(a)$. A function
$f:B\to\Rset$ is called a {\em tropical Pl\"ucker function}, or a {\em
TP-function} for short, if it satisfies the {\em TP3-relation}:
  \begin{eqnarray}
 &&f(x+1_i+1_k)+f(x+1_j)   \label{eq:GP3} \\
 &&\qquad\qquad   =\max\{f(x+1_i+1_j)+f(x+1_k),\; f(x+1_i)+f(x+1_j+1_k)\}
       \nonumber
  \end{eqnarray}
for any $x$ and $1\le i<j<k\le n$, and satisfies the {\em TP4-relation}:
  \begin{eqnarray}
 &&f(x+1_{i}+1_{k})+f(x+1_{j}+1_{\ell})  \label{eq:GP4} \\
 &&=\max\{f(x+1_{i}+1_{j})+f(x+1_{k}+1_{\ell}),\;
 f(x+1_{i}+1_{\ell})+f(x+1_{j}+1_{k})\}
 \nonumber
  \end{eqnarray}
for any $x$ and $1\le i<j<k<\ell\le n$, provided that all six vectors
occurring as arguments in~\refeq{GP3} belong to $B$, and similarly
for~\refeq{GP4}. Here $1_q$ denotes $q$th unit base vector in
$\Rset^{[n]}$.

 \smallskip
So each TP4-relation concerns vectors of the same layer and vanishes in
$p$th layer for $p=0,1,|a|-1,|a|$ or when $n<4$, while each TP3-one
concerns vectors of two neighboring layers. The zero vector (as well as
$a$) occurs in no relation at all and we will often assume that $f({\bf
0})=0$. When $a={\bf 1}$, \refeq{GP3} and~\refeq{GP4} turn into~\refeq{P3}
and~\refeq{P4}, respectively. Sometimes, when all six vectors occurring as
arguments in~\refeq{GP3} belong to $B$, we will refer to $(x,i,j,k)$ as a
{\em (feasible) cortege}, and similarly for $(x,i,j,k,\ell)$
(concerning~\refeq{GP4}).

Functions satisfying algebraic or tropical Pl\"ucker-type relations have
been studied in literature. Such functions on Boolean cubes are considered
by Berenstein, Fomin and Zelevinsky~\cite{BFZ} in connection with the
total positivity and Lusztig's canonical bases; see also~\cite{K1}.
Henriques~\cite{H} considers AP-functions on the set of integer solutions
of the system $0\le x_i \le m-1$, $x_1+\ldots+x_n=m$, and refers to the
work of Fock and Goncharov~\cite{FG} for results on such functions. The
tropical analogs of those AP-functions, with one additional condition (of
rhombic concavity) imposed on them, form a class of polymatroidal concave
functions, or $M$-functions, studied by Murota~\cite{M}; see
also~\cite{La}. Tropical Pl\"ucker functions in dimension 3 and 4 are
considered in~\cite{DK,KTW,Sp} in connection with the so-called octahedron
recurrence. An instance of Pl\"ucker relations is a relation on six
lengths between four horocycles in the hyperbolic plane with distinct
centers at infinity~\cite{PM}. The TP4-functions on a hyper-simplex form
a special case of so-called {\em valuated matroids} introduced by Dress
and Wenzel~\cite{DW}.

 \smallskip
The set of TP-functions on $B=B_m^{m'}(a)$ is denoted by $\Tscr(B)$. In
this paper we give an explicit construction of a basis for the
TP-functions on $B$.

 \smallskip
{\bf Definition.} A subset $\Bscr\subseteq B=B_m^{m'}(a)$ is called a {\em
TP-basis}, or simply a {\em basis}, if the restriction map
$res:\Tscr(B)\to \Rset^{\Bscr}$ is a bijection. In other words, each
TP-function on $B$ is determined by by values on $\Bscr$, and moreover,
values on $\Bscr$ can be chosen arbitrarily.

 \smallskip
Showing that such a basis does exist, we construct a basis of a rather
simple form, as follows.

For a nonzero vector $x\in B(a)$, let $c(x)$ and $d(x)$ denote,
respectively, the first and last elements (w.r.t. the order in $[n]$) in
the support $\suppo(x)=\{i\colon x_i\ne 0\}$ of $x$. We say that $x$ is a
{\em fuzzy-interval}, or, briefly, a {\em fint}, if $x_i=a_i$ for all
$c(x)<i<d(x)$. We say that $x$ is a {\em sesquialteral fuzzy-interval}, or
a {\em sint}, if $x$ is not a fint and is representable as the sum of two
fints $x',x''$ such that $d(x')<c(x'')$, and $x'_i=a_i$ for
$i=1,\ldots,d(x')-1$. When $a={\bf 1}$, a fint turns into an {\em
interval} $\{c,c+1,\ldots,d\}$ in $[n]$, denoted as $[c..d]$, and a sint
turns into a {\em sesquialteral interval}, a set of the form $[1..d_1]\cup
[c_2..d_2]$ with $c_2>d_1+1$.

Let $Int(a;p)$ and $Sint(a;p)$ denote the sets of fints and sints $x$ of
size $|x|=p$ in $B(a)$, respectively. Our main theorem is the following.

\medskip
 \noindent
{\bf Theorem A} {\em The set $\Bscr:=Sint(a;m)\cup Int(a;m)\cup
Int(a;m+1)\cup\ldots\cup I(a;m')$ is a TP-basis for the truncated box
$B_m^{m'}(a)$.}

 \medskip
We call the basis $\Bscr$ figured in this theorem {\em standard}. (When
$m=0$, we default include the zero vector in the basis as well, without
indicating it explicitly.) Observe that the standard basis involves sints
only from the lowest layer. In particular, the set $Int(a;1)\cup\ldots\cup
Int(a;n)$ gives a basis for the box $B(a)$, and $Sint(a;m)\cup Int(a;m)$
gives a basis for the slice $B_m(a)$. Due to this theorem, $\Tscr(B)$ with
$B=B_m^{m'}(a)$ is representable as a $|\Bscr|$-dimensional polyhedral
conic complex (a fan) in $\Rset^{B}$. It contains a large lineal since any
quasi-separable function of the form
$\varphi_1(x_1)+...+\varphi_n(x_n)+\varphi_0(x_1+...+x_n)$ (where
$\varphi_i$ is an arbitrary function in one variable) is a TP-function of
which addition to any other TP-function maintains the TP-relations.

To illustrate the theorem, consider the hyper-simplex $H$ for $n=4$ and
$m=2$; it consists  of six sets, which may be denoted as  $12,\ 13,\ 14,\
23,\ 24,\ 34$. By adding an appropriate quasi-separable function, any
TP-function $f$ on $H$ is transformed so as to take zero value on the
points $12,\ 13,\ 14,\ 24$. Then the unique TP4-relation (concerning $13,\
24$) implies $\max\{f(23),f(34)\}=0$. This means that, modulo the lineal,
the set of TP-function is represented as the union of two rays in
$\Rset^2$, namely, $(\Rset_-,0)$ and $(0,\Rset_-)$, and therefore, it is
piecewise-linear-morphic to the line $\Rset$.

An easy consequence of Theorem~A is that a TP-function $f$ on a truncated
box $B_m^{m'}(a)$ can be extended to a TP-function on the entire box
$B(a)$. Indeed, first we take the restriction of $f$ to the standard basis
for $B_m^{m'}(a)$ and extend it to the standard basis for $B_m^{|a|}(a)$
by assigning arbitrary values on $Int(a;m'+1) \cup\ldots\cup Int(a;|a|)$.
This determines a TP-function $g$ on $B_m^{|a|}(a)$ coinciding with $f$ on
$B_m^{m'}(a)$. Then we consider the {\em complementary} function $g^\ast$
for $g$, which is defined on the vectors $x\in B_0^{|a|-m}(a)$ by
$g^\ast(x):=g(a-x)$; clearly $g^\ast$ is a TP-function again. Finally, we
extend $g^\ast$ into a TP-function $h$ on $B_0^{|a|}(a)=B(a)$ by acting as
at the first step. Then $h^\ast$ is the desired extension of $f$ to
$B(a)$.

Special cases of Theorem~A have appeared in some earlier works. The
corresponding result for Boolean cubes is given in~\cite{BFZ}, and for
hyper-simplexes in~\cite{Po}; see also~\cite{Sc,SW}. The algebraic version
in the case of a ``simplicial'' slice $\{x\in\Zset_+^n \colon \sum
x_i=m\}$ was announced in~\cite{H} with a claim that it could be obtained
by use of results on cluster algebras in~\cite{FG}.

Our proof of Theorem~A is direct and relatively short. It consists of
three stages. At the first stage, we prove that the corresponding
restriction map $res$ is injective. At the second stage, we prove the
surjectivity of $res$ for the Boolean version (Theorem~A$'$ in
Section~\ref{sec:boolean}). At the third stage, we reduce the general case
to the Boolean one. The core of the whole proof is a {\em flow-in-matrix}
method, which consists in representing {\em any} TP-function $f$ on a
truncated Boolean cube by use of maximum weight flows (systems of paths)
in a weighted grid associated with an $|a|\times m'$ matrix whose entries
are determined by the values of $f$ on the intervals and the $m$-sized
sesquialteral intervals in $[n]$.

 \smallskip
Another group of results in this paper concerns an interrelation between
TP-bases and rhombic tilings, and characterizations of special classes of
TP-functions.

Given a basis $\Bscr$ (e.g., the standard one), we can produce more bases
by making a series of elementary transformations relying on TP3- or
TP4-relations, referring to them as {\em mutations}, or {\em flips}. More
precisely, suppose there is a cortege $(x,i,j,k)$ such that the four
vectors occurring in the right hand side of~\refeq{GP3} and one vector $y$
in the left hand side, say, $y=x+1_j$, belong to $\Bscr$. It is easy to
see that the replacement in $\Bscr$ of $y$ by the other vector in the left
hand side, namely, $x+1_i+1_k$, results in a basis as well; we can further
transform the latter basis in a similar way. Analogous transformations via
TP4-relations can be applied to corresponding corteges $(x,i,j,k,\ell)$.

When dealing with an entire box $B(a)$ (in particular, with the cube
$2^{[n]}$), the standard basis, as well as many other (but not all) bases
obtained from it by a series of TP3-mutations, can be associated with a
{\em rhombic tiling diagram} on a $2n$-gone (a zonogon), giving a nice
visualization of the basis. (For rhombic tilings, see, e.g.,~\cite{ER}.)

To illustrate this, consider the cube $2^{[3]}$. The standard basis
$\Bscr$ for it consists of the six intervals $1,2,3,12,23,123$ to which we
also add the empty interval $\{\emptyset\}$. There is only one basis
$\Bscr'$ different from $\Bscr$; it is obtained from $\Bscr$ by the
TP3-mutation $2\rightsquigarrow 13$. The cube and the rhombic tiling
diagrams for $\Bscr$ and $\Bscr'$ are drawn in the picture:

   \begin{center}
  \unitlength=1mm
    \begin{picture}(150,40)
  \put(10,10){\line(0,1){17}}
  \put(20,20){\line(0,1){17}}
  \put(25,5){\line(0,1){17}}
  \put(35,15){\line(0,1){17}}
  \put(25,5){\line(1,1){10}}
  \put(10,10){\line(1,1){10}}
  \put(25,22){\line(1,1){10}}
  \put(10,27){\line(1,1){10}}
  \put(10,10){\line(3,-1){15}}
  \put(20,20){\line(3,-1){15}}
  \put(10,27){\line(3,-1){15}}
  \put(20,37){\line(3,-1){15}}
  \put(10,10){\circle*{2}}
  \put(10,27){\circle*{2}}
  \put(25,5){\circle*{2}}
  \put(25,22){\circle*{2}}
  \put(20,37){\circle*{2}}
  \put(35,15){\circle*{2}}
  \put(35,32){\circle*{2}}
  \put(26,2){$\{\emptyset\}$}
  \put(6,8){1}
  \put(37,13){3}
  \put(16,20){13}
  \put(27,20){2}
  \put(5,26){12}
  \put(37,31){23}
  \put(22,37){123}
  \put(70,5){\line(0,1){15}}
  \put(58,11){\line(0,1){15}}
  \put(82,11){\line(0,1){15}}
  \put(70,5){\line(2,1){12}}
  \put(70,20){\line(2,1){12}}
  \put(58,26){\line(2,1){12}}
  \put(58,11){\line(2,-1){12}}
  \put(58,26){\line(2,-1){12}}
  \put(70,32){\line(2,-1){12}}
  \put(71,2){$\{\emptyset\}$}
  \put(55,10){1}
  \put(84,10){3}
  \put(72,17){2}
  \put(53,26){12}
  \put(83,26){23}
  \put(72,32){123}
  \put(90,17){\vector(1,0){20}}
  \put(90,18){flip $2\rightsquigarrow 13$}
  \put(130,17){\line(0,1){15}}
  \put(118,11){\line(0,1){15}}
  \put(142,11){\line(0,1){15}}
  \put(130,5){\line(2,1){12}}
  \put(118,26){\line(2,1){12}}
  \put(118,11){\line(2,1){12}}
  \put(118,11){\line(2,-1){12}}
  \put(130,32){\line(2,-1){12}}
  \put(130,17){\line(2,-1){12}}
  \put(131,2){$\{\emptyset\}$}
  \put(115,10){1}
  \put(144,10){3}
  \put(132,17){13}
  \put(113,26){12}
  \put(143,26){23}
  \put(132,32){123}
  \end{picture}
 \end{center}

We refer to a TP-basis for $B(a)$ that corresponds to a rhombic tiling as
a {\em normal} one. As an additional result, we give necessary and
sufficient conditions on a subset of $B(a)$ that can be extended into a
normal basis, and develop a polynomial-time algorithm to find such a basis
(a similar problem for the class of all bases seems to be more
sophisticated).

Using the correspondence between the normal bases and rhombic tilings, we
then study the classes of {\em submodular} and {\em skew-submodular}
TP-functions $f$ on a box $B(a)$, which means that $f$ satisfies the
inequalities of the form
  $$
f(x+1_i)+f(x+1_j)\ge f(x)+f(x+1_i+1_j)
  $$
in the former case, and of the form
  $$
  f(x+1_i+1_j)+f(x+1_j)\ge f(x+1_i)+f(x+2\cdot 1_j)
  $$
in the latter case. It turns out that each class admits a characterization
in terms of the restriction of $f$ to the standard basis (or even to an
arbitrary normal basis) $\Bscr$. More precisely, we show that for a
TP-function $f$, the above submodular (skew-submodular) inequalities are
propagated by the TP3-recurrence, starting from such inequalities within
$\Bscr$. Furthermore, we explain that when both the submodular and
skew-submodular inequalities hold, the function $f$ possesses the property
of {\em discrete concavity} (more precisely, polymatroidal concavity, in
the sense that for the minimal concave function $g$ on the convex hull of
$B(a)$ such that $g\rest{{B(a)}}\ge f$, all affine regions of $g$ are
generalized polymatroids).

 \smallskip
Finally, returning to the flow-in-matrix method and considering the
TP-functions $f$ on the cube $2^{[n]}$, we show that for each set
$X\subset [n]$ not in the standard basis, the value $f(X)$ is
expressed as a piece-wise linear convex function $h$ (invariant of $f$)
of which arguments are the values on the standard basis.
A similar property is shown to take place in
case of truncated cubes and entire boxes. This behavior of TP-functions
with respect to the standard basis can be regarded as exhibiting
a tropical analogue of the so-called {\em Laurent phenomenon}
(for the Laurent phenomenon under the cube recurrence, see~\cite{FZ,HS}).
Moreover, it turns out that all coefficients in the linear pieces of $h$
are integers between $-1$ and 2. Also there is an interesting
relation between such pieces and special Gelfand-Tsetlin patterns (or
semi-standard Young tableaux).

 \smallskip
This paper is organized as follows. In Section~\ref{sec:inject} we prove
the simpler part of Theorem~A, namely, that the corresponding restriction
map $res$ is injective. Also we explain there a rather surprising fact
that the TP4-relations follow from the TP3-ones unless $m=m'$. The other
part of Theorem~A, concerning the surjectivity of $res$, is more involved.
We prove the surjectivity for the Boolean case in
Section~\ref{sec:boolean}, and for the general case in
Section~\ref{sec:gen}, thus completing the proof of Theorem~A. Relations
between bases, their mutations and rhombic tiling diagrams are discussed
in Section~\ref{sec:tiling}; here we also consider the problem of
extendability of a given subset $X\subset B(a)$ into a normal basis.
Sections~\ref{sec:submod},~\ref{sec:skew} and~\ref{sec:concave} are
devoted, respectively, to submodular, skew-submodular and discrete concave
TP-functions. The concluding Section~\ref{sec:Laurent} discusses the
tropical Laurentness property for TP-functions.

 \smallskip
{\em Acknowledgement.} Gleb Koshevoy thanks IHES (Bures-sur-Yvette,
France) for financial support and hospitality.

\section{Injectivity of the restriction map} \label{sec:inject}

In this section we prove that the restriction map to the standard basis is
injective (which is simpler than the proof of its surjectivity, given
throughout Sections~\ref{sec:boolean} and~\ref{sec:gen}) and also
demonstrate an additional result.

Consider a truncated box $B=B_m^{m'}(a)$ and let $\Bscr:=Sint(a;m)\cup
Int(a;m)\cup Int(a;m+1)\cup\ldots\cup I(a;m')$. As in the Introduction,
given $x\in B$, we denote the first and last elements in the support
$\suppo(x)$ of $x$ by $c(x)$ and $d(x)$, respectively. Also we introduce
the following numbers:
   \begin{eqnarray}
&&\mbox{$\alpha(x)$ is the maximal $i\in[n]$ such that $i<d(x)$
  and $x_i<a_i$}; \label{eq:alphabeta} \\
&& \mbox{$\beta(x)$ is the maximal $i'\in[n]$ such that $i'<\alpha(x)$
  and $x_{i'}>0$};  \nonumber \\
&& \mbox{$\gamma(x)$ is the maximal $i''\in[n]$ such that
   $i''<\beta(x)$ and $x_{i''}<a_{i''}$}; \nonumber
   \end{eqnarray}
Observe that $\beta(x)$ does not exist if and only if $x$ is a fint
(fuzzy-interval), while $\gamma(S)$ does not exist if and only if $x$ is a
fint or a sint (sesquialteral fuzzy-interval).

 \begin{prop} \label{pr:inj_gen}
The restriction map $res:\Tscr(B)\to \Rset^\Bscr$ is injective, i.e., any
TP-function on $B$ is determined by its values within $\Bscr$.
  \end{prop}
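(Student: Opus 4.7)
The plan is to prove injectivity by strong induction on a well-order $\prec$ on $B$: I want to show that for every $x\in B$ the value $f(x)$ is determined by the restriction $f|_\Bscr$. The order $\prec$ is designed so that every element of $\Bscr$ is $\prec$-minimal, and so that for each $x\in B\setminus\Bscr$ the statistics $\alpha(x),\beta(x),\gamma(x)$ pick out a canonical TP3- or TP4-relation in which $x$ appears together with other vectors that all lie either in $\Bscr$ or strictly $\prec$-below $x$. Concretely I take $\prec$ to be lex-primary on layer---processing layer $m$ first and then layers $m+1,\dots,m'$ in increasing order---refined by a secondary order on each layer to be specified.

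\textbf{Case 1: $|x|>m$ and $x$ is not a fint.} Then both $\alpha(x)$ and $\beta(x)$ exist. Put $y := x - 1_{\beta(x)} - 1_{d(x)}$ and consider the TP3-relation \refeq{GP3} with cortege $(y,\beta(x),\alpha(x),d(x))$. Feasibility is immediate: $y\ge 0$ because $x_{\beta(x)},x_{d(x)}\ge 1$; the upper-bound check $y_{\alpha(x)}+1\le a_{\alpha(x)}$ holds by the very definition of $\alpha(x)$; and the six participating vectors have sizes $|x|-1$ or $|x|$, both in $[m,m']$ since $|x|>m$. Rearranging \refeq{GP3} yields
\[
 f(x)=\max\{\,f(x+1_{\alpha(x)}-1_{d(x)})+f(x-1_{\beta(x)}),\;f(x-1_{d(x)})+f(x+1_{\alpha(x)}-1_{\beta(x)})\,\}-f(y+1_{\alpha(x)}).
\]
Among the five ``other'' vectors, three ($y+1_{\alpha(x)}$, $x-1_{\beta(x)}$, $x-1_{d(x)}$) lie in layer $|x|-1$ and are known by the primary layer induction, while the remaining two, $x+1_{\alpha(x)}-1_{d(x)}$ and $x+1_{\alpha(x)}-1_{\beta(x)}$, lie in layer $|x|$ and must be $\prec$-below $x$ by the secondary order.

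\textbf{Case 2: $|x|=m$ and $x$ is neither a fint nor a sint.} Then $\gamma(x)$ also exists. With the same $y := x-1_{\beta(x)}-1_{d(x)}$ and quadruple $(i,j,k,\ell)=(\gamma(x),\beta(x),\alpha(x),d(x))$, the TP4-relation \refeq{GP4} is feasible (the additional constraint $x_{\gamma(x)}<a_{\gamma(x)}$ is built into the definition of $\gamma$) and reads
\[
 f(x)=\max\{\,f(v_1)+f(v_2),\;f(v_3)+f(v_4)\,\}-f(y+1_{\gamma(x)}+1_{\alpha(x)}),
\]
where $v_1,\dots,v_4$ together with $y+1_{\gamma(x)}+1_{\alpha(x)}$ are five layer-$m$ vectors obtained from $x$ by small mass shifts between the positions $\gamma<\beta<\alpha<d$. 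Since layer $m$ is the lowest, no primary induction is available here, so all five auxiliary vectors must be shown to be $\prec$-smaller than $x$ in the secondary order (or to already lie in $Sint(a;m)\cup Int(a;m)=\Bscr\cap\{|\cdot|=m\}$).

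\textbf{Main obstacle.} The substantive work is to fix the secondary order and verify strict $\prec$-decrease under all the reductions above. The difficulty in case~1 is that $-1_{\beta(x)}+1_{\alpha(x)}$ moves mass rightward while $-1_{d(x)}+1_{\alpha(x)}$ moves it leftward toward $\alpha(x)$, so neither plain lex nor reverse lex on the coordinate sequence works uniformly; a successful measure must quantify ``distance to a fint'' (e.g., a weighted combination of the number of gaps inside the support of $x$, the deficit $\sum_{c(x)<i<d(x)}(a_i-x_i)$, and a lex tiebreaker), and the verification requires a case split on whether $x_{\alpha(x)}+1=a_{\alpha(x)}$, whether $x_{\beta(x)}=1$, and whether $x_{d(x)}=1$---each of which can change $c,d,\alpha,\beta$ for the reduced vector. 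The analogous analysis in case~2 tracks $c,d,\alpha,\beta,\gamma$ under the four basic shifts and, crucially, shows that the auxiliary vector $y+1_{\gamma(x)}+1_{\alpha(x)}$ is either a fint/sint (and hence in $\Bscr$) or is $\prec$-smaller than $x$. This combinatorial bookkeeping is the technical heart of the argument.
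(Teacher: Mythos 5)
Your reductions are exactly the ones the paper uses: for $|x|>m$ the TP3-cortege $(x-1_{\beta(x)}-1_{d(x)},\beta(x),\alpha(x),d(x))$, and on the bottom layer the TP4-cortege with $(\gamma(x),\beta(x),\alpha(x),d(x))$; your feasibility checks are also fine. But the proposal stops precisely where the actual proof begins: you never define the secondary order, and you explicitly defer the verification that the same-layer auxiliary vectors (namely $x+1_{\alpha(x)}-1_{d(x)}$ and $x+1_{\alpha(x)}-1_{\beta(x)}$ in your Case~1, and all five vectors in your Case~2) are either in $\Bscr$ or strictly smaller. Without a concrete well-founded measure and a proof of strict decrease, the recursion is not shown to terminate---a priori it could cycle among non-basis vectors of the same layer---so the injectivity claim is not established. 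Announcing that ``a successful measure must quantify distance to a fint'' and listing candidate ingredients is not a proof of their monotonicity under the moves $-1_{\beta}+1_{\alpha}$, $-1_{d}+1_{\alpha}$, etc., and, as you yourself note, that bookkeeping is the technical heart of the argument.

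For comparison, the paper closes this gap with the single explicit potential $\eta(x)=|a|\bigl(\beta(x)+d(x)\bigr)+x_{\beta(x)}+x_{d(x)}$ (defined whenever $\beta(x)$ exists), using the layer as the primary parameter and $\eta$ as the secondary one. The weighting by $|a|$ makes the pair $(\beta(x),d(x))$ dominate the correction terms $x_{\beta(x)}+x_{d(x)}$, and a short case check shows that each auxiliary vector of the same layer is a fint (or sint, on layer $m$) or has strictly smaller $\eta$; for instance, $x+1_{\alpha}-1_{\beta}$ keeps $d$ fixed and either decreases $x_{\beta}$ by one (if $\beta$ survives as the new $\beta$) or strictly decreases $\beta$ itself. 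To make your argument complete you would need to supply such a measure and carry out the corresponding case analysis (including the bottom-layer TP4 case, where $x+1_{\gamma}+1_{\alpha}-1_{\beta}-1_{d}$ and its companions must be handled); adopting $\eta$ as above is the most direct way to do so.
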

  \begin{proof}
Let $f\in\Tscr(B)$ and $x\in B$. When $\beta(x)$ exists, define
  \begin{equation}\label{eq:eta_gen}
  \eta(x):=|a|(\beta(x)+d(x))+x_{\beta(x)}+x_{d(x)},
  \end{equation}
Consider two cases.

  \smallskip
  {\em Case 1}\/: $|x|=m$.
We use induction on $\eta$ to show that $f(x)$ is determined, via
TP4-relations, by the values of $f$ within $Sint(a;m)\cup Int(a;m)$.

If $x$ is a fint or a sint, we are done, so assume this is not the case.
Then all numbers $i:=\gamma(x)$, $j:=\beta(x)$, $k:=\alpha(x)$ and
$\ell:=d(x)$ are well defined, and we have $i<j<k<\ell$. Put
$x':=x-1_j-1_\ell$ and form five vectors $B:=x'+1_i+1_k$, $C:=x'+1_i+1_j$,
$D:=x'+1_k+1_\ell$, $E:=x'+1_i+1_\ell$ and $F:=x'+1_j+1_k$. From the
definitions in~\refeq{alphabeta} it follows that these vectors belong to
$B$ (and have size $m$). By relation~\refeq{GP4} (with $x'$ in place of
$x$), $f(x)$ is computed from the values of $f$ on $B,C,D,E,F$. Also one
can check that each of the latter vectors either is a fint or is a sint or
the value of $\eta$ on it is less than $\eta(x)$.

So we can apply induction on $\eta$ (the inductive process of computing
$f$ on the lowest layer $B_m(a)$ has as a base the family $Sint(a;m)\cup
Int(a;m)$).

  \smallskip
  {\em Case 2}\/: $|x|>m$.
We show that $f(x)$ is determined, via TP3-relations, by the values of
$f$ within $Sint(a;m)\cup Int(a;m)\cup\ldots\cup Int(a;|x|)$. If $x$ is a
fint, we are done, so assume this is not the case. Put $i:=\beta(x)$,
$j:=\alpha(x)$ and $k:=d(x)$; then $i<j<k$. Put $x':=x-1_i-1_k$.
By~\refeq{GP3} (with $x'$ in place of $x$), $f(x)$ is computed via the
values of $f$ on the vectors
  $$
  B:=x'+1_j,\quad C:=x'+1_i+1_j,\quad D:=x'+1_k,\quad
 E:=x'+1_j+1_k,\quad F:=x'+1_i
   $$
(each of which belongs to $B$, in view of~\refeq{alphabeta} and $|x|>m$).
One can check that for each of $B,C,D,E,F$ at least one of the following
is true: it is a fint; it belongs to the preceding layer; the value of
$\eta$ on it is less than $\eta(x)$. So we can apply induction on the
number of the layer and on $\eta$.
  \end{proof}


 \medskip
In the rest of this section we discuss an interrelation between TP3- and
TP4-conditions.

  \Xcomment{
(We will not use such an interrelation in what follows; nevertheless, this
deserves to be mentioned in our study of TP-functions by its own right).
Each TP3-condition involves elements of two neighboring layers, while each
TP4-relation acts within one layer, and in fact, their roles are not
equal: typically the former conditions imply validity of the latter ones,
as the following proposition shows.
  }

  \begin{prop} \label{pr:TP4}
Let $f$ be a function on $B=B_m^{m'}(a)$ and let $m<m'$. Suppose $f$
satisfies all TP3-conditions~\refeq{GP3} on $B$. Then $f$ is a
TP-function, i.e. $f$ satisfies the TP4-conditions~\refeq{GP4} as well.
  \end{prop}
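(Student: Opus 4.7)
The plan is to derive the TP4-identity at a given cortege $(x,i,j,k,\ell)$ by combining four carefully chosen TP3-relations. Set $p:=|x|+2$, write $B_{st}:=f(x+1_s+1_t)$ for $\{s,t\}\subset\{i,j,k,\ell\}$, and put $P_1:=B_{ij}+B_{k\ell}$, $P_2:=B_{ik}+B_{j\ell}$, $P_3:=B_{i\ell}+B_{jk}$; the target \refeq{GP4} reads $P_2=\max(P_1,P_3)$. Since $m<m'$ and $m\le p\le m'$, at least one of the neighboring layers $p\pm 1$ lies in $[m,m']$. I would first treat the case $p+1\le m'$; the opposite case $p=m'$ (which forces $p>m$) is handled either by a symmetric argument with TP3-bases in layer $p-2$, or by reduction to the first case via the complementary function $f^\ast(y):=f(a-y)$, which is again a TP-function.

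Assume $p+1\le m'$, and set $A_{rst}:=f(x+1_r+1_s+1_t)$ for each triple $\{r,s,t\}\subset\{i,j,k,\ell\}$; the four corresponding vectors lie in $B$. For each $s\in\{i,j,k,\ell\}$ I apply TP3 with base $y:=x+1_s$ and the complementary triple $\{i,j,k,\ell\}\setminus\{s\}$: the six argument vectors lie in layers $p$ and $p+1$, hence belong to $B$. This yields four relations $(T_i),(T_j),(T_k),(T_\ell)$. The useful structural feature is that $(T_i)$, $(T_\ell)$ and $(T_k)$ isolate $B_{ik}$, $B_{j\ell}$ and $B_{jk}$ respectively, each as a two-term tropical $\max$ divided through by a single $A$-value.

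The core is then a short tropical substitution. Adding the formulas for $B_{ik}$ and $B_{j\ell}$ coming from $(T_i)$ and $(T_\ell)$, and expanding via the distributive law $\max(a,b)+\max(c,d)=\max(a+c,a+d,b+c,b+d)$, presents $P_2$ as a four-term $\max$; the offsets $A_{ij\ell}$ and $A_{ik\ell}$ cancel in pairs, and one of the four terms collapses to exactly $P_1$. In parallel, substituting the formula for $B_{ik}$ (from $(T_i)$) into $(T_k)$ expresses $B_{jk}$ as a three-term $\max$, and adding $B_{i\ell}$ presents $P_3$ as a three-term $\max$ whose three summands coincide (after the same cancellations) with the three remaining terms in the expansion of $P_2$. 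Therefore $P_2=\max(P_1,P_3)$, which is precisely \refeq{GP4}.

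The only step requiring care is the bookkeeping: one must keep track of the $A$-coefficients during the expansion, verify that they cancel in the right way, and check the coincidence between the three terms produced by $P_3$ and the three non-$P_1$ terms of $P_2$. Beyond that, no genuine combinatorial difficulty arises—the proposition reduces to a tropical identity valid whenever the four TP3-relations are simultaneously applicable, and this is exactly what the hypothesis $m<m'$ guarantees.
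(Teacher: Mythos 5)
Your proof is correct and is essentially the paper's argument seen in a mirror: the paper establishes the identity at the layer $p=|x|+2$ from three TP3-relations tying that layer to the singleton layer $p-1$ (valid when $p>m$) and disposes of the remaining layer $p=m$ by complementation, whereas you use three TP3-relations tying layer $p$ to the triple layer $p+1$ (valid when $p<m'$) and dispose of $p=m'$ by the symmetric/complementary reduction; I checked your max-expansion and the cancellation of $A_{ij\ell},A_{ik\ell}$, and the three non-$P_1$ terms do coincide with the three-term expansion of $P_3$, so the computation is sound. One small wording point: at that stage $f^\ast(y)=f(a-y)$ should only be asserted to satisfy the TP3-conditions on the complementary truncated box (which is all the reduction needs), not to be ``again a TP-function,'' since the latter is exactly what is being proved.
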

  \begin{proof}
First we show validity of~\refeq{GP4} for a cortege $(x,i,j,k,\ell)$ with
$m<|x|+2\le m'$. We are going to deal with only vectors of the form
$x+1_{i'}$ or $x+1_{i'}+1_{j'}$, where $i',j'\in\{i,j,k,\ell\}$ ($i'\ne
j'$). For this reason and to simplify notation, one may assume, w.l.o.g.,
that $x={\bf 0}$ and $(i,j,k,\ell)=(1,2,3,4)$ (in which case we, in fact,
deal with the truncated Boolean cube $\{S\subset[4]\colon 1\le|S|\le
2\}$). So we have to prove that
  \begin{equation}  \label{eq:1324}
  f(13)+f(24)=\max\{f(12)+f(34),\; f(14)+f(23)\}
  \end{equation}
(where for brevity $qr$ stands for $1_q+1_r$).

We use the following three TP3-relations for $f$:
   \begin{gather}
  f(24)+f(3)=\max\{f(2)+f(34),\; f(4)+f(23)\};   \label{eq:243} \\
  f(13)+f(2)=\max\{f(1)+f(23),\; f(3)+f(12)\};   \label{eq:132} \\
  f(14)+f(2)=\max\{f(1)+f(24),\; f(4)+f(12)\}.   \label{eq:142}
   \end{gather}

Adding $f(12)$ to both sides of~\refeq{243} gives
  $$
  f(24)+f(3)+f(12)=\max\{f(2)+f(34)+f(12),\; f(4)+f(23)+f(12)\}.
  $$
If in each side of this relation we take the maximum of the expression
there and $f(1)+f(23)+f(24)$, then we obtain
  \begin{gather*}
  \max\{f(24)+f(3)+f(12),\; f(1)+f(23)+f(24)\} \\
  =\max\{f(2)+f(34)+f(12),\; f(4)+f(23)+f(12),\; f(1)+f(23)+f(24)\}.
   \end{gather*}
This can be re-written as
  \begin{gather*}
  \max\{f(3)+f(12),\; f(1)+f(23)\}+f(24) \\
 =\max\{f(2)+f(34)+f(12),\; \max\{f(4)+f(12),\; f(1)+f(24)\}+f(23)\}.
   \end{gather*}
The maximum in the left hand side is equal to $f(13)+f(2)$,
by~\refeq{132}, and the interior maximum in the right hand side is equal
to $f(14)+f(2)$, by~\refeq{142}. Therefore, we have
  \begin{gather*}
  f(13)+f(2)+f(24)
 =\max\{f(2)+f(34)+f(12),\; f(14)+f(2)+f(23)\} \\
   =\max\{f(34)+f(12),\; f(14)+f(23)\}+f(2).
   \end{gather*}
Canceling out $f(2)$ in the left and right sides, we obtain the required
equality~\refeq{1324}.

Next, let $|x|+2=m$. Take the complementary function $f^\ast(a-y):=f(y)$,
$y\in B_m^{m'}(a)$, and consider the vectors $\bar
x:=x+1_i+1_j+1_k+1_\ell$ and $\bar x^\ast:=a-\bar x$. Then $f^\ast$ is a
function on the truncated box $B^\ast:=B_{|a|-m'}^{|a|-m}(a)$ satisfying
the TP3-conditions there, and the vector $\bar x^\ast$ is nonnegative and
satisfies $\bar x^\ast+1_i+1_j+1_k+1_\ell\le a$ and
$|a|-m'<|x^\ast|+2=|a|-m$. So we have a situation as in the previous case
(w.r.t. $B^\ast$) and obtain that relation~\refeq{GP4} holds for the
cortege $(\bar x^\ast,i,j,k,\ell)$. This implies that~\refeq{GP4} holds
for $f$ and $(x,i,j,k,\ell)$.
  \end{proof}

Thus, in the definition of a TP-function, imposing the TP4-relations is
important only when we deal with a slice $B_m(a)$.

\section{Surjectivity in the Boolean case} \label{sec:boolean}

In this section we prove the Boolean version of Theorem~A.

To distinguish between the Boolean and general cases, we modify notation
as follows. Let $0\le m\le m'\le n$. The
parameter $n$ will be fixed throughout and we will usually omit it in
notation for basic objects. We denote by $C_m^{m'}$ the truncated Boolean
cube $\{S\subseteq [n]\colon m\le |S|\le m'\}$, and by $C_p$ the
hyper-simplex consisting of the subsets $S$ of size $|S|=p$ (or the $p$th
layer of $C_m^{m'}$ when $m\le p\le m'$).

Any set $S\subseteq [n]$ is uniquely represented as the union of intervals
$I_1=[c_1..d_1],\ldots,I_r=[c_r..d_r]$ such that $c_j>d_{j-1}+1$ for
$j=2,\ldots,r$; such a representation is denoted as
  $$
  S=I_1\sqcup\ldots \sqcup I_r.
  $$
Recall that a sesquialteral interval, or a {\em sesqui-interval}, is a set
of the form $[1..d_1]\sqcup[c_2..d_2]$ with $d_1+1<c_2$.

We denote the set of $p$-element intervals in $[n]$ by $\Iscr_p$, and the
set of $p$-element sesqui-intervals by $\Sscr_p$. Then the Boolean version
of Theorem~A is the following

 \medskip
 \noindent
{\bf Theorem~A${\bf '}$} {\em Let
$\Bscr:=\Sscr_m\cup\Iscr_m\cup\Iscr_{m+1}\cup\ldots\cup\Iscr_{m'}$ and let
$\rho:\Tscr(C_m^{m'})\to \Rset^\Bscr$ be the restriction map. Then $\rho$
is a bijection, i.e., $\Bscr$ forms a TP-basis for the truncated cube
$C_m^{m'}$.
 }

 \medskip
 \noindent
(Note that $\Sscr_m$ vanishes if $m<2$. Also when $m=0$, we default assume
that $f(\emptyset)=0$ for any $f\in \Tscr(C_0^{m'})$).

It suffices to prove that $\rho$ in this theorem is surjective, as its
injectivity has been shown in the previous section. The proof consists of
several steps and is given throughout the subsections below. It is based
on a method of generating any TP-function on $C_m^{m'}$ by use of a
certain flow model, which we call the {\em flow-in-matrix method}. This
method has as a source a construction of examples of tropical Pl\"ucker
functions in~\cite{BFZ}.

\subsection{Grids, matrices and flows} \label{ssec:grid}

By the {\em grid} of size $n\times m'$ we mean the following directed
graph $\Gamma=\Gamma_{n,m'}$. The vertex set $V_{n,m'}$ of $\Gamma$
consists of elements $v_{pq}$ for $p=1,\ldots,n$ and $q=1,\ldots,m'$. The
edge set $E_{n,m'}$ of $\Gamma$ consists of the pairs $(v_{pq},v_{p'q'})$
such that either $p'=p-1$ and $q'=q$, or $p'=p$ and $q'=q+1$. We visualize
the grid by identifying a vertex $v_{pq}$ with the point $(p,q)$ in the
plane. Then the vertices $v_{11},\ldots,v_{1,n}$ are located in the
bottommost horizontal line of $\Gamma$; we call them the {\em sources} and
denote by $s_1,\ldots,s_{n}$, respectively. The vertices
$v_{11},\ldots,v_{m',1}$, located in the leftmost vertical line of
$\Gamma$, are called the {\em sinks} and denoted by $t_1,\ldots,t_{m'}$,
respectively. The grid $\Gamma_{5,3}$ is illustrated in the picture.

   \begin{center}
  \unitlength=1mm
    \begin{picture}(55,30)
 \multiput(10,5)(0,10){3}%
   {\multiput(0,0)(10,0){5}%
     {\put(0,0){\circle*{1}}}
        }
 \multiput(10,5)(0,10){3}%
   {\multiput(0,0)(10,0){4}%
    {\put(10,0){\vector(-1,0){9.5}}}
        }
 \multiput(10,5)(0,10){2}%
   {\multiput(0,0)(10,0){5}%
    {\put(0,0){\vector(0,1){9.5}}}
        }
 \put(0,1){$t_1=s_1$}
 \put(19,1){$s_2$}
 \put(29,1){$s_3$}
 \put(39,1){$s_4$}
 \put(49,1){$s_5$}
 \put(5,14){$t_2$}
 \put(5,24){$t_3$}
  \end{picture}
 \end{center}

By a {\em flow} we mean a collection $\Fscr$ of paths in $\Gamma$, each
path beginning at a source and ending at a sink. We say that $\Fscr$ is
{\em admissible} if:

(i) the paths in $\Fscr$ are pairwise (vertex) disjoint; and

(ii) the sinks occurring in $\Fscr$ are $t_1,\ldots,t_{|\Fscr|}$.

Consider a weighting $w:V_{n,m'}\to\Rset$ on the vertices.  The weight
$w(P)$ of a path $P$ is defined to be the sum of weights $w(v)$ of the
vertices $v$ of $P$, and the weight $w(\Fscr)$ of a flow $\Fscr$ is
$\sum(w(P)\colon P\in\Fscr)$. For a set $S\subseteq [n]$ with $|S|\le m'$,
define
   \begin{equation} \label{eq:f-w}
  f_w(S):=\max\{w(\Fscr)\},
   \end{equation}
where the maximum is taken over all admissible flows $\Fscr$ in
$\Gamma_{n,m'}$ beginning at the set $\{s_p\colon p\in S\}$.

In what follows we will identify the weighting $w$ with the $n\times m'$
matrix $W=(w_{pq})$, where $w_{pq}=w(v_{pq})$. To be consistent with the
visualization of the grid, we should think of $n$ as the number of columns
of $W$, use the first index just for the columns, and assume that $w_{11}$
is the leftmost and bottommost element of the matrix.

The following assertion plays the key role in our proof.

 \begin{theorem} \label{tm:flow}
Let $W=(w_{pq})$ be a real $n\times m'$ matrix. Then the function $f_w$
defined by~\refeq{f-w} on the sets $S\in C_m^{m'}$ is a TP-function.
  \end{theorem}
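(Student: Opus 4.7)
The plan is to prove Theorem~3.2 by a tropical analogue of the Lindstr\"om--Gessel--Viennot lemma, exploiting the fact that $\Gamma_{n,m'}$ is a planar DAG in which admissible flows are non-crossing path systems ending at a prefix $t_1,\ldots,t_{|\Fscr|}$ of sinks. First I would reduce to checking only the TP3-relations: by Proposition~\ref{pr:TP4}, if $m<m'$ then the TP3-conditions on $C_m^{m'}$ already imply the TP4-conditions, so only TP3 needs to be verified. In the slice case $m=m'$ (where TP3 is vacuous) I would handle TP4 directly by an analogous four-path exchange argument.

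For a fixed TP3-cortege $(X,i,j,k)$ with $i<j<k$, the key observation is that the three source multisets arising on the two sides of~\refeq{P3} coincide: each equals $X\sqcup X\sqcup\{i,j,k\}$ (every element of $X$ counted twice, while $i,j,k$ appear once each), and likewise the three sink multisets coincide: $t_1,\ldots,t_{|X|+1}$ counted twice and $t_{|X|+2}$ once. Thus each term $f_w(A)+f_w(B)$ in the TP3-relation equals the maximum, over decompositions of a fixed boundary multi-flow into two admissible flows with the prescribed source split $(A,B)$, of the total vertex-weight. The TP3-relation then reduces to a purely combinatorial flow-exchange statement.

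The core step is a \emph{flow-exchange lemma}: given an admissible pair of flows realizing one of the three splits $(Xik\mid Xj)$, $(Xij\mid Xk)$, $(Xi\mid Xjk)$, it can be rearranged into admissible pairs for each of the other two splits, preserving the total multiset of vertices traversed (and hence the weight) for at least one of them. To prove it, I would isolate the three ``new'' paths $P_i, P_j, P_k$ starting at $s_i,s_j,s_k$; the remaining paths come in pairs from sources in $X$ with matching sinks and can be kept untouched. Since every directed path in $\Gamma_{n,m'}$ travels monotonically up-and-left, any two paths from distinct sources are either vertex-disjoint or share a maximal contiguous arc (they merge and then split). Depending on whether $P_j$ first shares a vertex with $P_i$ or with $P_k$ (traced forward from $s_j$), one swaps tails at the first such shared vertex; together with the preserved $X$-paths, the resulting collections form admissible flows for whichever of $(Xij\mid Xk)$ or $(Xi\mid Xjk)$ is compatible with the swap. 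Planarity of the grid and up-left monotonicity ensure the rearranged collections remain pairwise vertex-disjoint and still end at the required prefix of sinks, so $\ge$ holds in both directions and TP3 follows.

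The hard part will be the bookkeeping in the flow-exchange: tracking which path-fragment lands in which new flow, and verifying admissibility (vertex-disjointness together with the consecutive-sink condition) of the rearranged pair. This is a delicate but standard uncrossing argument, best handled by induction on the number of shared vertices among $P_i,P_j,P_k$, and is the direct tropical-planar-DAG analogue of the bijective proofs of the classical Pl\"ucker identities via the LGV lemma.
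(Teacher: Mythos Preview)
Your reduction via Proposition~\ref{pr:TP4} is fine, and the overall LGV-style strategy is the right spirit, but the flow-exchange step has a genuine gap.

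The claim that ``the remaining paths come in pairs from sources in $X$ with matching sinks and can be kept untouched'' is false in general. In an admissible flow on $\Gamma_{n,m'}$ the non-crossing condition forces the path from the $q$-th smallest source to end at $t_q$; hence the sink hit by the path from a fixed $p\in X$ is $t_{\mathrm{rank}(p;S)}$, and this rank depends on the ambient source set $S$. For instance, with $X=\{3\}$, $i=1$, $j=2$, $k=4$, in the flow for $Xik=\{1,3,4\}$ the path from $s_3$ ends at $t_2$, and in the flow for $Xj=\{2,3\}$ it also ends at $t_2$; but in a flow for $Xij=\{1,2,3\}$ the path from $s_3$ must end at $t_3$. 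Neither of the two original $s_3$-paths does this, so you cannot simply keep them and only juggle $P_i,P_j,P_k$. More generally, after any tail swap among $P_i,P_j,P_k$, the resulting path from $s_j$ may intersect the $X$-paths of the \emph{other} flow (with which it was never required to be disjoint), so ``planarity and up-left monotonicity'' do not by themselves give vertex-disjointness of the rearranged pair.

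This is exactly why the paper's proof does not attempt a local three-path swap. It treats the two inequalities separately and globally: for $\ge$ (Lemmas~\ref{lm:geP3}--\ref{lm:geP4}) it merges \emph{all} $2|X|+3$ paths, re-decomposes them into a single nested family $\Qscr$ ordered by lower regions, and splits $\Qscr$ by parity of index; for $\le$ (Section~\ref{ssec:final}) it builds the symmetric-difference auxiliary graph $H$ on a vertex-split version of the grid and reroutes along one of its two source-to-source paths. Both devices are designed precisely to handle the interaction of $P_j$ (and the extra sink $t_{|X|+2}$) with the $X$-paths of the opposite flow. Your induction on the number of shared vertices among $P_i,P_j,P_k$ would need to be replaced by an argument that tracks all $2|X|+3$ paths simultaneously; once you do that, you are essentially reconstructing the paper's proof.
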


We denote the map $W\mapsto f_w$ in this proposition by $\phi$. It should
be noted that $\phi$ is not injective in general, that is, one and the
same function $f$ may be derived from several matrices. In order to get a
one-to-one correspondence, we will restrict the set of matrices by taking
as $W$ a matrix that is the sum of two $n\times m'$ matrices $W',W''$,
where
  \begin{numitem}
$W'$ is such that $w'_{pq}=0$ for all $p,q$ with $p<\max\{q,m+1\}$;
  \label{eq:Wp}
  \end{numitem}
  \begin{numitem}
$W''$ is such that:
  \begin{itemize}
 \vspace{-5pt}
 \item[(i)] $w''_{pq}=M$ for $p=2,\ldots,m$ and $q<p$;
 \item[(ii)] $w''_{11}=-\frac{m(m-1)}{2}M$;
 \item[(iii)] $w''_{pq}=0$ otherwise.
   \end{itemize}
   \label{eq:Wpp}
   \end{numitem}
Here $M$ is a sufficiently large positive number w.r.t. the entries of
$W'$ (one can take $M:=nm\max\{|w'_{pq}|\}$). The purpose of adding the
matrix $W''$ to $W'$ will be clear later. The behavior of $W',W''$ is
illustrated in the picture.

   \begin{center}
  \unitlength=1mm
    \begin{picture}(140,45)(0,5)
       \begin{picture}(60,50)(-5,0)
   \put(10,10){\line(1,0){50}}
   \put(10,30){\line(1,0){20}}
   \put(10,46){\line(1,0){50}}
   \put(30,34){\line(1,0){4}}
   \put(34,38){\line(1,0){4}}
   \put(38,42){\line(1,0){4}}
   \put(10,10){\line(0,1){36}}
   \put(30,10){\line(0,1){24}}
   \put(34,34){\line(0,1){4}}
   \put(38,38){\line(0,1){4}}
   \put(42,42){\line(0,1){4}}
   \put(60,10){\line(0,1){36}}
   \put(-5,20){$W':$}
   \put(11,6){$1$}
   \put(26,7){$m$}
   \put(57,7){$n$}
   \put(7,11){$1$}
   \put(6,27){$m$}
   \put(5,42){$m'$}
   \put(19,19){$0$}
   \put(20,37){$0$}
   \put(42,26){$\lesseqqgtr 0$}
         \end{picture}
   \begin{picture}(60,50)(-20,0)
   \put(10,10){\line(1,0){50}}
   \put(10,30){\line(1,0){20}}
   \put(10,46){\line(1,0){50}}
   \put(10,14){\line(1,0){8}}
   \put(18,18){\line(1,0){4}}
   \put(22,22){\line(1,0){4}}
   \put(26,26){\line(1,0){4}}
   \put(10,10){\line(0,1){36}}
   \put(30,10){\line(0,1){20}}
   \put(14,10){\line(0,1){4}}
   \put(18,14){\line(0,1){4}}
   \put(22,18){\line(0,1){4}}
   \put(26,22){\line(0,1){4}}
   \put(60,10){\line(0,1){36}}
   \put(-5,20){$W'':$}
   \put(11,6){$1$}
   \put(26,7){$m$}
   \put(57,7){$n$}
   \put(6,27){$m$}
   \put(5,42){$m'$}
   \put(15,23){$0$}
   \put(40,30){$0$}
   \put(23,13){$M$}
   \put(-13,10){$-\frac{m(m-1)}{2}M$}
   \put(7,12){\vector(1,0){5}}

 \end{picture}

  \end{picture}
 \end{center}

(Note that when $m<2$, $W''$ becomes the zero matrix and $W=W'$. When
$m=0$ and $m'=n$, the essential part of $W'$ is a triangular matrix. When
$m=m'$, the essential part of $W'$ is an $(n-m)\times n$ matrix.) We
denote the sets of matrices $W$ and $W'$ as above by $\Wscr=\Wscr(m,m')$
and $W'=\Wscr'(m,m')$, respectively ($W$ is considered up to the choice of
$M$).

We say that a function $f$ on $C_m^{m'}$ or on $\Bscr$ is {\em normalized}
if $f([1..m])=0$. The set of normalized functions on $C_m^{m'}$ is denoted
by $\Tscr^0(C_m^{m'})$, and we denote by $\Bscr^0$ the set $\Bscr$ from
which the interval $\{[1..m]\}$ is removed. (Any TP-function can be
considered up to adding a constant; so in Theorem~A$'$ one can consider
only normalized TP-functions and their restrictions to $\Bscr$.) The
importance of~\refeq{Wp},\refeq{Wpp} is emphasized by the following
  \begin{prop} \label{pr:matr}
For each normalized function $g:\Bscr\to\Rset$, there exists a matrix
$W'\in\Wscr'(m,m')$ such that $g(S)=f_w(S)$ holds for all $S\in\Bscr$,
where the weighting $w$ corresponds to $W=W'+W''$. Moreover, $W'$ is
unique and the correspondence of $g$ and $W'$ gives a bijection between
$\Rset^{\Bscr^0}$ and $\Wscr'(m,m')$, or between $\Rset^{\Bscr^0}$ and
$\Wscr(m,m')$ (considering the matrices in $\Wscr(m,m')$ up to the choice
of $M$)).
  \end{prop}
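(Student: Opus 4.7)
The approach is to exhibit the map $W' \mapsto (f_w(S))_{S \in \Bscr^0}$ as a linear bijection $\Wscr'(m,m') \to \Rset^{\Bscr^0}$. For $M$ sufficiently large, every optimal admissible flow on any $S$ with $|S| \geq m$ must cover each position where $w''_{p,q}=M$: missing such a position costs at least one $M$ in weight, which outweighs any possible $W'$-gain. This forcing makes $f_w(S)$ genuinely linear (not merely piecewise-linear) in $W'$ for $S \in \Bscr$, so it suffices to prove the resulting linear map is invertible.

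For intervals $[c..d] \in \Bscr$, set $s := d - c + 1 \geq m$. Disjointness of paths from $\{s_c,\ldots,s_d\}$ to $\{t_1,\ldots,t_s\}$ forces uniqueness of the admissible flow: path $1$ is pinned to row $1$ (any departure upward prevents return to $v_{1,1}$), and path $i$ is then pinned inductively into the canonical staircase rising from $(c{+}i{-}1,1)$ to $(c{+}i{-}1,i)$ and running left to $(1,i)$. The union of the paths is exactly the rectangle $[1..d]\times[1..s]$; since $s \geq m$, this rectangle contains every $M$-position of $W''$ together with $v_{1,1}$, and these cancel by the choice of $w''_{1,1}$. Hence
\[
 f_w([c..d]) \;=\; \sum_{p=m+1}^{d}\sum_{q=1}^{\min(s,p)} w'_{p,q}.
\]
For sesqui-intervals $S = [1..d_1]\sqcup[c_2..d_2] \in \Sscr_m$, global uniqueness fails, but the first $d_1$ paths are still forced into the canonical staircase for $[1..d_1]$. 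I would then construct explicitly an admissible flow whose remaining $k := m - d_1$ paths climb row by row from the right, covering every $M$-position not already absorbed below; and I would show that \emph{any} optimal flow must cover all $M$-positions (otherwise it loses an $M$) and that the blocking imposed by the canonical staircase from below and by the sinks from above forces the set of \emph{free} vertices such a flow visits to be exactly the rectangle $\{m{+}1,\ldots,d_2\}\times\{1,\ldots,k\}$. Consequently
\[
  f_w(S) \;=\; \sum_{p=m+1}^{d_2}\sum_{q=1}^{k} w'_{p,q},
\]
independently of $W'$.

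With the formulas in hand, I would define the bijection $\Bscr^0 \to \{(p,q): w'_{p,q} \text{ is free}\}$ by $[c..d]\mapsto (d,\, d-c+1)$ and $[1..d_1]\sqcup[c_2..d_2]\mapsto (d_2,\, m-d_1)$, verify that both sides have the same cardinality by an elementary count, and observe that under this correspondence each $f_w(S)$ is the sum of $w'_{p,q}$ over free entries with $p \leq p_S$ and $q \leq q_S$. Ordering $\Bscr^0$ lexicographically by $(p_S, q_S)$ turns the system unitriangular, so each $w'_{p_S, q_S}$ is recovered from $g$ by an inclusion-exclusion of at most four values of $g$ on $\Bscr$; this simultaneously yields existence and uniqueness of $W'$.

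The main obstacle is the sesqui-interval analysis: both constructing an admissible flow that absorbs every $M$-position of $W''$, and, more subtly, showing that the set of free vertices visited by \emph{any} optimal such flow coincides with the advertised rectangle (the flow itself need not be unique, but alternative optimal choices can differ only at vertices where $w'=0$). Every other ingredient --- uniqueness of the interval flow, the cardinality match of the bijection, and the triangular inversion --- reduces to routine bookkeeping once this combinatorial structure is pinned down.
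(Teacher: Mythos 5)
Your plan follows the paper's proof essentially step for step: your correspondence $[c..d]\mapsto(d,\,d-c+1)$ and $[1..d_1]\sqcup[c_2..d_2]\mapsto(d_2,\,m-d_1)$ is exactly the bijection $\pi$ of \refeq{corresp}, your unitriangular inversion is the explicit second-difference formula \refeq{g-Wp} (equivalently, the defining conditions \refeq{for_int}--\refeq{for_sint} read in the other direction), your interval computation (uniqueness of the staircase flow, cancellation of the $M$-entries against $w''_{11}$ via \refeq{zerosum}) is the paper's, and the large-$M$ forcing is precisely the intended role of $W''$. The one step you leave as an announced obstacle --- the sesqui-interval analysis --- is exactly where the paper's remaining work lies, and both claims you flag are true and provable along the lines you indicate. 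The paper constructs the \emph{lowest} flow explicitly: the first $d_1$ paths are the forced staircases, and for $d_1<i\le m$ the path from $s_{c_2+i'-1}$ (with $i'=i-d_1$) runs left along row $i'$ after climbing column $c_2+i'-1$, reaches column $d_1+i'$, climbs to row $i$, and proceeds to $t_i$; its vertex set is the square $[1..m]\times[1..m]$ together with the rectangle $[m+1..d_2]\times[1..d_2-c_2+1]$, so its weight is $w'$ of that rectangle. Optimality is then settled by the slightly stronger observation that \emph{any other} admissible flow for the sint fails to cover some $v_{pq}$ with $q<p\le m$ and hence loses at least $M$; this makes the lowest flow the unique optimum and in particular gives your statement that the free vertices of every optimal flow form the advertised rectangle. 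That statement (or the paper's uniqueness version) really is needed: merely knowing that optimal flows cover all $M$-positions does not fix their $W'$-weight, so the subtlety you single out is genuine, and carrying out the blocking argument just described (or your variant of it) is all that separates your proposal from a complete proof.
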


We denote the map $g\mapsto W$ in this proposition by $\mu$.

Summing up Theorem~\ref{tm:flow} and Proposition~\ref{pr:matr}, we can
conclude that the restriction map $\rho$ is surjective. Indeed, for each
normalized function $g$ on $\Bscr$, take the matrix $W=\mu(g)$ and form
the function $f=f_w$ ($=\phi(W)$) on $C_m^{m'}$. Then $f$ is a TP-function
whose restriction to $\Bscr$ is just $g$. In fact, we have three
bijections.

 \begin{corollary} \label{cor:3biject}
The maps $\rho,\mu,\phi$ determine bijections between $\Tscr^0(C_m^{m'})$
and $\Rset^{\Bscr^0}$, between $\Rset^{\Bscr^0}$ and $\Wscr(m,m')$, and
between $\Wscr(m,m')$ and $\Tscr^0(C_m^{m'})$, respectively. Their
composition $\phi\circ \mu\circ \rho$ is identical on $\Tscr^0(C_m^{m'})$.
(See the picture.)
  \end{corollary}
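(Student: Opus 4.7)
The statement compiles earlier results into a single diagram of three bijections. My plan is to combine Proposition~\ref{pr:inj_gen} (injectivity of $\rho$), Theorem~\ref{tm:flow} (the output of $\phi$ is a TP-function), and Proposition~\ref{pr:matr} (the bijection $\mu$ together with the identity $\rho\circ\phi\circ\mu=\mathrm{id}$ on normalized functions) in the obvious chain, with one small verification about normalization on the side.

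First I would invoke Proposition~\ref{pr:matr}: $\mu$ is a bijection $\Rset^{\Bscr^0}\to\Wscr(m,m')$, and for each normalized $g\in\Rset^{\Bscr^0}$ the TP-function $\phi(\mu(g))$ restricts to $g$ on $\Bscr$. Since $[1..m]\in\Iscr_m\subseteq\Bscr$, evaluating this restriction identity at $S=[1..m]$ gives $\phi(\mu(g))([1..m])=g([1..m])=0$, and because $\mu$ is already surjective this shows that $\phi$ lands in $\Tscr^0(C_m^{m'})$ rather than merely in $\Tscr(C_m^{m'})$. As a cross-check, the normalization can also be read directly from the design of $W''$ in~\refeq{Wpp}: the unique admissible flow on $[1..m]$ traces the staircase around the $m\times m$ lower-left square, picking up $W'$-weight $0$ by~\refeq{Wp} (since $p\le m$ there) and $W''$-weight $-\tfrac{m(m-1)}{2}M+\sum_{p=2}^{m}(p-1)M=0$, with the large constant $M$ guaranteeing that this is the maximum over admissible flows.

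Next I would read off the three bijections. The identity $\rho\circ\phi\circ\mu=\mathrm{id}_{\Rset^{\Bscr^0}}$ from Proposition~\ref{pr:matr} makes $\rho$ surjective, and combined with Proposition~\ref{pr:inj_gen} this promotes $\rho$ to a bijection $\Tscr^0(C_m^{m'})\to\Rset^{\Bscr^0}$. Since $\mu$ is a bijection and $\phi\circ\mu=\rho^{-1}$, composing on the right with $\mu^{-1}$ yields $\phi$ as a bijection $\Wscr(m,m')\to\Tscr^0(C_m^{m'})$. Finally, $\phi\circ\mu\circ\rho=\rho^{-1}\circ\rho=\mathrm{id}_{\Tscr^0(C_m^{m'})}$, as claimed.

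The main obstacle is not in this corollary at all; it lies in the two ingredients fed into it, namely the construction of the flow model and the verification of the TP-relations for $f_w$ (Theorem~\ref{tm:flow}), and the explicit inversion of the restriction map by choosing entries of $W'$ to realize any prescribed values on $\Bscr$ (Proposition~\ref{pr:matr}). Once those are established, the corollary itself is a purely diagrammatic exercise.
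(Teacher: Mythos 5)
Your proposal is correct and follows essentially the same route as the paper: the paper likewise just combines Theorem~\ref{tm:flow} and Proposition~\ref{pr:matr} (to get $\rho\circ\phi\circ\mu=\mathrm{id}$ on $\Rset^{\Bscr^0}$, hence surjectivity of $\rho$) with the injectivity from Proposition~\ref{pr:inj_gen}, and reads off the three bijections. Your added normalization check ($f_w([1..m])=0$) is already implicit in Proposition~\ref{pr:matr} since $[1..m]\in\Bscr$, and your flow computation of it is fine (though $M$ is not needed there, as the admissible flow for $[1..m]$ is unique).
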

   \begin{center}
  \unitlength=1mm
    \begin{picture}(40,20)(0,5)
 \put(5,5){$\Rset^{\Bscr^0}$}
 \put(25,5){$\Wscr$}
 \put(5,20){$\Tscr^0$}
 \put(25,20){$\Tscr^0$}
 \put(16,3){$\mu$}
 \put(2,13){$\rho$}
 \put(28,13){$\phi$}
 \put(15,22){{\rm id}}
 \put(13,6){\vector(1,0){10}}
 \put(5,18){\vector(0,-1){8}}
 \put(27,10){\vector(0,1){8}}
 \put(12,21){\vector(1,0){10}}
  \end{picture}
 \end{center}

Thus, it remains to prove Proposition~\ref{pr:matr} and
Theorem~\ref{tm:flow}.

\subsection{From functions on $\Bscr$ to matrices}
\label{ssec:f-matr}

In this subsection we prove Proposition~\ref{pr:matr}.

For an $n\times m'$ matrix $\tilde W$ and subsets $I\subseteq [n]$ and
$J\subseteq [m']$, let $\tilde w(I\times J)$ denote the weight
$\sum(\tilde w_{pq}\colon p\in I,\ q\in J)$ of the
$I\times J$ submatrix of $\tilde W$.

Let $g$ be a normalized function on $\Bscr$. The desired matrix $W'$ for
$g$ is assigned so as to satisfy the following conditions:
  \begin{equation} \label{eq:for_int}
 g([c..d])=w'([d]\times[d-c+1]) \quad\mbox{for each interval
    $[c..d]\in\Bscr$};
   \end{equation}
  \begin{equation} \label{eq:for_sint}
  g(S)=w'([d_2]\times[d_2-c_2+1]) \quad\mbox{for each sint
    $S=[1..d_1]\sqcup[c_2..d_2]\in\Bscr$}.
   \end{equation}

Subject to~\refeq{Wp}, these conditions determine $W'$ uniquely. To see
this, let $\Pi:=\{(p,q)\colon p=m+1,\ldots,n,\; q=1,\ldots,\min\{p,m'\}\}$
(the set of essential index pairs in~\refeq{Wp}). There is a natural
bijection $\pi:\Pi\to\Bscr^0$, namely:
  \begin{numitem}
  for $(p,q)\in\Pi$,
  \begin{itemize}
 \vspace{-5pt}
 \item[(i)]
  $\pi(p,q)$ is the interval $[p-q+1..p]$ (of size $q$) if $q\ge m$;
 \item[(ii)]
  $\pi(p,q)$ is the sint $[1..m-q]\sqcup[p-q+1..p]$
(of size $m$) if $q<m$.
  \end{itemize}
  \label{eq:corresp}
  \end{numitem}
Now using~\refeq{for_int}--\refeq{for_sint}, one can compute the weights
$w'_{pq}$ for all $(p,q)\in\Pi$, step-by-step by increasing $p,q$; they
are expressed as
  \begin{eqnarray}
 w'_{m+1,1}&=& g(\pi(m+1,1)); \label{eq:g-Wp} \\
 w'_{p,1}&=&g(\pi(p,1))-g(\pi(p-1,1))
     \qquad \mbox{if $p>m+1$;} \nonumber \\
  w'_{pq} &=&g(\pi(p,q))-g(\pi(p,q-1))\qquad
   \mbox{if $p=\max\{q,m+1\}$ and $q>1$;} \nonumber \\
  w'_{pq} &=&g(\pi(p,q))+g(\pi(p-1,q-1))-g(\pi(p-1,q))
                          -g(\pi(p,q-1)) \nonumber \\
   &\phantom{=}& \qquad\qquad \mbox{otherwise.} \nonumber
  \end{eqnarray}
(This will also be used in Section~\ref{sec:Laurent}.)
Thus,~\refeq{for_int}--\refeq{for_sint} (or~\refeq{g-Wp}) gives a
bijection between $\Wscr'(m,m')$ and $\Rset^{\Bscr^0}$.

We assert that the matrix $W=W'+W''$ is as required in
Proposition~\ref{pr:matr} for the given $g$, i.e., $g(S)=f_w(S)$ holds for
all intervals and sesqui-intervals $S\in \Bscr$.

To show this, first of all observe that the matrix $W''$ in~\refeq{Wpp}
satisfies
   \begin{equation} \label{eq:zerosum}
   w''([p]\times[q])=0 \qquad\mbox{if $p,q\ge m$}.
   \end{equation}

Consider an interval $I=[c..d]\in\Bscr$. In the grid $\Gamma=\Gamma_{n,m'}$
there is a unique admissible flow $\Fscr$ having the source set
$\{s_p\colon p\in[c..d]\}$. This flow consists of the paths
$P_1,\ldots,P_{d-c+1}$, where each $P_i$ begins at the source $s_{\bar i}$
for $\bar i:=c+i-1$, ends at the sink $t_i$, and is of the form
   $$
   P_i=(s_{\bar i}=v_{\bar i,1},v_{\bar i,2},\ldots,v_{\bar i,i},v_{\bar
   i-1,i},\ldots, v_{1,i}=t_i).
   $$
(Hereinafter we use notation for a path without indicating its edges.) The
picture below illustrates $\Fscr$ in the case $c=4$ and $d=6$.

   \begin{center}
  \unitlength=1mm
    \begin{picture}(65,40)
 \multiput(10,5)(0,10){4}%
   {\multiput(0,0)(10,0){7}%
     {\put(0,0){\circle*{1}}}
        }
 \multiput(10,5)(10,0){3}%
    {\put(10,0){\vector(-1,0){9.5}}}
 \multiput(10,15)(10,0){4}%
    {\put(10,0){\vector(-1,0){9.5}}}
 \multiput(10,25)(10,0){5}%
    {\put(10,0){\vector(-1,0){9.5}}}
\multiput(50,5)(0,10){1}%
    {\put(0,0){\vector(0,1){9.5}}}
\multiput(60,5)(0,10){2}%
    {\put(0,0){\vector(0,1){9.5}}}
 \put(39,1){$s_4$}
 \put(49,1){$s_5$}
 \put(59,1){$s_6$}
 \put(5,4){$t_1$}
 \put(5,14){$t_2$}
 \put(5,24){$t_3$}
  \end{picture}
 \end{center}

So the function $f$ generated by
$W$ via the flow model satisfies
   $$
 f(I)=w([d]\times[d-c+1])=w'([d]\times[d-c+1])=g(I)
  $$
(in view of~\refeq{zerosum} and $|I|\ge m$).

Next consider a sesqui-interval $S=[1..d_1]\sqcup[c_2..d_2]$ in
$\Bscr$. We associate to a path $P$ from a source $s_i$ to a sink $t_j$ in
$\Gamma$ the closed region of the plane bounded by $P$, by the horizontal
path from $s_i$ to $v_{11}$ and by the vertical path from $v_{11}$ to
$t_j$; we call it the {\em lower region} of $P$ and denote by $\Rscr(P)$.
(Regions of this sort will be used in the next subsection as well.)

In contrast to intervals, the sesqui-interval $S$ admits several
admissible flows with the source set $\{s_p\colon p\in S\}$. We
distinguish one flow $\Fscr$ among these; it is called the {\em lowest}
flow for $S$ and consists of the paths $P_1,\ldots,P_m$ whose lower
regions are as small as possible.

These paths are constructed as follows. For $i=1,\ldots,d_1$, the path
$P_i$ (going from $s_i$ to $t_i$) can be chosen uniquely, namely, $P_i$ is
$(v_{i,1},\ldots,v_{i,i},\ldots,v_{1,i})$. Now let $d_1<i\le m$. Put
$i':=i-d_1$ and $\bar i:=c_2+i'-1$; then $\bar i\in[c_2..d_2]$. One can
see that the path $P_i$, that goes from $s_{\bar i}$ to $t_i$ and has the
minimal lower region (provided that $P_1,\ldots,P_{i-1}$ are already
constructed), is of the form
  $$
  P_i=(v_{\bar i,1},\ldots,v_{\bar i,i'},\ldots,v_{d_1+i',i'},\ldots,
  v_{d_1+i',i},\ldots,v_{1,i})
  $$
(as a rule, $P_i$ makes three turns). An instance of a lowest flow is
illustrated in the picture; here $m=5$, $d_1=2$, $c_2=6$ and $d_2=8$.

   \begin{center}
  \unitlength=1mm
    \begin{picture}(95,60)
 \multiput(10,5)(0,10){6}%
   {\multiput(0,0)(10,0){9}%
     {\put(0,0){\circle*{1}}}
        }
   \put(20,15){\vector(-1,0){9.5}}
   \put(20,5){\vector(0,1){9.5}}
 \multiput(30,5)(10,0){3}%
    {\put(10,0){\vector(-1,0){9.5}}}
 \multiput(40,15)(10,0){3}%
    {\put(10,0){\vector(-1,0){9.5}}}
 \multiput(50,25)(10,0){3}%
    {\put(10,0){\vector(-1,0){9.5}}}
 \multiput(10,35)(10,0){3}%
    {\put(10,0){\vector(-1,0){9.5}}}
 \multiput(10,25)(10,0){2}%
    {\put(10,0){\vector(-1,0){9.5}}}
 \multiput(10,45)(10,0){4}%
    {\put(10,0){\vector(-1,0){9.5}}}
\multiput(30,5)(0,10){2}%
    {\put(0,0){\vector(0,1){9.5}}}
\multiput(40,15)(0,10){2}%
    {\put(0,0){\vector(0,1){9.5}}}
\multiput(50,25)(0,10){2}%
    {\put(0,0){\vector(0,1){9.5}}}
\multiput(70,5)(0,10){1}%
    {\put(0,0){\vector(0,1){9.5}}}
\multiput(80,5)(0,10){2}%
    {\put(0,0){\vector(0,1){9.5}}}
 \put(9,1){$s_1$}
 \put(19,1){$s_2$}
 \put(59,1){$s_6$}
 \put(69,1){$s_7$}
 \put(79,1){$s_8$}
 \put(5,4){$t_1$}
 \put(5,14){$t_2$}
 \put(5,24){$t_3$}
 \put(5,34){$t_4$}
 \put(5,44){$t_5$}
  \end{picture}
 \end{center}

Observe that the vertex set of the lowest flow $\Fscr$ for the given $S$
spans the region being the union of the square $[1..m]\times[1..m]$ and
the rectangle $[m+1..d_2]\times[1..d_2-c_2+1]$. So, in view
of~\refeq{zerosum}, the weight $w(\Fscr)$ of $\Fscr$ amounts to
$w'([m+1..d_2]\times[1..d_2-c_2+1])$, which is equal to
$w'([1..d_2]\times[1..d_2-c_2+1])=g(S)$ (cf.~\refeq{for_sint}).
We assert that $\Fscr$ is the maximum-weight flow for $w$ and $S$.

Indeed, it is not difficult to see that any other admissible flow $\Fscr'$
for $S$ does not cover at least one vertex $v_{pq}$ with $q<p\le m$. This
means that the total contribution to $w(\Fscr')$ from the vertices
$v_{pq}$ such that $1\le p,q\le m$ is at most $-M$, whereas a similar
contribution for $\Fscr$ is zero. Since $M$ is large, we obtain
$w(\Fscr')< w(\Fscr)$ (this is just where the matrix $W''$ is
important). So $f_w(S)=w(\Fscr)=g(S)$, as required.

This completes the proof of Proposition~\ref{pr:matr}.

\subsection{Rearranging flows in the grid} \label{ssec:plan_flows}

Now we start proving Theorem~\ref{tm:flow} claiming that the function $f$ on
$C_m^{m'}$ generated by use of the flow model from any real $n\times m'$
matrix $W$ is indeed a TP-function.
In this subsection we prove a weakened version of this theorem. It is
stated in two lemmas (cf. equalities~\refeq{P3}
and~\refeq{P4}).

  \begin{lemma} \label{lm:geP3}
For an $n\times m'$ matrix $W$ and the function $f=f_w$ on $C_0^{m'}$,
  \begin{equation}  \label{eq:geP3}
   f(Xik)+f(Xj)\ge \max\{f(Xij)+f(Xk),\; f(Xi)+f(Xjk)\}
  \end{equation}
holds for all $i<j<k$ and $X\subset [n]-\{i,j,k\}$ with $|X|\le m'-2$.
 \end{lemma}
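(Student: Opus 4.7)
The plan is to prove both halves of the max, namely $f(Xik)+f(Xj)\ge f(Xij)+f(Xk)$ and $f(Xik)+f(Xj)\ge f(Xi)+f(Xjk)$, by a common flow-exchange argument of Lindstr\"om--Gessel--Viennot type. For the first inequality I would fix optimal admissible flows $\Fscr$ with source set $\{s_p\colon p\in Xij\}$ and $\Gscr$ with source set $\{s_p\colon p\in Xk\}$, realizing $f(Xij)$ and $f(Xk)$, and aim to produce admissible flows $\Fscr'$ for $Xik$ and $\Gscr'$ for $Xj$ whose combined weight equals $w(\Fscr)+w(\Gscr)$. Since $f(Xik)$ and $f(Xj)$ dominate the weights of any such flows, this delivers the desired inequality.

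The construction rests on the planarity of the grid $\Gamma_{n,m'}$: all edges point leftward or upward, so within any admissible flow the paths are pairwise non-crossing, and consequently the $r$-th leftmost source is automatically paired with the $r$-th bottommost sink. The technical workhorse is an \emph{uncrossing} operation --- whenever a path $P$ in one flow and a path $Q$ in the other share a vertex $v$, one may swap their suffixes starting at $v$ to produce two new paths with the same combined vertex multiset (hence the same combined vertex weight). Iterating this, I would rearrange the multiset $\Fscr\uplus\Gscr$ into any decomposition compatible with a prescribed planar source-sink pairing.

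Concretely, view $\Fscr\uplus\Gscr$ as a multi-flow of $2|X|+3$ paths with source multiset $X+X+\{i,j,k\}$ and sink multiset $2\{t_1,\ldots,t_{|X|+1}\}+\{t_{|X|+2}\}$. I would show that, after a finite sequence of uncrossings, this multiset can be partitioned into an admissible flow $\Fscr'$ with source set $Xik$ (using sinks $t_1,\ldots,t_{|X|+2}$) and an admissible flow $\Gscr'$ with source set $Xj$ (using sinks $t_1,\ldots,t_{|X|+1}$). By the ordering property, the individual source-to-sink pairings within each of $\Fscr'$ and $\Gscr'$ are forced, so the problem reduces to a matching-type feasibility question inside the multi-flow, which is solvable because the combined source and combined sink multisets on the two sides agree. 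Each uncrossing preserves total vertex weight, hence $w(\Fscr')+w(\Gscr')=w(\Fscr)+w(\Gscr)$, as required. The second inequality $f(Xik)+f(Xj)\ge f(Xi)+f(Xjk)$ follows in the same way, starting from optimal flows for $Xi$ and $Xjk$.

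The main obstacle I foresee is verifying the feasibility of the final partition of $\Fscr\uplus\Gscr$ into flows with source sets $Xik$ and $Xj$: one must check that the planar constraints do not obstruct the ``reassignment'' of the extra source $s_j$ in $\Fscr$ and the extra source $s_k$ in $\Gscr$. In effect this is a Hall-type condition in a planar DAG. I would handle it by tracing the distinguished paths $P_j\in\Fscr$ and $P_k\in\Gscr$, uncrossing them (and, where necessary, other path pairs in $X$) until the source-sink bijections forced by the planar ordering are attained in both of the target flows.
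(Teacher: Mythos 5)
Your overall strategy is the right one, and it is in fact the paper's: combine the two optimal flows, redistribute their pieces using planarity of the grid so that the total vertex weight is conserved, and bound the two resulting flows by $f(Xik)$ and $f(Xj)$. But there is a genuine gap at exactly the step you flag as an ``obstacle'': you never prove that the combined multiset $\Fscr\uplus\Gscr$ can be repartitioned into vertex-disjoint admissible flows with source sets $Xik$ and $Xj$, and the justification you offer --- that the combined source and sink multisets on the two sides agree --- cannot be sufficient. The same multiset equality holds for the split into $Xij$ and $Xk$ (the flows you started from) and for the split into $Xi$ and $Xjk$; if multiset agreement alone guaranteed any prescribed repartition, the same reasoning would also yield the reverse inequality \refeq{lP3}, which in this setting requires a genuinely different exchange argument (the paper proves it separately in Subsection~\ref{ssec:final} via the auxiliary graph $H$). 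What makes the split $(Xik,Xj)$ --- and in general only this split --- achievable with conserved weight is the interleaving $i<j<k$ of the three singleton sources among the doubled sources of $X$; that is precisely the content that has to be argued, and your sketch (``trace the distinguished paths and uncross until the forced bijections are attained'') states the goal rather than proving it, and moreover lacks a termination/progress measure for the iterated suffix swaps.

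The paper closes this gap with a device you could adopt: rearrange $\Pscr=\Fscr\uplus\Gscr$ (every grid vertex has multiplicity at most $2$ in it) into the canonical family $\Qscr=\{Q_1,\ldots,Q_{2r-1}\}$ that preserves all per-vertex multiplicities and whose lower regions are nested, $\Rscr(Q_1)\subseteq\cdots\subseteq\Rscr(Q_{2r-1})$; then let $\Fscr_1$ and $\Fscr_2$ be the odd- and even-indexed paths. Each $p\in X$ occupies two consecutive indices, while $i<j<k$ occupy single indices, so the $s_i$- and $s_k$-paths land on odd indices and the $s_j$-path on an even index; hence $\Fscr_1$ is a candidate flow for $Xik$ and $\Fscr_2$ for $Xj$, with the sinks distributing correctly as $t_1,\ldots,t_r$ and $t_1,\ldots,t_{r-1}$. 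Disjointness inside each parity class is then immediate: if $Q_p$ and $Q_{p+2}$ shared a vertex $v$, nestedness would force $v\in Q_{p+1}$ as well, contradicting multiplicity at most $2$. This single rearrangement-plus-parity argument replaces your open-ended uncrossing process and settles the Hall-type feasibility question in one stroke. (Proving both halves of the max separately, as you propose, is fine: the identical argument applied to optimal flows for $Xi$ and $Xjk$ gives the second inequality, since the combined source multiset is the same; the paper's ``w.l.o.g.'' merely selects whichever term attains the maximum.)
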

  \begin{proof}
We essentially use the fact that the grid $\Gamma=\Gamma_{n,m'}$ is a
planar graph. Let $r:=|X|+2$. W.l.o.g., we may assume that
$f(Xij)+f(Xk)\ge f(Xi)+f(Xjk)$. Let $\Fscr=\{P_1,\ldots,P_r\}$ be an
admissible flow in $\Gamma$ for $Xij$ (i.e., going from the sources
$\{s_p\colon p\in X\}\cup\{s_i,s_j\}$ to the sinks $\{t_1,\ldots,t_r\}$)
such that $f(Xij)=w(\Fscr)$ (cf.~\refeq{f-w}), and
$\Fscr'=\{P'_1,\ldots,P'_{r-1}\}$ an admissible flow for $Xk$ such that
$f(Xk)=w(\Fscr')$.

We combine these flows into one family
$\Pscr=\{P_1,\ldots,P_r,P'_1,\ldots,P'_{r-1}\}$ (possibly containing
repeated paths). Observe that

(i) each vertex of $\Gamma$ belongs to at most two paths in $\Pscr$;

(ii) for $p\in[n]$, the source $s_p$ is the beginning of exactly one path
in $\Pscr$ if $p\in\{i,j,k\}$, and the beginning of exactly two paths if
$p\in X$;

(iii) each of the sinks $t_1,\ldots,t_{r-1}$ is the end of exactly two
paths in $\Pscr$, and $t_t$ is the end of exactly one path.

Using a standard planar flow decomposition technique, one can rearrange
paths in $\Pscr$ so as to obtain a family $\Qscr=\{Q_1,\ldots,Q_{2r-1}\}$
of paths from sources to sinks having properties~(ii),(iii) as above
(with $\Qscr$ in place of $\Pscr$), and in addition:

(iv) for each vertex $v$ of $\Gamma$, the numbers of occurrences of $v$ in
paths of $\Qscr$ and in paths of $\Pscr$ are equal;

(v) $\Rscr(Q_1)\subseteq\Rscr(Q_2)\subseteq\ldots\subseteq
\Rscr(Q_{2r-1})$,

\noindent where $\Rscr(P)$ is the lower region of a path $P$, defined in
Subsection~\ref{ssec:f-matr}. (Such a $\Qscr$ is constructed uniquely.)
Partition $\Qscr$ into two subfamilies:
  \begin{equation} \label{eq:F1F2}
  \Fscr_1:=\{Q_p\colon p\;\;\mbox{is odd}\} \quad\mbox{and}\quad
  \Fscr_2:=\{Q_p\colon p\;\;\mbox{is even}\}
  \end{equation}

We assert that each of these subfamilies consists of pairwise disjoint
paths. Indeed, suppose this is not so. Then, in view of (v), some
subfamily contains ``consecutive'' paths $Q_p,Q_{p+2}$ that share a common
vertex $v$. But now the inclusions $\Rscr(Q_p)\subseteq\Rscr(Q_{p+1})
\subseteq\Rscr(Q_{p+2})$ imply that $v$ must belong to the third path
$Q_{p+1}$ as well, which is impossible by (i) and (iv).

This assertion together with (ii),(iii),(iv) easily implies that both
$\Fscr_1,\Fscr_2$ are admissible flows, that the set of the beginning
vertices of paths in $\Fscr_1$ consists of the sources $s_i$,$s_k$ and
$s_p$ for all $p\in X$, and that the set of the beginning vertices of
paths in $\Fscr_2$ consists of the sources $s_j$ and $s_p$ for all $p\in
X$. Here we use the fact that, due to $i<j<k$, the paths in $\Qscr$
beginning at $s_i,s_j,s_k$ have odd, even and odd indices, respectively.

Thus, $\Fscr_1$ and $\Fscr_2$ are admissible flows for $Xik$ and $Xj$,
respectively. Also (iv) implies
$w(\Fscr_1)+w(\Fscr_2)=w(\Fscr)+w(\Fscr')$. By the definition of $f$, we
have $f(Xik)\ge w(\Fscr_1)$ and $f(Xj)\ge w(\Fscr_2)$, and~\refeq{geP3}
follows.
  \end{proof}

  \begin{lemma} \label{lm:geP4}
For an $n\times m'$ matrix $W$ and the function $f=f_w$ on $C_0^{m'}$,
  \begin{equation}  \label{eq:geP4}
   f(Xik)+f(Xj\ell)\ge \max\{f(Xij)+f(Xk\ell),\; f(Xi\ell)+f(Xjk)\}
  \end{equation}
holds for all $i<j<k<\ell$ and $X\subset [n]-\{i,j,k,\ell\}$ with $|X|\le
m'-2$.
 \end{lemma}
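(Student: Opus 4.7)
The plan is to mimic the planar-flow-decomposition argument used for Lemma~\ref{lm:geP3}, but now combining two flows of equal cardinality. Put $r:=|X|+2$, and consider the two maxima in the right-hand side separately. Without loss of generality, take the case when the maximum is attained by $f(Xij)+f(Xk\ell)$; let $\Fscr=\{P_1,\ldots,P_r\}$ be an admissible maximum-weight flow for $Xij$ and $\Fscr'=\{P'_1,\ldots,P'_r\}$ be one for $Xk\ell$. Form the multifamily $\Pscr:=\Fscr\cup\Fscr'$ of $2r$ paths; then each source $s_p$ with $p\in X$ is used twice and each of $s_i,s_j,s_k,s_\ell$ once, while each sink $t_1,\ldots,t_r$ is used exactly twice.

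Next I apply the same planar rearrangement as in Lemma~\ref{lm:geP3} to re-assemble $\Pscr$ into a family $\Qscr=\{Q_1,\ldots,Q_{2r}\}$ such that the vertex-occurrence multiset is preserved, the source/sink multisets are preserved, and the lower regions are totally ordered by inclusion, $\Rscr(Q_1)\subseteq\ldots\subseteq\Rscr(Q_{2r})$. Splitting by parity as in~\refeq{F1F2} gives two subfamilies $\Fscr_1$ (odd indices) and $\Fscr_2$ (even indices) that are pairwise disjoint, by exactly the same three-paths-through-one-vertex contradiction used before. Since sinks in $\Qscr$ are used with multiplicity two and the lower regions are nested, the sinks of $\Fscr_1$ and of $\Fscr_2$ are both $\{t_1,\ldots,t_r\}$, so each $\Fscr_i$ is admissible.

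The key combinatorial point is identifying the source sets of $\Fscr_1$ and $\Fscr_2$. In the nested ordering, sources appear in non-decreasing order of their indices in $[n]$. Between any two of the four singletons $i<j<k<\ell$, the intervening multiset consists of sources from $X$, each contributing an even number of entries; hence the positions $p_i,p_j,p_k,p_\ell$ of $s_i,s_j,s_k,s_\ell$ in the sorted list of $2r$ sources alternate in parity. In particular $p_i\equiv p_k\pmod 2$ and $p_j\equiv p_\ell\pmod 2$, so one of $\Fscr_1,\Fscr_2$ has source set $X\cup\{i,k\}$ and the other $X\cup\{j,\ell\}$. By definition of $f_w$, $f(Xik)\ge w(\Fscr_a)$ and $f(Xj\ell)\ge w(\Fscr_b)$ for the appropriate assignment, while $w(\Fscr_1)+w(\Fscr_2)=w(\Pscr)=w(\Fscr)+w(\Fscr')=f(Xij)+f(Xk\ell)$. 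This gives the first inequality in~\refeq{geP4}.

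The second inequality, $f(Xik)+f(Xj\ell)\ge f(Xi\ell)+f(Xjk)$, is obtained by the same procedure starting from optimal flows for $Xi\ell$ and $Xjk$; the parity-alternation argument is identical because it depends only on the order $i<j<k<\ell$ in $[n]$ and the doubled multiplicity of sources from $X$, not on how the four singletons were paired into the two starting flows. The only step that requires real care is the parity bookkeeping in step~6, which I expect to be the main—though minor—obstacle; everything else is a direct transcription of the Lemma~\ref{lm:geP3} argument.
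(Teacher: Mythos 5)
Your proposal is correct and follows essentially the same route as the paper: combine maximum-weight flows for the two sets on the right-hand side, apply the planar rearrangement with nested lower regions from the proof of Lemma~\ref{lm:geP3}, split by parity, and use the ordering $i<j<k<\ell$ (sources from $X$ doubled) to see that the two resulting admissible flows serve $Xik$ and $Xj\ell$. Your explicit parity bookkeeping just spells out what the paper leaves as "similar to the above proof," and treating both terms of the maximum separately is an equivalent substitute for the paper's w.l.o.g. reduction.
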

  \begin{proof}
It is similar to the above proof. Assume $f(Xij)+f(Xk\ell)\ge
f(Xi\ell)+f(Xjk)$ (when the reverse inequality holds, the method is
similar). Take in $\Gamma_{n,m'}$ an admissible flow $\Fscr$ for $Xij$
such that $f(Xij)=w(\Fscr)$, and an admissible flow $\Fscr'$ for $Xk\ell$
such that $f(Xk\ell)=w(\Fscr')$. Combine these into one family $\Pscr$,
and rearrange paths in $\Pscr$ so as to obtain a (unique) family
$\Qscr=\{Q_1,\ldots,Q_{2m}\}$ of paths from sources to sinks satisfying
(iv),(v) as in the above proof (with $2m$ in place of $2r-1$). Partition
$\Fscr$ into subfamilies $\Fscr_1,\Fscr_2$ as in~\refeq{F1F2}. Then
$i<j<k<\ell$ implies that $\Fscr_1$ is an admissible flow for $Xik$, and
$\Fscr_2$ is an admissible flow for $Xj\ell$, whence~\refeq{geP4} follows.
  \end{proof}

\subsection{Getting equalities~\refeq{P3}--\refeq{P4}} \label{ssec:final}

It remains to show that the inequalities reverse
to~\refeq{geP3},\refeq{geP4} are valid as well, i.e.,
  \begin{equation} \label{eq:lP3}
     f(Xik)+f(Xj)\le \max\{f(Xij)+f(Xk),\; f(Xi)+f(Xjk)\}
  \end{equation}
and
  \begin{equation} \label{eq:lP4}
   f(Xik)+f(Xj\ell)\le \max\{f(Xij)+f(Xk\ell),\; f(Xi\ell)+f(Xjk)\},
  \end{equation}
hold for a function $f$ derived from an $n\times m'$ matrix $W$ and for
corresponding $X,i,j,k,\ell$. In fact, these relations can be obtained
from a result in~\cite[p.~60]{M} (whereas~\refeq{geP3},\refeq{geP4}
cannot). To make our description self-contained, we give direct proofs
of~\refeq{lP3} and~\refeq{lP4} here, by arguing in a similar spirit as
in~\cite{M}.

To prove~\refeq{lP3}, we take in $\Gamma=\Gamma_{n,m'}$ an admissible flow
$\Fscr$ for $Xik$ with $w(\Fscr)=f(Xik)$, and an admissible flow $\Fscr'$
for $Xj$ with $w(\Fscr')=f(Xj)$. Regarding $\Fscr$ as a graph, we modify
it as follows. Each vertex $v$ of $\Fscr$ is replaced by edge
$e_v=(v',v'')$; each original edge $(u,v)$ of $\Fscr$ is transformed into
edge $(u'',v')$. The resulting graph, consisting of pairwise disjoint
paths as before, is denoted by $\gamma(\Fscr)$. The graph $\Fscr'$ is
modified into $\gamma(\Fscr')$ in a similar way. Corresponding edges of
$\gamma(\Fscr)$ and $\gamma(\Fscr')$ are identified.

Next we construct an auxiliary graph $H$ by the following rule:

(a) if $e$ is an edge in $\gamma(\Fscr)$ but not in $\gamma(\Fscr')$, then
$e$ is included in $H$;

(b) if $e=(u,v)$ is an edge in $\gamma(\Fscr')$ but not in
$\gamma(\Fscr)$, then the edge $(v,u)$ {\em reverse} to $e$ is included in
$H$.

(Common edges of $\gamma(\Fscr),\gamma(\Fscr')$ are not included
in $H$.) One can see that $H$ has the following properties: each vertex
has at most one incoming edge and at most one outgoing edge; the vertices
having one outgoing edge and no incoming edge are exactly $s'_i,s'_k$; the
vertices having one incoming edge and no outgoing edge are exactly
$s'_j,t''_{r}$, where $r=|X|+2$. This implies that $H$ is represented as
the disjoint union of cycles, isolated vertices and two paths $P,Q$, where
either $P$ is a path from $s'_i$ to $s'_j$ and $Q$ is a path from $s'_k$
to $t''_r$ ({\em Case~1}), or $P$ is a path from $s'_k$ to $s'_j$ and $Q$
is a path from $s'_i$ to $t''_r$ ({\em Case~2}).

We use the path $P$ to rearrange the graphs $\gamma(\Fscr)$ and
$\gamma(\Fscr')$ as follows: for each edge $e=(u,v)$ of $P$,

(c) if $e$ is in $\gamma(\Fscr)$, then we delete $e$ from $\gamma(\Fscr)$
and add to $\gamma(\Fscr')$;

(d) if $e$ is not in $\gamma(\Fscr)$, and therefore, the edge $\bar
e=(v,u)$ reverse to $e$ is in $\gamma(\Fscr')$, then we delete $\bar e$
from $\gamma(\Fscr')$ and add to $\gamma(\Fscr)$.

Let $\Gscr$ and $\Gscr'$ be the graphs
obtained in this way from $\gamma(\Fscr)$ and $\gamma(\Fscr')$, respectively
(if there appear isolated vertices, we ignore them). In these
graphs we shrink each edge of the form $e_v=(v',v'')$ into one vertex $v$.
This produces subgraphs $\Fscr_1$ and $\Fscr_2$ of $\Gamma$, where the
former corresponds to $\Gscr$, and the latter to $\Gscr'$.

It is not difficult to deduce from (a)--(d) that each of $\Fscr_1,\Fscr_2$
consists of pairwise disjoint paths, and moreover: in Case~1, $\Fscr_1$ is
an admissible flow for $Xjk$ and $\Fscr_2$ is an admissible flow for $Xi$,
while in Case~2, $\Fscr_1$ is an admissible flow for $Xij$ and $\Fscr_2$
is an admissible flow for $Xk$. Also one can see that for each vertex $v$
of $\Gamma$, the numbers of occurrences of $v$ in paths of
$\{\Fscr_1,\Fscr_2\}$ and in paths of $\{\Fscr,\Fscr'\}$ are the same.
Therefore, $w(\Fscr)+w(\Fscr')=
w(\Fscr_1)+w(\Fscr_2)\le f(Xij)+f(Xk)$, yielding~\refeq{lP3}.

 \medskip
Finally, \refeq{lP4} is shown by a similar method, and we give here only a
short outline, leaving details to the reader. Take in $\Gamma$ an
admissible flow $\Fscr$ for $Xik$ with $w(\Fscr)=f(Xik)$, and an
admissible flow $\Fscr'$ for $Xj\ell$ with $w(\Fscr')=f(Xj\ell)$.
Construct the corresponding graphs $\gamma(\Fscr)$, $\gamma(\Fscr')$ and
$H$. Then $H$ is represented as the disjoint union of cycles, isolated
vertices and two paths $P,Q$, where either $P$ is a path from $s'_i$ to
$s'_j$ and $Q$ is a path from $s'_k$ to $s'_\ell$ ({\em Case~1$'$}), or
$P$ is a path from $s'_k$ to $s'_j$ and $Q$ is a path from $s'_i$ to
$s'_\ell$ ({\em Case~2$'$}). Using the path $P$, we rearrange
$\gamma(\Fscr),\gamma(\Fscr')$ according to (c),(d), eventually obtaining
subgraphs $\Fscr_1,\Fscr_2$ of $\Gamma$. Then both $\Fscr_1,\Fscr_2$ are
admissible flows. Furthermore, in Case~1$'$, $\Fscr_1$ is a flow for $Xjk$
and $\Fscr_2$ is a flow for $Xi\ell$, while in Case~2$'$, $\Fscr_1$ is a
flow for $Xij$ and $\Fscr_2$ is a flow for $Xk\ell$. Also
$w(\Fscr_1)+w(\Fscr_2)=w(\Fscr)+w(\Fscr')$, and~\refeq{lP4} follows.

This gives Theorem~\ref{tm:flow} and completes the proof of
Theorem~A$'$. \qed \qed

 \medskip
\noindent {\bf Remark 1.} As is said in the Introduction, Theorem~A
implies that any TP-function on a truncated box can be extended to a
TP-function on the entire box. In the Boolean case, this property can also
be immediately seen from the flow-in-matrix method. Indeed, given a
TP-function $f$ on the truncated cube $C_m^{m'}$, take an $n\times m'$
matrix $W$ determining $f$ and extend $W$ arbitrarily into an $n\times n$
matrix $W'$. Then $W'$ generates the desired TP-extension of $f$ to the
Boolean cube $2^{[n]}$.


\section{Surjectivity in the general case} \label{sec:gen}

In this section we complete the proof of Theorem~A in the general case, by
showing that the corresponding restriction map $res$ is surjective. We
will use a reduction to the Boolean case and Theorem~A$'$.

\subsection{Surjectivity in the case of an entire box.}
\label{ssec:ent_box}

We start with considering an arbitrary truncated box $B_m^{m'}(a)$. For
$i=1,\ldots,n$, denote $a_1+\ldots+a_i$ by $\bar a_i$, and let $N:=\bar
a_n=|a|$. The ordered set $[N]$ is naturally partitioned into intervals ({\em
blocks}) $L_1,\ldots,L_n$, where $L_i:=\bar a_{i-1}+[a_i]=[\bar
a_{i-1}+1..\bar a_i]$ (letting $\bar a_0:=0$). We associate to a vector
$x\in B_m^{m'}(a)$ the subset $[x]$ of $[N]$ such that
  \begin{numitem}
for $i=1,\ldots,n$, the set $[x]\cap L_i$ consists of $x_i$ {\em
beginning} elements of $L_i$ (i.e., is of the form $\bar a_{i-1}+[x_i]$).
   \label{eq:map[]}
   \end{numitem}
This gives the map $[\,]:B_m^{m'}(a)\to C_m^{m'}(N)$, where $C_m^{m'}(N)$
is the truncated cube formed by the sets $S\subseteq [N]$ with $m\le
|S|\le m'$. Conversely,
  \begin{numitem}
for $S\subseteq [N]$, define $\#(S)$ to be the vector $x\in B(a)$ such
that $x_i=|S\cap L_i|$, $i=1,\ldots,n$.
   \label{eq:mapback}
   \end{numitem}
This gives the map $\#:C_m^{m'}(N)\to B_m^{m'}(a)$. Observe that
$\#([x])=x$. The map $[\,]$ induces the
corresponding map $[\,]^\ast$ of the functions $g$ on $C_m^{m'}(N)$ to
functions $f$ on $B_m^{m'}(a)$: the value of $f$ on $x\in B_m^{m'}(a)$ is
equal to $g([x])$. We observe that
   \begin{numitem}
$[\,]^\ast(g)$ brings a TP-function $g$ on $C_m^{m'}(N)$ to a TP-function
$f$ on $B_m^{m'}(a)$.
    \label{eq:g-f}
    \end{numitem}
Indeed, one can see that for the six vectors $x'$ occurring in
relation~\refeq{GP3}, the corresponding sets $[x']$ are just those that
occur in~\refeq{P3}. So validity of~\refeq{P3} for the latter implies
validity of~\refeq{GP3} for the former. For~\refeq{GP4} and~\refeq{P4},
the argument is similar.

In light of~\refeq{g-f}, the surjectivity of $res$ would follow from the
assertion:
   \begin{numitem}
for any function $f_0:\Bscr\to \Rset$, there is a TP-function $g$ on
$C_m^{m'}(N)$ such that $g([x])=f_0(x)$ holds for all $x\in\Bscr$.
   \label{eq:g-f_0}
   \end{numitem}
(Indeed, the image by $[\,]^\ast$ of $g$ is a TP-function $f$ on
$B_m^{m'}(a)$ coinciding with $f_0$ within $\Bscr$, i.e., having the
required property $res(f)=f_0$.)

In the rest of this subsection we prove~\refeq{g-f_0} in the case when the
box is truncated only from above, i.e., when $m=0$ or, equivalently, $m=1$
(this is technically simpler because in this case $\Bscr$ does not contain
sints). Using this as a base, we will prove, in the next subsection, the
surjectivity of $res$ for any $m$ by applying induction on $m$.

  \begin{prop} \label{pr:surj0}
\refeq{g-f_0} is valid for $\Bscr:=Int(a;1)\cup\ldots\cup I(a;m')$ and
$C_0^{m'}(N)$.
  \end{prop}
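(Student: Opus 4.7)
The plan is to reduce to the Boolean case already proved as Theorem~A$'$ via the flow-in-matrix model. By Corollary~\ref{cor:3biject} (specialized to $m=0$, where $W''=0$), the map $\phi : \Wscr'(0,m') \to \Tscr^{0}(C_0^{m'}(N))$ sending a matrix~$W$ to its flow-generated function $f_w$ is a bijection, and by~\refeq{g-f} the pullback $[\,]^*(f_w)$ is automatically a TP-function on $B_0^{m'}(a)$. Hence~\refeq{g-f_0} for $m=0$ would follow if I could exhibit, for each normalized $f_0\colon\Bscr\to\Rset$, a matrix $W\in\Wscr'(0,m')$ satisfying $f_w([x])=f_0(x)$ for every fint $x\in\Bscr$.

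The first step is to describe the shape of $[x]$ for a fint $x\in\Bscr^0$. If $c(x)=d(x)$, or if $c(x)<d(x)$ and $x_{c(x)}=a_{c(x)}$, then $[x]$ is itself an interval in $[N]$; otherwise $[x]$ is the disjoint union of two intervals separated by a gap of length $a_{c(x)}-x_{c(x)}$ inside the block $L_{c(x)}$. To each $x\in\Bscr^0$ I attach the grid index $(p_x,q_x):=(\bar a_{d(x)-1}+x_{d(x)},\,|x|)\in\Pi$, where $\Pi:=\{(p,q):1\le q\le\min\{p,m'\}\}$. A direct reconstruction (determine $d(x)$ from the block containing $p_x$, then $x_{d(x)}$, then $c(x)$ and $x_{c(x)}$ from $|x|=q_x$ and the fint constraint) shows this map is injective into $\Pi$. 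I would then define the matrix $W$ entry by entry, processing $(p,q)\in\Pi$ in lex order, by setting $w_{p_x,q_x}$ via the direct analogue of~\refeq{g-Wp} applied to $f_0(x)$ and to values $f_0$ of fints appearing at earlier indices, while setting $w_{p,q}:=0$ at all positions of $\Pi$ not in the image of $x\mapsto(p_x,q_x)$.

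It remains to verify $f_w([x])=f_0(x)$ for every $x\in\Bscr^0$, which I would do by induction on the lex order of $(p_x,q_x)$. For an interval-type fint, $[x]$ is an interval of $[N]$, there is a unique admissible flow in $\Gamma_{N,m'}$ with source set $\{s_j:j\in[x]\}$, and its weight is the rectangle sum $w([p_x]\times[q_x])$, which collapses telescopically (by the inductive formula for $w$) to $f_0(x)$. For a non-interval-type fint, I identify the \emph{lowest} admissible flow for $[x]$ exactly as in the sesqui-interval construction of Subsection~\ref{ssec:f-matr}; its weight again reduces to the desired $f_0(x)$ because the entries of $W$ at positions missed by this flow but lying inside $\Pi$ are zero by construction. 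The main obstacle is the maximality of the lowest flow in this non-interval case: in the Boolean sesqui-interval argument this was enforced by the large negative penalty $w''_{11}$, but here with $m=0$ there is no $W''$ available. I would overcome this either by adding, if necessary, a tiny auxiliary correction matrix analogous to $W''$ (legitimate because we work up to an additive constant) or, more cleanly, by directly exploiting the planar flow decomposition of Subsection~\ref{ssec:plan_flows}: any admissible flow for $[x]$ of weight strictly exceeding the lowest one could be paired with the lowest flow to produce, via the pairing/swapping argument used for~\refeq{lP3}--\refeq{lP4}, two disjoint admissible flows whose source sets are intervals; the resulting weight inequality would contradict the interval-type base case of the induction.
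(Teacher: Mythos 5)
Your general framing (reduce to Theorem~A$'$ via the flow model, treat $[x]$ as an interval or a two-interval set, handle interval-type fints by rectangle sums) is in the spirit of the paper, and the interval-type case of your induction is fine. But the non-interval case is not a technical ``obstacle'' removable by a small fix: the identity $f_w([x])=f_0(x)$ is simply false for your matrix in general. Take $n=2$, $a=(2,1)$, $N=3$, $m=0$, $m'=3$, and $f_0(2,0)=1$, $f_0(1,0)=f_0(0,1)=f_0(1,1)=0$. Your recipe gives $w_{11}=w_{21}=w_{31}=0$, $w_{22}=1$, $w_{32}=-1$, hence $g:=f_w$ has $g(\{1\})=g(\{2\})=g(\{3\})=g(\{2,3\})=0$ and $g(\{1,2\})=1$; the TP3-relation then forces $g(\{1,3\})=\max\{g(\{1,2\})+g(\{3\}),\ g(\{1\})+g(\{2,3\})\}-g(\{2\})=1\ne 0=f_0(1,1)$, although $[x]=\{1,3\}$ for the fint $x=(1,1)$. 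In flow terms, the path from $s_3$ that first runs left through the vertex $v_{21}$ (where you planted a zero) beats the intended flow. So whenever $f_0$ violates a submodularity-type inequality around a shifted head, the maximum defining $f_w([x])$ is attained by the wrong flow. Your fix (2) therefore cannot succeed: no exchange argument can rule out a heavier flow that genuinely exists; besides, pairing and swapping two flows with the same source set $[x]$ produces flows for $[x]$ again, not flows whose source sets are intervals.

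Your fix (1) points toward the right idea but is mis-calibrated, and it is exactly where the real content of the proposition lies. The paper does not set $g(I)=f_0(\#(I))$ on Boolean intervals; it sets $g(Q)=f_0(\#(Q))+M\eps(Q)$ on all quasi-intervals, see \refeq{quasi}, where $\eps(Q)$ is the combinatorial excess of the shifted head and $M$ is large. This perturbation is neither tiny nor an additive constant: it shifts different basis intervals of $[N]$ by different, large amounts, precisely so that at every TP3-step of the propagation the maximum is forced onto the branch with $\#(Q)=\#(E)$ and $\#(B)=\#(F)$; the verification is an induction on $r(Q)$ resting on the excess relations $\eps(Q)+\eps(B)=\eps(E)+\eps(F)>\eps(C)+\eps(D)$ of \refeq{pairs}. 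In the toy example above this amounts to putting $w_{21}=M$ instead of $0$, i.e., the needed correction sits exactly at the positions you force to vanish, and it then alters the other entries through the triangular solve. Without this perturbation-and-propagation argument (or an equivalent mechanism), the reduction to Theorem~A$'$ does not go through, so the proposal has a genuine gap at its central step.
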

  \begin{proof}
By Theorem~A$'$, it is sufficient to assign, in a due way, values of
$g$ on the set of intervals $I$ of $C_0^{m'}(N)$. If $[\#(I)]=I$, i.e.,
if $I$ is the image by $[\,]$ of a fuzzy-interval, the task is trivial: we
put $g(I)=f_0(\#(I))$. But when $[\#(I)]\ne I$, our method of assigning
$g(I)$ via $f_0(\#(I))$ becomes more involved. Moreover, to prove
the correctness of our assignment, we will be forced to express $g$
explicitly for a larger family of sets.

To explain the idea, consider a fint $x$ and let $[c..d]$ be its support.
If $c=d$ or if $x_c=a_c$, then $[x]$ is an interval in $[N]$. In a general
case we partition $[x]$ into two subsets: the {\em head} $H(x):=[x]\cap L_c$
and the {\em tail} $T(x):=[x]\cap[\bar a_c+1..N]$. Clearly both the head
and the tail are intervals (unless $T(x)=\emptyset$).

Besides $[x]$, we produce from $x$ additional sets $Q_1,\ldots,Q_q$ in
$[N]$, where $q=a_c-x_c$ and for $p=1,\ldots,q$, $Q_p$ is obtained from
$[x]$ by shifting its head $H(x)$ by $p$ positions to the right. Formally:
$Q_p:=(p+H(x))\cup T(x)$. The last set $Q_q$ (whose head is pressed to the
end of the block $L_c$) is already a genuine interval. We call each of
$Q_0:=[x],Q_1,\ldots,Q_q$ a {\em quasi-interval}, and associate to each
$Q=Q_p$ three numbers: $h(Q):=x_c$ (the size of the head), $s(Q):=p$ (the
{\em shift} of the head), and $\eps(Q):=s(Q)h(Q)$ (the {\em excess} of
$Q$). Notice that the excess of $[x]$ is zero, whereas the excess of the
interval $Q_q$ is maximal among the quasi-intervals created from $x$.

Let $\Qscr$ be the family of all quasi-intervals in $C_0^{m'}(N)$ (in particular,
$\Qscr$ contains all intervals and the images of all fints). This set is
just where we indicate $g$ explicitly, as follows. Choose a
large positive number $M$ (w.r.t. $f_0$). Define
   \begin{equation} \label{eq:quasi}
   g(Q):=f_0(\#(Q))+M\eps(Q) \qquad\mbox{for each $Q\in\Qscr$}.
   \end{equation}

Then $g$ satisfies the equalities in~\refeq{g-f_0}. It remains to prove
the following

\medskip
 \noindent
{\bf Claim.} {\em Let $g$ be the function on $\Qscr$ defined
by~\refeq{quasi}. Then $g$ is extendable to a TP-function on
$C_0^{m'}(N)$.}

  \medskip
 \noindent
 {\em Proof of the Claim.} Let $g'$ be the TP-function on $C_0^{m'}(N)$
coinciding with $g$ on the set of intervals (the standard basis there). We
show that $g$ and $g'$ are equal on all quasi-intervals as well.

Consider a quasi-interval $Q$ which is not an interval, and let $i=i(Q)$
and $k=k(Q)$ be the first and last elements of $Q$, respectively. We apply
induction on $r(Q):=k(Q)-i(Q)$. Take the element $j\in[N]$ next to the end
of the head of $Q$. Then $j\not\in Q$ and $i<j<k$. Form the sets
$X:=Q-\{i,k\}$ and
  $$
  B:=Xj,\;\; C:=Xij,\;\; D:=Xk,\;\; E:=Xjk,\;\;  F:=Xi.
  $$
These five sets $Q'$ are quasi-intervals (maybe even intervals) with
$r(Q')<r(Q)$. So, by induction, $g'(Q')=g(Q')$ holds for these $Q'$. Let
$h,s,\eps$ stand for $h(Q),s(Q),\eps(Q)$, respectively. A direct
verification shows that
 \begin{eqnarray*}
 &&\eps(Q)=sh;  \\
 &&\eps(B)=(s+1)h\quad\mbox{(the head moves to the right)}; \\
 &&\eps(C)=s(h+1) \quad \mbox{(the head increases at the end)}; \\
 &&\eps(D)=(s+1)(h-1) \quad \mbox{(the head decreases at the beginning)}; \\
 &&\eps(E)=(s+1)h  \quad\mbox{(the head moves to the right)}; \\
 &&\eps(F)=sh \quad\mbox{(the head is stable)}.
  \end{eqnarray*}
(Note that when $h=1$, we have $h(D)\le a_{i+1}$ and $s(D)=0$, so the
expression for $\eps(D)$ gives the correct value 0.) It follows that
  \begin{equation} \label{eq:pairs}
  \eps(Q)+\eps(B)=\eps(E)+\eps(F)>\eps(C)+\eps(D).
   \end{equation}

Since $M$ is large, we obtain from~\refeq{quasi} and~\refeq{pairs} that
$g(E)+g(F)>g(C)+g(D)$. Therefore,
  \begin{equation} \label{eq:equal}
  g'(Q)+g(B)=g(E)+g(F),
  \end{equation}
taking into account that $g'$ is a TP-function and that, by induction,
$g'$ and $g$ are equal on $B,C,D,E,F$. Next, observe that $\#(Q)=\#(E)$
and $\#(B)=\#(F)$. Therefore, $f_0(\#(Q))+f_0(\#(B))=f_0(\#(E))+
f_0(\#(F))$. This together with the equality in~\refeq{pairs} gives
  $$
  g(Q)+g(B)=g(E)+g(F).
   $$
Now comparing this and~\refeq{equal}, we conclude that $g'(Q)=g(Q)$. This
yields the Claim and completes the proof of the proposition. \qed
 \end{proof}

\subsection{Reduction to the entire box.}
\label{ssec:red_box}

To complete the proof of Theorem~A, it remains to show the surjectivity of
the restriction map $res$ for an arbitrary $m$. We apply induction on $m$,
relying on Proposition~\ref{pr:surj0} which gives a base for the
induction.

  \begin{prop}  \label{pr:m-m-1}
Let $0< m\le m'$ and let the restriction map $res':
\Tscr(B_{m-1}^{m'}(a))\to \Rset^{\Bscr'}$ be surjective, where
$\Bscr'=Sint(a;m-1)\cup Int(a;m-1)\cup Int(a;m)\cup\ldots\cup I(a;m')$.
Then $res:\Tscr(B_m^{m'}(a))\to \Rset^{\Bscr}$ is surjective as well.
  \end{prop}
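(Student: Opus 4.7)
The plan is to invoke the induction hypothesis on $B_{m-1}^{m'}(a)$. Given $f_0 : \Bscr \to \Rset$, I construct an extension $\tilde f_0 : \Bscr' \to \Rset$ equal to $f_0$ on the intersection $\Bscr \cap \Bscr' = Int(a;m) \cup Int(a;m+1) \cup \ldots \cup Int(a;m')$. The hypothesis then produces $\tilde f \in \Tscr(B_{m-1}^{m'}(a))$ with $\tilde f|_{\Bscr'} = \tilde f_0$, and the restriction $f := \tilde f|_{B_m^{m'}(a)}$ is automatically a TP-function on $B_m^{m'}(a)$ agreeing with $f_0$ on $\Bscr \cap \Bscr'$. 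The only freedom—and the only thing to arrange—is to choose $\tilde f_0$ on $\Sscr := \Bscr' \setminus \Bscr = Sint(a;m-1) \cup Int(a;m-1)$ so that $f(x) = f_0(x)$ holds also for every sint $x \in Sint(a;m)$.

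To do this, for each $x \in Sint(a;m)$ with decomposition $x = x' + x''$ I use the associated vector $y(x) := x - 1_{d(x')}$. A short case check (according to whether $x_{d(x')} > 1$ or $=1$, and whether $d(x') = 1$) shows that $y(x) \in \Sscr$ in all cases, and a similar check shows that $x \mapsto y(x)$ is an injection $Sint(a;m) \hookrightarrow \Sscr$. For each such $x$, I then invoke a TP3-relation of the form $f(Xik) + f(Xj) = \max\{f(Xij) + f(Xk),\ f(Xi) + f(Xjk)\}$ with $Xik = x$ and $Xk = y(x)$, which forces $i = d(x')$ and leaves $j,k$ to be chosen in the ``gap'' between $d(x')$ and $c(x'')$ when it is non-empty, and in $\suppo(x'')$ otherwise. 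This expresses $\tilde f(x)$ as a piecewise-linear function of $\tilde f_0(y(x))$ together with other values that either lie in $\Bscr \cap \Bscr'$, lie in $\Sscr$, or are recursively computable from them via Case~1 of Proposition~\ref{pr:inj_gen} (applied inside layers $m$ and $m-1$). Solving this single equation for $\tilde f_0(y(x))$, and processing the sints in increasing order of the $\eta$-invariant from Proposition~\ref{pr:inj_gen}, should yield a well-defined assignment in which each fresh $\tilde f_0(y(x))$ is computed from quantities already settled at earlier steps.

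The main obstacle is the combinatorial bookkeeping needed to justify consistency. Concretely, one must (i) exhibit an admissible TP3-relation as above for every configuration of a sint $x$, including edge cases where the gap $(d(x'), c(x''))$ is empty or $x''$ is degenerate (here a different pair $(j,k)$ is needed, or a companion $y'(x)$ different from $y(x)$ must be used); (ii) verify that when solving for $\tilde f_0(y(x))$, only values already set on $\Sscr$ or coming from $f_0$ appear on the right-hand side, so the assignment is non-circular; and (iii) check that elements of $\Sscr$ never hit by the map $x \mapsto y(x)$ can be assigned arbitrary values (say $0$) without disturbing the values of $\tilde f$ on $Sint(a;m)$. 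Once this combinatorial argument is completed, the induction hypothesis produces the desired TP-function $\tilde f$, and the injectivity part of Proposition~\ref{pr:inj_gen} then identifies $f = \tilde f|_{B_m^{m'}(a)}$ as the unique TP-function on $B_m^{m'}(a)$ extending $f_0$, proving surjectivity of $res$.
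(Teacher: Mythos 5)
Your overall reduction coincides with the paper's: extend the data from $\Bscr$ to $\Bscr'$, invoke the surjectivity hypothesis on $B_{m-1}^{m'}(a)$, restrict the resulting TP-function back to $B_m^{m'}(a)$, and observe that the only thing to arrange is that the induced function reproduces $f_0$ on $Sint(a;m)$. The divergence, and the genuine gap, is in how the values on $\Sscr=Sint(a;m-1)\cup Int(a;m-1)$ are produced. You propose to \emph{solve} for each $\tilde f_0(y(x))$ from one TP3-relation in which $x$ occurs on the left and $y(x)$ occurs inside the maximum on the right. But an equation of the form $\max\{A+u,\;C\}=D$ in the unknown $u$ has no solution when $D<C$ and infinitely many when $D=C$; here $D=f_0(x)+\tilde f(Xj)$ with $f_0$ an \emph{arbitrary} function on $\Bscr$, and the competing branch $C=\tilde f(Xi)+\tilde f(Xjk)$ is not under your control, so solvability of your defining equations is simply not guaranteed. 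This branch-selection problem is not bookkeeping; it is the crux, and it is exactly what the paper's construction is engineered to defeat. The paper does not solve for the lower-layer values: it prescribes them explicitly by $g_0(y):=f_0(y^\uparrow)+Mt(y)$, where $y^\uparrow=y+1_{p(y)}$ is the lift through the insertion point, $t(y)$ is the tail sum beyond it, and $M$ is large. Then, in the TP3-relation for the cortege $(x-1_i-1_k,i,j,k)$ with $i=\beta(x)$, $j=\alpha(x)$, $k=d(x)$ (relation~\refeq{g}), the $M$-penalties force the maximum onto the branch $g(C)+g(D)$, where $g(D)=f_0(x)+M\Sigma$ already encodes the target value and the $M$-terms cancel, giving $g(x)=f_0(x)$; the verification runs by induction on the invariant $\eta$ of Proposition~\ref{pr:inj_gen}.

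A second, related defect: even when your single equation happens to be solvable, the value $\tilde f(x)$ of the TP-function supplied by the induction hypothesis is determined by $\tilde f_0$ on \emph{all} of $\Bscr'$ through the full recurrence, not by the one relation you used to fix $\tilde f_0(y(x))$. You would still have to show that the other five entries of that relation are computed from data already settled at earlier stages of your $\eta$-ordering (they are generic vectors of layers $m-1$ and $m$, whose recursive evaluation bottoms out on $\Sscr$, i.e.\ on values possibly not yet assigned), and that the maximum in that relation is actually attained on the branch you assumed when solving. Injectivity of $x\mapsto y(x)$ and assigning $0$ on the unused part of $\Sscr$ do not give this. The natural repair is precisely the paper's device: build a large parameter into the assigned values so that the branch selection is forced and the unknowns cancel identically, instead of hoping the equations are consistent for arbitrary $f_0$.
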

  \begin{proof}
Let $f_0$ be a function on $\Bscr$. Our aim is to construct a function
$g_0$ on $\Bscr'$ satisfying the following conditions:

(a) $g_0$ and $f_0$ are equal on the set $\Dscr:=Int(a;m)\cup\ldots\cup
Int(a;m')$; and

(b) the TP-function $g$ on $B_{m-1}^{m'}(a)$ with $res'(g)=g_0$ satisfies
   \begin{equation}  \label{eq:g-f02}
   g(x)=f_0(x)\qquad \mbox{for each $x\in Sint(a;m)$.}
   \end{equation}
Then (a),(b) imply that the restriction $f$ of $g$ to $B_m^{m'}(a)$ is a
TP-function possessing the desired property $res(f)=f_0$.

The desired function $g_0$ is defined on the vectors in
$\Bscr'-\Dscr=Sint(a;m-1)\cup Int(a:m-1)$ as follows.

For $y\in \Bscr'-\Dscr$, let $p=p(y)$ denote the minimal number such that
$y_p<a_p$. We refer to $p(y)$ as the {\em insertion point} for $y$ and
denote the vector $y+1_p$ by $y^\uparrow$. This $y^\uparrow$ has size $m$
and lies in $B_m^{m'}(a)$. Moreover, $y^\uparrow$ is either a fint or a
sint. Define
     $$
     g_0(y):=f_0(y^\uparrow)+Mt(y),
     $$
where $M$ is a large positive number (w.r.t. $f_0$) and
$t(y):=y_{p+1}+\ldots+y_n$.

We assert that $g_0$ defined this way satisfies~\refeq{g-f02}. To show
this, consider $x\in Sint(a;m)$. Let $\alpha(x),\beta(x)$ be defined as
in~\refeq{alphabeta} (they exist, otherwise $x$ would be a fint), and
assign the parameter $\eta(x)$ as in~\refeq{eta_gen}. Put $i:=\beta(x)$,
$j:=\alpha(x)$ and $k=d(x)$. By the TP3-relation for the function $g$ and
the cortege $(x-1_i-1_k,i,j,k)$, we have
     \begin{equation} \label{eq:g}
               g(x)=\max\{g(C)+g(D),g(E)+g(F)\}-g(B),
     \end{equation}
where $B:=x-1_i +1_j -1_k$, $C:=x+1_j -1_k$, $D:=x-1_i$, $E:=x-1_i+1_j$,
$F:=x-1_k$. We observe the following, letting
$\Sigma:=x_{i+1}+\ldots+x_n$.

 \smallskip
(i) The vectors $C$ and $E$ have size $m$, $C$ is either a fint or a sint
with $\eta(C)<\eta(x)$, and similarly for $E$. So, applying induction on
$\eta$, we have $g(C)=f_0(C)$ and $g(E)=f_0(E)$.

 \smallskip
(ii) The vector $B$ has size $m-1$ and its insertion point is $i$. Then
$B^\uparrow=B+1_i =C$. Also $t(B)=\Sigma+1-1=\Sigma$, whence
$g(B)=f_0(C)+M\Sigma$.

 \smallskip
(iii) The vector $D$ has size $m-1$ and its insertion point is $i$. Then
$D^\uparrow=D+1_i =x$. Also $t(D)=\Sigma$, whence $g(D)=f_0(x)+M\Sigma$.

 \smallskip
(iv) The vector $F$ has size $m-1$ and its insertion point is at least
$i$. This and $F_k=x_k-1$ imply $t(F)\le \Sigma-1$, whence $g(F)\le
f_0(F^\uparrow)+M\Sigma-M$.

\medskip
Since $M$ is large and $t(D)\ge t(F)+M$ (by (iii),(iv)), the maximum
in~\refeq{g} is attained by the first sum occurring there. Therefore, in
view of (i)--(iii),
   $$
   g(x)=g(C)+g(D)-g(B)=f_0(C)+(f_0(x)+M\Sigma)-(f_0(C)+M\Sigma)=f_0(x),
   $$
as required, yielding the proposition.
  \end{proof}

This completes the proof of Theorem~A. \qed\qed


\section{Bases and rhombic tiling diagrams} \label{sec:tiling}

In this section, we first define rhombic tiling diagrams (tilings) and
their dual objects, wiring diagrams. Then we explain a correspondence
between tilings and bases for TP-functions on a box that are derived from
the standard basis by a series of special mutations, so-called normal
bases. Then we characterize the subsets of a box that can be extended to a
normal basis. Finally, we demonstrate some applications of this
characterization, in particular, that a TP-function on a box is extendable
to a larger box.

\subsection{Tilings and wirings} \label{ssec:til-wir}

As before, let $a$ be an $n$-tuple of positive integers. By a {\em rhombic
tiling diagram}, or an {\em RT-diagram}, we mean the following
construction.

In the half-plane $\Rset\times \Rset_+$ take $n$ vectors
$\xi_1,\ldots,\xi_n$ so that: (i) these vectors have equal Euclidean norms
and are ordered clockwise around $(0,0)$, and (ii) all integer
combinations of these vectors are different. Then the set
$Z(a):=\{\lambda_1\xi_1+\ldots+ \lambda_n\xi_n\colon \lambda_i\in\Rset,\;
0\le\lambda_i\le a_i,\; i=1,\ldots,n\}$ is a $2n$-gone. Moreover, $Z=Z(a)$
is a {\em zonogon}, as it is the sum of the segments $[0,a_i\xi_i]$. (Also
it is a linear projection of the solid box ${\rm conv}(B(a))$ into the
plane.) Its left boundary $L$, from the minimal point $p_0:=(0,0)$ to the
maximal point $p_n:=a_1\xi_1+\ldots+a_n\xi_n$, is formed by the points
(vertices) $p_i:=a_1\xi_1+\ldots+a_i\xi_i$ ($i=0,\ldots,n$) connected by
the segments $p_{i-1}p_i:=\{\lambda p_{i-1}+(1-\lambda)p_i\colon
0\le\lambda\le 1\}$. Its right boundary $R$, from $p_0=:p'_n$ to
$p_n=:p'_0$, is formed by the points $p'_i:=a_i\xi_i+\ldots+a_n\xi_n$
($i=0,\ldots,n$) connected by the segments $p'_ip'_{i-1}$.

An {\em RT-diagram} $D$ is a subdivision of the zonogon $Z$ into
``little'' rhombi of the form $x+\{\lambda_i\xi_i+\lambda_j\xi_j\colon
0\le \lambda_i,\lambda_j\le 1\}$ for some $i<j$ and a point $x$ in $Z$.
Such a rhombus is called an $ij$-{\em rhombus}. The diagram $D$ may also
be regarded as a directed planar graph $(V(D),E(D))$ whose vertices and
edges are the vertices and side segments of the rhombi, respectively. Then
each edge $e$ corresponds to a parallel translation of some vector $\xi_i$
and is directed accordingly. Two instances are illustrated in
Fig.~\ref{fig:RT}.

\begin{figure}[htb]
 \begin{center}
  \unitlength=1mm
  \begin{picture}(135,40)(0,5)
  \put(41,5){\vector(-4,1){11.5}}
  \put(29,8){\vector(-2,3){5.7}}
  \put(23,17){\vector(-2,3){5.7}}
  \put(17,26){\vector(2,3){5.7}}
  \put(23,35){\vector(4,1){11.5}}
  \put(47,35){\vector(-4,1){11.5}}
  \put(53,26){\vector(-2,3){5.7}}
  \put(59,17){\vector(-2,3){5.7}}
  \put(53,8){\vector(2,3){5.7}}
  \put(41,5){\vector(4,1){11.5}}
  \put(41,5){\vector(-2,3){5.7}}
  \put(35,14){\vector(-2,3){5.7}}
  \put(47,14){\vector(-2,3){5.7}}
  \put(41,23){\vector(-2,3){5.7}}
  \put(35,14){\vector(-4,1){11.5}}
  \put(29,23){\vector(-4,1){11.5}}
  \put(35,32){\vector(-4,1){11.5}}
  \put(35,14){\vector(2,3){5.7}}
  \put(29,23){\vector(2,3){5.7}}
  \put(41,5){\vector(2,3){5.7}}
  \put(35,32){\vector(4,1){11.5}}
  \put(41,23){\vector(4,1){11.5}}
  \put(47,14){\vector(4,1){11.5}}
  \put(32,3){$\xi_1$}
  \put(34,9){$\xi_2$}
  \put(46,9){$\xi_3$}
  \put(48,3){$\xi_4$}
  \put(40,2){${\bf 0}$}
  \put(26,6){$p_1$}
  \put(13,25){$p_2$}
  \put(19,35){$p_3$}
  \put(35,39){$p_4=p'_0$}
  \put(49,34){$p'_1$}
  \put(60,16){$p'_2$}
  \put(54,7){$p'_3$}
  \put(29,8){\circle*{2}}
  \put(23,17){\circle*{2}}
  \put(35,14){\circle*{2}}
  \put(41,23){\circle*{2}}
  \put(47,14){\circle*{2}}
  \put(41,5){\circle*{1}}
  \put(17,26){\circle*{1}}
  \put(23,35){\circle*{1}}
  \put(47,35){\circle*{1}}
  \put(53,26){\circle*{1}}
  \put(59,17){\circle*{1}}
  \put(53,8){\circle*{1}}
  \put(41,5){\circle*{1}}
  \put(29,23){\circle*{1}}
  \put(35,32){\circle*{1}}
  \put(35,38){\circle*{1}}
%
  \put(111,5){\vector(-4,1){11.5}}
  \put(99,8){\vector(-2,3){5.7}}
  \put(93,17){\vector(-2,3){5.7}}
  \put(87,26){\vector(2,3){5.7}}
  \put(93,35){\vector(4,1){11.5}}
  \put(117,35){\vector(-4,1){11.5}}
  \put(123,26){\vector(-2,3){5.7}}
  \put(129,17){\vector(-2,3){5.7}}
  \put(123,8){\vector(2,3){5.7}}
  \put(111,5){\vector(4,1){11.5}}
  \put(111,5){\vector(-2,3){5.7}}
  \put(99,26){\vector(-2,3){5.7}}
  \put(123,8){\vector(-2,3){5.7}}
  \put(111,23){\vector(-2,3){5.7}}
  \put(105,14){\vector(-4,1){11.5}}
  \put(111,23){\vector(-4,1){11.5}}
  \put(105,32){\vector(-4,1){11.5}}
  \put(105,14){\vector(2,3){5.7}}
  \put(93,17){\vector(2,3){5.7}}
  \put(117,17){\vector(2,3){5.7}}
  \put(105,32){\vector(4,1){11.5}}
  \put(111,23){\vector(4,1){11.5}}
  \put(105,14){\vector(4,1){11.5}}
  \put(111,5){\circle*{1}}
  \put(99,8){\circle*{1}}
  \put(93,17){\circle*{1}}
  \put(87,26){\circle*{1}}
  \put(93,35){\circle*{1}}
  \put(117,35){\circle*{1}}
  \put(123,26){\circle*{1}}
  \put(129,17){\circle*{1}}
  \put(123,8){\circle*{1}}
  \put(99,26){\circle*{1}}
  \put(111,23){\circle*{1}}
  \put(105,32){\circle*{1}}
  \put(105,38){\circle*{1}}
  \put(105,14){\circle*{1}}
  \put(117,17){\circle*{1}}
  \end{picture}
 \end{center}
\caption{Instances of RT-diagrams for $n=4$ and $a=(1,2,1,1)$.}
\label{fig:RT}
  \end{figure}

All diagrams (considered as graphs) are subgraphs of the graph $G$ whose
vertex set $V(G)$ consists of the points
$\pi(x):=x_1\xi_1+\ldots+x_n\xi_n$ for $x\in B(a)$ and whose edge set
$E(G)$ is formed by the pairs $e=(\pi(x),\pi(x'))$ such that
$\pi(x')=\pi(x)+\xi_i$ for some $i\in[n]$; we say that $e$ is an $i$-{\em
edge}. (Due to condition (ii) above, all points $\pi(x)$ are different;
also $\pi(x)$ lies in $Z$.) Observe that $G$ is graded in each color, in
the sense that for all $i\in[n]$ and $u,v\in V(G)$, any paths from $u$ to
$v$ (when exist) have the same number of $i$-edges. The {\em height}
$h(v)$ of a vertex $v$ is the length (number of edges) of a path from the
minimal vertex $p_0$ to $v$.

Let $D$ be an RT-diagram. By a {\em dual path} in $D$ we mean a maximal
sequence $Q=(e_0,\rho_1,e_1,\ldots,\rho_d,e_d)$, where:
$\rho_1,\ldots,\rho_d$ are rhombi of $D$, consecutive rhombi are
different, and $e_{k-1},e_k$ are opposite edges in (the boundary of)
$\rho_k$, $k=1,\ldots,d$. Then all edges in $Q$ are $i$-edges for some
$i\in[n]$, and $Q$ connects the left boundary $L$ and the right boundary
$R$ of the zonogon $Z$. We say that $Q$ is a {\em dual $i$-path} and
assume that $Q$ is oriented so that its first edge $e_0$ belongs to $L$
(and $e_d$ belongs to $R$). So, for each $i\in[n]$, there are exactly
$a_i$ dual $i$-paths and these paths are pairwise disjoint. In particular,
if the first edge $e_0$ of a dual $i$-path $Q$ is a $q$th edge in the side
$p_{i-1}p_i$ of $Z$ (i.e., it connects the point $p_{i-1}+(q-1)\xi_i$ and
$p_{i-1}+q\xi_i$ for some $q\in\{1,\ldots,a_i\}$), then the last edge
$e_d$ of $Q$ is the $q$th edge in the side $p'_{i}p'_{i-1}$; we denote
such edges in $L$ and $R$ by $\ell_{i,q}$ and $r_{i,q}$, respectively.
Also one can see that a dual $i$-path intersects each dual $j$-path with
$j\ne i$ at exactly one rhombus.

In light of these observations and using planar duality, the RT-diagrams
can be associated with so-called wiring diagrams, or W-diagrams (giving
the de Bruijn dualization~\cite{dB}; we use the shorter name ``wire''
rather than ``de Bruijn line''). A {\em W-diagram} is represented by $|a|$
curves, or {\em wires}, $\sigma_{i,q}$ for $i=1,\ldots,n$ and
$q=1,\ldots,a_i$, such that
 \begin{numitem}
each $\sigma_{i,q}$ is identified with (the image of) a continuous
injective map $\sigma$ of the segment $[0,1]$ into $Z$ such that
$\sigma(0)=p_{i,q}$, $\sigma(1)=p'_{i,q}$, and $\sigma(t)$ lies in the
interior of $Z$ for $0<t<1$, where $p_{i,q}$ and $p'_{i,q}$ are the
mid-points of the boundary edges $\ell_{i,q}$ and $r_{i,q}$, respectively,
and the following conditions hold:
   \begin{itemize}
 \item[(a)] any two wires $\sigma_{i,q}$ and $\sigma_{i,q'}$ ($q\ne q'$)
are disjoint, i.e., there are no $0\le t,t'\le 1$ such that
$\sigma_{i,q}(t)=\sigma_{i,q'}(t')$;
 \item[(b)] any two wires $\sigma_{i,q}$ and $\sigma_{j,q'}$ with
$i\ne j$ have exactly one point in common, i.e., there are unique $t,t'$
such that $\sigma_{i,q}(t)=\sigma_{j,q'}(t')$ (these wires {\em cross},
not touch, at this point);
 \item[(c)] no three wires have a common point.
  \end{itemize}
  \label{eq:wires}
  \end{numitem}
Such a diagram is considered up to a homeomorphism of the zonogon $Z$
stable on its boundary, and when needed, we may assume that each wire is
piece-wise linear. Also we orient each wire $\sigma_{i,q}$ from $p_{i,q}$
to $p'_{i,q}$ and call it an $i$-{\em wire}.

There is a natural one-to-one correspondence between the RT- and
W-diagrams. More precisely, for an RT-diagram $D$ and a dual path
$Q=(\ell_{i,q}=e_0,\rho_1,e_1,\ldots,\rho_d,e_d=r_{i,q})$ in it, take as
wire $\sigma_{i,q}$ the concatenation of the segments connecting the
mid-points of edges $e_k,e_{k+1}$; then these wires form a W-diagram. (We
will call such a $\sigma_{i,q}$ the {\em median line} of $Q$.) Conversely,
given a W-diagram $W$, consider the common point $v$ of wires
$\sigma_{i,q}$ and $\sigma_{j,q'}$ with $i<j$ in $W$. For $k=1,\ldots,n$,
let $x_k$ be the number of wires $\sigma=\sigma_{k,q''}$ that {\em go
below} $v$, i.e., $v$ and $p_n$ occur in the same connected region when
the curve $\sigma$ is removed from $Z$. Then we associate to $v$ the
$ij$-rhombus with the minimal vertex at the point
$x_1\xi_1+\ldots+x_n\xi_n$. One can check that conditions (a)--(c)
in~\refeq{wires} provide that the set of rhombi obtained this way forms an
RT-diagram $D$ and, furthermore, that the W-diagram constructed from $D$
as explained above is equivalent to $W$.

Depending on the context, a W-diagram $W$ may also be regarded as a
directed planar graph $(V(W),E(W))$, which is a sort of dual graph of the
corresponding RT-diagram $D$. More precisely, $V(W)$ consists of the
intersection points of pairs of wires (corresponding to the rhombi of $D$)
plus the points $p_{i,q}$ and $p'_{i,q}$ for all $i,q$, and $E(W)$
consists of the wire parts between such vertices, with the orientation
inherited from that of the wires. An edge contained in an $i$-wire is
called an $i$-{\em edge}. Observe that:
  \begin{numitem}
for the intersection point $v$ of an $i$-wire and a $j$-wire with $i<j$,
the edges $e_i,e_j,e'_i,e'_j$ incident with $v$ follow in this order
clockwise around $v$, where $e_i,e'_i$ are the $i$-edges entering and
leaving $v$, respectively, and $e_j,e'_j$ are the $j$-edges entering and
leaving $v$, respectively.
  \label{eq:clockwise}
  \end{numitem}

The assertions in the rest of this section will be stated in terms of
RT-diagrams. In its turn, the language of W-diagrams will mainly be used
to simplify visualization and technical details in the proofs. In
particular, we will sometimes choose triples of wires in a W-diagram, an
$i$-wire $\sigma$, a $j$-wire $\sigma'$ and a $k$-wire $\sigma''$ for
$i<j<k$, and consider the region (curvilinear {\em triangle})
$T=T(\sigma,\sigma',\sigma'')$ in $Z$ bounded by the parts of these wires
between their intersection points. We will distinguish between two cases,
by saying that $\sigma,\sigma',\sigma''$ form the $\Delta$-{\em
configuration} if the point $\sigma\cap\sigma''$ (as well as the triangle
$T$) lies above $\sigma'$, and the $\nabla$-{\em configuration} if this
point lies below $\sigma'$; see Fig.~\ref{fig:config}.
\begin{figure}[htb]
   \begin{center}
  \unitlength=1mm
  \begin{picture}(120,25)(5,5)
  \put(5,10){\vector(1,0){50}}
  \put(10,0){\vector(1,1){30}}
  \put(20,30){\vector(1,-1){30}}
 \put(14,0){$i,\; \sigma$}
 \put(6,12){$j,\; \sigma'$}
 \put(12,25){$k,\; \sigma''$}
 \put(29,12){$T$}
 \put(-5,5){(a)}
  \put(75,20){\vector(1,0){50}}
  \put(90,0){\vector(1,1){30}}
  \put(80,30){\vector(1,-1){30}}
 \put(82,1){$i,\; \sigma$}
 \put(76,16){$j,\; \sigma'$}
 \put(85,27){$k,\; \sigma''$}
 \put(99,15){$T$}
 \put(70,5){(b)}
  \end{picture}
 \end{center}
\caption{(a) $\Delta$-configuration;\;\; (b) $\nabla$-configuration.}
\label{fig:config}
  \end{figure}

\subsection{RT-diagrams and bases} \label{ssec:RT-bases}

In this subsection we discuss relationships between RT-diagrams and
TP-bases in a box $B(a)$. For $S\subseteq B(a)$, let $G_S$ denote the
induced subgraph of $G$ with the vertex set $\pi(S)$ (the graph $G$ is
defined in the previous subsection). We are interested in those subsets
$S$ for which $G_S$ is an RT-diagram, in which case we say that $S$ is an
{\em RT-set}.

An important instance of RT-sets is the set $Int(a)$ of fuzzy-intervals
(including the zero vector), i.e., the standard basis for $B(a)$; see the
left part in Fig.~\ref{fig:RT} where the RT-diagram for $Int(1,2,1,1)$ is
drawn. (This fact can be shown by induction on $n$. Instruction: in case
$n=2$, $G_{Int(a)}$ is the direct product $L_{a_1}\times L_{a_2}$, where
$L_q$ is the path of length $q$; so it is an RT-diagram. In case $n>2$,
let $a'=(a_1,\ldots,a_{n-1})$ and assume by induction that
$G'=G_{Int(a')}$ is an RT-diagram. Let $R'$ be the right boundary of $G'$
(going from the minimal vertex $(0,0)$ to the maximal vertex
$a_1\xi_1+\ldots+a_{n-1}\xi_{n-1}$). Then $G_{Int(a)}$ is obtained by
taking the corresponding union of $G'$ and $R'\times L_{a_n}$.)

Let $\Mscr(a)$ be the set of bases which can be obtained from $Int(a)$ by
a series of TP3-mutations. At the first glance, it seems likely that each
member of $\Mscr(a)$ is an RT-set. However, this is not so. Indeed, in the
standard basis for $B(1,2,1,1)$, take the elements $1_1,1_1+1_2,1_2,
1_3,1_2+1_3$; their images are indicated by thick dots on the left picture
in Fig.~\ref{fig:RT}. These elements give rise to the TP3-mutation
$1_2\rightsquigarrow 1_1+1_3$ of $Int(a)$, but the resulting basis is
already not an RT-set. (By withdrawing the fourth coordinate, we obtain a
similar situation in the 3-dimensional box $B(1,2,1)$.) Another example,
with the Boolean cube $2^{[4]}$, is drawn in the picture below (here the
RT-diagram determines a basis $\Bscr$ but the mutation involving the sets
corresponding to the thick dots, namely, $\{3\}\rightsquigarrow\{1,4\}$,
results in a basis which is not an RT-set).

 \begin{center}
  \unitlength=1mm
  \begin{picture}(70,25)
  \put(41,0){\vector(-4,1){11.5}}
  \put(29,3){\vector(-2,3){5.7}}
  \put(23,12){\vector(2,3){5.7}}
  \put(29,21){\vector(4,1){11.5}}
  \put(53,21){\vector(-4,1){11.5}}
  \put(59,12){\vector(-2,3){5.7}}
  \put(53,3){\vector(2,3){5.7}}
  \put(41,0){\vector(4,1){11.5}}
  \put(35,12){\vector(-2,3){5.7}}
  \put(47,9){\vector(-2,3){5.7}}
  \put(47,9){\vector(-4,1){11.5}}
  \put(41,18){\vector(-4,1){11.5}}
  \put(29,3){\vector(2,3){5.7}}
  \put(41,0){\vector(2,3){5.7}}
  \put(41,18){\vector(4,1){11.5}}
  \put(47,9){\vector(4,1){11.5}}
  \put(32,-2){$\xi_1$}
  \put(22,6){$\xi_2$}
  \put(46,4){$\xi_3$}
  \put(48,-2){$\xi_4$}
\put(29,3){\circle*{2}}
  \put(35,12){\circle*{2}}
  \put(47,9){\circle*{2}}
  \put(53,3){\circle*{2}}
  \put(59,12){\circle*{2}}
  \put(41,0){\circle*{1}}
  \put(23,12){\circle*{1}}
  \put(41,18){\circle*{1}}
  \put(29,21){\circle*{1}}
  \put(53,21){\circle*{1}}
  \put(53,3){\circle*{1}}
  \put(41,0){\circle*{1}}
  \put(41,24){\circle*{1}}
    \end{picture}
   \end{center}

So, to get bases corresponding to RT-diagrams, we have to restrict the
class of mutations that we apply. Consider a basis $\Bscr$ and a cortege
$(x,i,j,k)$ such that the vectors involving in~\refeq{GP3}, except for one
vector $y\in\{x+1_j,x+1_i+1_k\}$, belong to $\Bscr$. We say that the
mutation $y\rightsquigarrow y'$ (where $\{y,y'\}=\{x+1_j,x+1_i+1_k\}$) is
{\em normal} if both vectors $x$ and $x+1_i+1_j+1_k$ belong to $\Bscr$ as
well. In this case, for $s:=\pi(x)$, the six points
   \begin{equation}  \label{eq:hex}
s,\;\; u:=s+\xi_i,\;\; v:=s+\xi_i+\xi_j,\;\; w:=s+\xi_k,\;\;
z:=s+\xi_j+\xi_k,\;\; t:=s+\xi_i+\xi_j+\xi_k,
    \end{equation}
along with the six edges connecting them, form a (little) {\em hexagon} in
$G_\Bscr$, denoted by $H(s,i,j,k)$. This hexagon $H$ is partitioned into
three rhombi in $D$, either by use of edges $sy,yv,yz$ or by use of edges
$uy,wy,yt$, where $y$ is the unique vertex of $D$ in the interior of $H$;
we refer to $H$ as a $\curlyvee$-{\em hexagon} in the former case, and a
$\curlywedge$-{\em hexagon} in the latter case, see
Fig.~\ref{fig:hexagon}.

\begin{figure}[htb]
 \begin{center}
  \unitlength=1mm
  \begin{picture}(90,25)(0,0)
  \put(10,5){\circle*{1}}
  \put(10,17){\circle*{1}}
  \put(20,0){\circle*{1}}
  \put(20,12){\circle*{1}}
  \put(20,22){\circle*{1}}
  \put(30,5){\circle*{1}}
  \put(30,17){\circle*{1}}
  \put(10,5){\vector(0,1){11.5}}
  \put(20,0){\vector(0,1){11.5}}
  \put(30,5){\vector(0,1){11.5}}
  \put(20,0){\vector(-2,1){9.7}}
  \put(20,12){\vector(-2,1){9.7}}
  \put(30,17){\vector(-2,1){9.7}}
  \put(10,17){\vector(2,1){9.7}}
  \put(20,0){\vector(2,1){9.7}}
  \put(20,12){\vector(2,1){9.7}}
  \put(21,-2){$s$}
  \put(7,3){$u$}
  \put(7,17){$v$}
  \put(21,23){$t$}
  \put(31,3){$w$}
  \put(31,18){$z$}
  \put(19,14){$y$}
  \put(0,0){(a)}
  \put(70,5){\circle*{1}}
  \put(70,17){\circle*{1}}
  \put(80,0){\circle*{1}}
  \put(80,10){\circle*{1}}
  \put(80,22){\circle*{1}}
  \put(90,5){\circle*{1}}
  \put(90,17){\circle*{1}}
  \put(70,5){\vector(0,1){11.5}}
  \put(80,10){\vector(0,1){11.5}}
  \put(90,5){\vector(0,1){11.5}}
  \put(80,0){\vector(-2,1){9.7}}
  \put(90,5){\vector(-2,1){9.7}}
  \put(90,17){\vector(-2,1){9.7}}
  \put(70,17){\vector(2,1){9.7}}
  \put(80,0){\vector(2,1){9.7}}
  \put(70,5){\vector(2,1){9.7}}
  \put(60,0){(b)}
  \end{picture}
 \end{center}
\caption{(a) $\curlyvee$-hexagon;\;\; (b) $\curlywedge$-hexagon.}
 \label{fig:hexagon}
  \end{figure}

It is useful to describe the difference between $\curlyvee$- and
$\curlywedge$-hexagons of $D$ in terms of the corresponding wires in the
W-diagram $W$ related to $D$. More precisely, given a hexagon
$H=H(s,i,j,k)$, let $\sigma,\sigma',\sigma''$ be the wires in $W$
corresponding to the dual $i$-, $j$- and $k$-paths, respectively, that
meet rhombi in $H$. It is easy to see that these wires form the
$\Delta$-configuration if $H$ is a $\curlyvee$-hexagon, and the
$\nabla$-configuration if $H$ is a $\curlywedge$-hexagon (cf.
Figures~\ref{fig:config} and~\ref{fig:hexagon}). Moreover, in both cases
the triangle $T(\sigma,\sigma',\sigma'')$ is {\em inseparable}, which
means that none of the other wires in $W$ goes across this triangle. A
converse property also takes place: if the triangle for three wires
$\sigma,\sigma',\sigma''$ (concerning different $i,j,k\in[n]$) is
inseparable, then the three rhombi in $D$ corresponding to the points
where these wires intersect are assembled in a hexagon.

Let us call a basis for $B(a)$ {\em normal} if it can be obtained from the
standard basis $Int(a)$ by a series of normal TP3-mutations. We show the
following
  \begin{prop} \label{pr:diag}
Each normal basis induces an RT-diagram, and vice versa.
  \end{prop}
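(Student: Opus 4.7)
The plan is to prove both directions by means of a local correspondence: a normal TP3-mutation at a cortege $(x,i,j,k)$ corresponds exactly to flipping the tiling of the hexagon $H(s,i,j,k)$ (where $s=\pi(x)$) between its $\curlyvee$- and $\curlywedge$-presentations, which are the only two rhombic tilings of that hexagon.

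First I would handle the implication ``normal basis $\Rightarrow$ RT-set'' by induction on the number of normal TP3-mutations carrying the standard basis $Int(a)$ to $\Bscr$. The base case is the observation already noted above that $G_{Int(a)}$ is an RT-diagram. For the inductive step, suppose $\Bscr'$ is an RT-set and $\Bscr$ is obtained from $\Bscr'$ by a single normal mutation $y\rightsquigarrow y'$ based on a cortege $(x,i,j,k)$. By normality the six outer vertices $s,u,v,w,z,t$ of~\refeq{hex} lie in $\pi(\Bscr')$, so they are vertices of the RT-diagram $D'=G_{\Bscr'}$; since $y\in\Bscr'$ while $y'\notin\Bscr'$, the point $\pi(y)$ is the unique candidate for an interior vertex of $D'$ inside $H=H(s,i,j,k)$, and hence $D'$ must tile $H$ by the three rhombi sharing $\pi(y)$. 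Re-tiling $H$ the opposite way replaces $\pi(y)$ by $\pi(y')$ and leaves the rest of $D'$ intact, yielding an RT-diagram on vertex set $\pi(\Bscr)$.

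For the converse implication, I would appeal to the classical flip-connectivity theorem for rhombic tilings of a zonogon (cf.~\cite{ER}): any two such tilings are joined by a finite sequence of hexagon flips, each exchanging a $\curlyvee$-hexagon for a $\curlywedge$-hexagon. Given an arbitrary RT-diagram $D$, take such a sequence from $G_{Int(a)}$ to $D$, and read it off as a sequence of transformations $Int(a)=\Bscr_0,\Bscr_1,\ldots,\Bscr_N$ on subsets of $B(a)$. Each step swaps the interior vertex of a hexagon of the current diagram, and by the characterization of $\curlyvee$- and $\curlywedge$-hexagons in terms of inseparable $\Delta$- and $\nabla$-wire configurations (Fig.~\ref{fig:config}), such a swap is precisely a normal TP3-mutation. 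Hence $\Bscr_N$ (the vertex set of $D$) is reached from $Int(a)$ by normal mutations, so it is a normal basis whose induced RT-diagram is $D$.

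The main obstacle is to verify two essentially geometric points that are pictorially obvious but need short justifications: (i) in the inductive step of the first direction, that no vertex of $\Bscr'$ other than $\pi(y)$ lies in the interior of $H$, so that $D'$ indeed tiles $H$ by the expected three rhombi; and (ii) the flip-connectivity of rhombic tilings of $Z(a)$ in the precise form required, most transparently argued via the dual W-diagram using a local three-wire move at inseparable triangles, showing that one can always transform one tiling into another by $\Delta\leftrightarrow\nabla$ flips without obstruction.
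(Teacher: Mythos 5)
Your forward direction coincides with the paper's: it, too, argues that the standard basis is an RT-set and that a normal mutation amounts to re-tiling the hexagon $H(s,i,j,k)$ the opposite way (the paper states this even more tersely than you do, so your extra care about the interior vertex is welcome but not a divergence). The real difference is in the converse. The paper does not quote flip-connectivity of rhombic tilings as a known theorem -- establishing it is the entire content of its proof. It inducts on the total height $h(D)=\sum_v h(v)$: if $D$ has a $\curlywedge$-hexagon, flip it down and recurse; the crux is then showing that a tiling with no $\curlywedge$-hexagon is the standard one, done in the W-diagram either by peeling off the last $n$-wire when no two wires cross in the region $\Omega$ between it and the right boundary (induction on $|a|$), or by choosing a $\nabla$-configuration maximizing the number $\eta$ of wires above its triangle and, subject to that, of minimal area, and proving that triangle inseparable via the case analysis (a)--(e), which produces a $\curlywedge$-hexagon and a contradiction. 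Your route -- cite flip-connectivity and observe that each hexagon flip is a normal TP3-mutation -- is legitimate and is in fact the alternative proof the paper itself attributes to Henriques--Speyer \cite{HS} in Remark~4 (every rhombus tiling reaches the minimal one by downward flips); but note that \cite{ER} is cited in the paper only as general background, and the statement you need must hold for zonogons with multiplicities $a_i>1$, i.e.\ for W-diagrams in which wires of the same colour never cross and with fixed boundary data. If that precise form is not available and you fall back on your sketched ``local three-wire move at inseparable triangles,'' you are essentially re-deriving the paper's Claim, which is the nontrivial part. Two minor points: for each flip in your sequence to be a normal mutation of a \emph{basis}, you need each intermediate vertex set to be a TP-basis, which holds because a TP3-mutation of a basis is again a basis (as noted in the Introduction); and your obstacle (i) is handled no more rigorously in the paper than in your sketch, so nothing is lost there. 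What the paper's self-contained argument buys is exactly this connectivity statement (and, as a by-product, monotone reachability of the minimal tiling used in Remark~3); what your version buys is brevity, at the cost of a citation whose scope must be checked.
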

  \begin{proof}
Let $\Bscr$ be a basis such that $D:=G_\Bscr$ is an RT-diagram. Then a
normal mutation in $\Bscr$ corresponds to a transformation within some
hexagon $H$ of $D$. This transformation changes the partition of $H$ into
three rhombi by the other partition of $H$ into three rhombi, and we again
obtain an RT-diagram. This implies that each normal basis is an RT-set
(since the standard basis is such).

Next we prove the other direction in the proposition: each RT-diagram $D$
corresponds to a normal basis.

To show this, we use induction on the parameter $h(D):=\sum_{v\in V(D)}
h(v)$, the (total) {\em height of $D$}. Suppose $D$ contains a
$\curlywedge$-hexagon $H$. Then the transformation of $H$ into a
$\curlyvee$-hexagon (which matches a normal mutation) results in an
RT-diagram $D'$ such that $h(D')=h(D)-1$ (as the distance from the minimal
vertex of $H$ to the vertex of the diagram occurring in the interior of
$H$ changes from 2 to 1). By induction, assuming that $D'$ corresponds to
a normal basis, a similar property takes place for $D'$.

To complete the induction, it remains to consider the situation when no
$\curlywedge$-hexagon exists. We assert that
   \begin{numitem}
if $D$ contains no $\curlywedge$-hexagon, then $D$ corresponds to the
standard basis.
  \label{eq:minD}
   \end{numitem}

To show this, consider the W-diagram $W$ related to $D$ and take the wire
$\tau$ in $W$ that goes from the point $p_{n,a_n}$ to $p'_{n,a_n}$. Let
$\Omega$ be the region in $Z$ between $\tau$ and the right boundary $R$.
Two cases are possible.

  \medskip
{\em Case 1}\,: No two wires in $W$ intersect within the interior of
$\Omega$. This means that the dual $n$-path $Q$ in $D$ beginning at the
last edge $\ell_{n,a_n}$ of $L$ and ending at the edge $r_{n,a_n}$ {\em
follow the right boundary} $R$ of $Z$. Then we can reduce the diagram $D$
by removing $R$ and (the interiors of the elements of) $Q$. This gives a
correct RT-diagram for the tuple $a'=(a_1,\ldots,a_{n-1},a_n-1)$,
and~\refeq{minD} follows by applying induction on $|a|$. (When $a_n=1$,
the $n$th entry of $a'$ vanishes.)

  \medskip
{\em Case 2}\,: There are two wires in $W$ that intersect in the interior
of $\Omega$, say, an $i'$-wire $\mu$ and a $j'$-wire $\nu$. Then $i',j',n$
are different. Let for definiteness $i'<j'$. Then the end point of $\mu$
occurs in $R$ later than the end point of $\nu$. This implies that the
point $\mu\cup\tau$ lies below $\nu$, i.e., $\mu,\nu,\tau$ form the
$\nabla$-configuration.

So $W$ contains three wires $\sigma,\sigma',\sigma''$ forming the
$\nabla$-configuration. Let $\eta(\sigma,\sigma',\sigma'')$ denote the
number of wires $\bar\sigma\in W$ such that (the interior of) the triangle
$T=T(\sigma,\sigma',\sigma'')$ lies entirely below $\bar\sigma$. Choose
such wires $\sigma,\sigma',\sigma''$ so that
$\eta(\sigma,\sigma',\sigma'')$ is maximum and, subject to this, the area
of their triangle $T$ is as small as possible. Let for definiteness
$\sigma,\sigma',\sigma''$ be, respectively, $i$-, $j$- and $k$-wires with
$i<j<k$, and let $I,J,K$ denote the $i$-, $j$- and $k$-sides of $T$. (In
view of~\refeq{clockwise}, $J,K$ are directed from the point
$\sigma'\cap\sigma''$, and $I$ is directed from $\sigma\cap\sigma''$.)

 \medskip
 \noindent
 {\bf Claim.} {\em The triangle $T$ is inseparable.}

 \medskip
 \noindent
 {\em Proof of the Claim.}
Suppose there exists a $p$-wire $\hat\sigma$ going across $T$. Then
$\hat\sigma$ crosses two sides among $I,J,K$. We consider five possible
cases and use property~\refeq{clockwise}.

 \smallskip
(a) $\hat\sigma$ first crosses $K$ and then crosses $J$. Then $p<j<k$.
Form the triple $\Sigma:=\{\hat\sigma,\sigma',\sigma''\}$.

 \smallskip
(b) $\hat\sigma$ first crosses $K$ and then crosses $I$. Then $i<p<k$.
Form $\Sigma:=\{\sigma,\hat\sigma,\sigma''\}$.

 \smallskip
(c) $\hat\sigma$ first crosses $J$ and then crosses $I$. Then $i<j<p$.
Form $\Sigma:=\{\sigma,\sigma',\hat\sigma\}$.

 \smallskip
(d) $\hat\sigma$ first crosses $I$ and then crosses $J$. Then $p<i<k$.
Form $\Sigma:=\{\hat\sigma,\sigma,\sigma''\}$.

 \smallskip
(e) $\hat\sigma$ first crosses $J$ and then crosses $K$. Then $i<k<p$.
Form $\Sigma:=\{\sigma,\sigma'',\hat\sigma\}$.

 \smallskip
(The case when $\hat\sigma$ first crosses $I$ and then crosses $K$ is
impossible, otherwise we would have $p<i$ and $k<p$,
by~\refeq{clockwise}.) Let $T'$ denote the triangle of the triple
$\Sigma$. Cases (a)--(e) are illustrated in the picture.
   \begin{center}
  \unitlength=1mm
  \begin{picture}(150,30)(0,0)

 \begin{picture}(42,28)(0,0)
  \put(4,20){\vector(1,0){40}}
  \put(17,0){\vector(1,1){25}}
  \put(5,25){\vector(1,-1){25}}
 \put(15,0){$i$}
 \put(3,16){$j$}
 \put(6,24){$k$}
  \put(10,7){\vector(3,2){25}}
 \put(8,8){$p$}
 \put(17,15){$T'$}
 \put(0,0){(a)}
 \end{picture}
 \begin{picture}(42,28)(-7,0)
  \put(4,20){\vector(1,0){40}}
  \put(17,0){\vector(1,1){25}}
  \put(5,25){\vector(1,-1){25}}
 \put(15,0){$i$}
 \put(3,16){$j$}
 \put(6,24){$k$}
  \put(10,10){\vector(4,1){30}}
 \put(8,11){$p$}
 \put(23,9){$T'$}
 \put(0,0){(b)}
 \end{picture}
 \begin{picture}(42,28)(-14,0)
  \put(4,20){\vector(1,0){40}}
  \put(17,0){\vector(1,1){25}}
  \put(5,25){\vector(1,-1){25}}
 \put(15,0){$i$}
 \put(3,16){$j$}
 \put(6,24){$k$}
  \put(18,27){\vector(2,-3){15}}
 \put(20,25){$p$}
 \put(28,15){$T'$}
 \put(0,0){(c)}
 \end{picture}

  \end{picture}
 \end{center}

   \begin{center}
  \unitlength=1mm
  \begin{picture}(110,30)(0,0)
 \begin{picture}(42,28)(0,0)
  \put(4,20){\vector(1,0){42}}
  \put(15,3){\vector(3,2){30}}
  \put(5,23){\vector(3,-2){32}}
 \put(12,1){$i$}
 \put(3,16){$j$}
 \put(5,24){$k$}
  \put(31,0){\vector(1,4){7}}
 \put(28,0){$p$}
 \put(29,8){$T'$}
 \put(0,0){(d)}
 \end{picture}
 \begin{picture}(42,28)(-20,0)
  \put(4,20){\vector(1,0){42}}
  \put(15,3){\vector(3,2){30}}
  \put(5,23){\vector(3,-2){30}}
 \put(12,1){$i$}
 \put(3,16){$j$}
 \put(5,24){$k$}
  \put(12,28){\vector(1,-4){7}}
 \put(14,26){$p$}
 \put(18,8){$T'$}
 \put(0,0){(e)}
 \end{picture}
  \end{picture}
 \end{center}

In all cases, $\Sigma$ forms the $\nabla$-configuration. In cases
(a)--(c), the triangle $T'$ lies inside $T$, which implies that
$\eta(\Sigma)\ge \eta(\sigma,\sigma',\sigma'')$ and that the area of $T'$
is smaller than that of $T$. In cases (d),(e), it is easy to see that when
$T$ lies below some wire, then so does $T'$. Also, in case (d) (resp.
(e)), $T'$ lies below $\sigma$ (resp. $\sigma''$), whereas $T$ does not.
Therefore, in these two cases, $\eta(\Sigma)>
\eta(\sigma,\sigma',\sigma'')$. So we come to a contradiction with the
choice of $\sigma,\sigma',\sigma''$, yielding the Claim. \qed

  \medskip
By the Claim, the three rhombi corresponding to the vertices of $T$ form a
$\curlywedge$-hexagon in $D$. This contradiction completes the proof
of~\refeq{minD} and of the proposition.
  \end{proof}

 \noindent
{\bf Remark 3.} As is seen from the above proof, the diagram $D$
corresponding to the standard basis $Int(a)$ is {\em minimal} in the sense
that the height $h(D)$ of vertices of $D$ is minimum among all
RT-diagrams. Applying the central symmetry to $D$ and reversing all the
edges, we obtain the RT-diagram $D^\ast$ having the maximum height. The
basis corresponding to $D^\ast$ is formed by the ``complementary'' vectors
$a-x$ of the fints $x$ ({\em co-fints}). By Proposition~\ref{pr:diag},
this ``complementary basis'' $Int^\ast(a)$ is normal as well. For a
similar reason, any normal basis $\Bscr$ and its complementary basis
$\Bscr^\ast:=\{x: a-x\in\Bscr\}$ are connected by a series of normal
mutations.

  \medskip
It is not clear to us whether, for some tuple $a$, there exists a TP-basis
beyond $\Mscr(a)$.

\subsection{Normal bases including a given set.} \label{ssec:til-set}

An interesting open question is to characterize the subsets of $B(a)$ that
can be extended to a TP-basis. In this subsection we consider a different
but somewhat related problem:
  \begin{numitem}
Given a set $X\subset B(a)$, decide whether there exists a normal basis
for $B(a)$ including $X$, or, equivalently, an RT-diagram $D$ whose vertex
set contains all points of $\pi(X)$ (the image of $X$ in the zonogon
$Z(a)$).
  \label{eq:problem}
  \end{numitem}

As a variant of such a problem (in fact, equivalent to~\refeq{problem}),
one is given a set of (not necessarily disjoint) subzonogons in $Z(a)$,
with possible rhombic tilings on some of them, and is asked of their
extendability to a tiling on $Z(a)$.

It is useful to reformulate~\refeq{problem} in terms of wiring diagrams.
Then each point $x=(x_1,\ldots,x_n)\in X$ imposes a requirement on the set
of wires {\em going below} (the image of) $x$ in a W-diagram
$W=\{\sigma_{i,q}\}$, assuming that the desired RT-diagram does exist and
regarding the wires in $W$ as being realized by the median lines of the
dual paths. More precisely, if~\refeq{problem} has a solution, then
  \begin{numitem}
for each $i=1,\ldots,n$, the wires $\sigma_{i,1},\ldots,\sigma_{i,x_i}$
should go below $x$, while the other $i$-wires above $x$.
  \label{eq:below-above}
  \end{numitem}

Consider three wires $\sigma=\sigma_{i,q}$, $\sigma'=\sigma_{j,q'}$ and
$\sigma''=\sigma_{k,q''}$ with $i<j<k$. Suppose there is a point $x\in X$
such that $x_i<q$, $x_j\ge q'$ and $x_k<q''$. Then~\refeq{below-above}
forces $\sigma,\sigma',\sigma''$ to form the $\Delta$-configuration.
Another possible situation is when $x_i\ge q$, $x_j< q'$ and $x_k\ge q''$.
In this case, by similar reasons, the point $x$ prescribes the
$\nabla$-configuration for $\sigma,\sigma',\sigma''$.

These observations lead us to the following conclusion:
problem~\refeq{problem} has no solution if
  \begin{numitem}
there are $1\le i<j<k\le n$ and two points $x,x'\in X$ such that
$x_i<x'_i$, $x_j>x'_j$ and $x_k<x'_k$.
  \label{eq:obstacle}
  \end{numitem}
The simplest example is given by the points $2$ and $13$ of the Boolean
cube $2^{[3]}$, which cannot simultaneously occur in one and the same
RT-diagram.

It turns out that~\refeq{obstacle} fully describes obstacles to the
solvability of the problem.

  \begin{theorem} \label{tm:X-tiling}
Problem~\refeq{problem} has solution if and only if~\refeq{obstacle} does
not take place.
  \end{theorem}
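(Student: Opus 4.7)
\emph{Proof plan.}
Necessity follows directly from the observations preceding the theorem. Suppose the obstacle is realized by $x,x'\in X$ and indices $i<j<k$. Set $q:=x'_i$, $q':=x_j$, $q'':=x'_k$. Applying~\refeq{below-above} at $\pi(x)$ gives $x_i<q$, $x_j\ge q'$ and $x_k<q''$, which forces $\sigma_{j,q'}$ to pass below $\pi(x)$ while $\sigma_{i,q}$ and $\sigma_{k,q''}$ pass above it; this pins the triangle $T(\sigma_{i,q},\sigma_{j,q'},\sigma_{k,q''})$ to the $\Delta$-configuration. Applying~\refeq{below-above} at $\pi(x')$ yields the reverse inequalities, forcing the same triangle into the $\nabla$-configuration. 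Since a triple of wires admits at most one configuration in any W-diagram, no RT-diagram can satisfy both requirements.

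For sufficiency, I plan to argue by induction, starting from the standard RT-diagram $D_0$ (whose vertex set is $Int(a)$) and successively applying normal TP3-flips in order to incorporate the remaining points of $\pi(X)$ as vertices of the running diagram $D$, while never discarding any prescribed point already present. Define the defect of a current diagram $D$ with respect to $X$ as
\[
 \delta(D,X):=\sum_{x\in X,\ \pi(x)\notin V(D)}
   \bigl(\text{minimum number of normal flips needed to insert }\pi(x)\bigr),
\]
which is finite since, by Proposition~\ref{pr:diag}, any two RT-diagrams for $B(a)$ are connected by a sequence of normal flips. The central claim to establish is: whenever $\delta(D,X)>0$, there exists a $\curlywedge$-hexagon $H$ of $D$ whose flip simultaneously (a)~preserves every $\pi(y)$ with $y\in X$ already lying in $V(D)$, and (b)~strictly decreases $\delta(D,X)$.

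The main technical obstacle lies in producing such an admissible progress-making flip, which I would attack as follows. Pick any $x\in X$ with $\pi(x)\notin V(D)$. In the W-diagram $W$ associated with $D$, the fact that $\pi(x)$ is not a vertex is witnessed, via~\refeq{below-above}, by at least one triple of wires $(\sigma,\sigma',\sigma'')$ forming a $\nabla$-triangle whose interior contains $\pi(x)$. Among all such triples, pick one whose triangle $T$ has minimal area; by an argument analogous to the Claim in the proof of Proposition~\ref{pr:diag} (exploiting the five subcases (a)--(e) of a transverse wire), $T$ is then inseparable and hence corresponds to a $\curlywedge$-hexagon $H$ of $D$, whose flip strictly reduces the defect contribution of $x$. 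Admissibility is where the no-obstacle condition enters: the flip removes only the interior vertex of $H$, and if that vertex coincided with some $\pi(y)$, $y\in X$, then the configurations that $x$ and $y$ force on the three wires of $T$ via~\refeq{below-above} could be read off as indices $i<j<k$ and points $x,y\in X$ exhibiting the forbidden pattern of~\refeq{obstacle}, contradicting the hypothesis. Iterating drives the defect to zero, yielding the desired RT-diagram whose vertex set contains $\pi(X)$, equivalently, a normal basis of $B(a)$ including $X$.
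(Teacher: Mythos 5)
Your necessity argument is fine and coincides with the paper's. The sufficiency plan, however, is only a sketch of a genuinely different (flip-based) strategy, and its central steps do not go through as stated. First, the witnessing claim is unproved: you assert that if $\pi(x)\notin V(D)$ then some triple of wires relevant to $x$ forms a $\nabla$-triangle ``containing $\pi(x)$''. Two problems: (a) the location of the fixed point $\pi(x)$ relative to the wires is not canonical (a W-diagram is only defined up to homeomorphism), so ``contains $\pi(x)$'' is not meaningful -- what you need is that the below/above constraints of~\refeq{below-above} imposed by $x$ are inconsistent with the current diagram via some triple in the wrong configuration, and this Helly-type statement requires proof; (b) the wrong configuration can equally well be a $\Delta$-triangle (when $x$ demands above--below--above), in which case the corrective flip is an upward one, $\curlyvee\rightsquigarrow\curlywedge$. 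This matters because your iteration starts at the standard diagram, which by~\refeq{minD} contains no $\curlywedge$-hexagon at all, so a procedure that only flips $\curlywedge$-hexagons cannot even begin, while points of $X$ outside $Int(a)$ certainly require upward flips. Once flips in both directions are allowed, the height monovariant is gone and your termination argument collapses onto the defect function $\delta$, which is itself problematic: you never show that the chosen flip decreases even the single term for $x$ (that would require knowing a geodesic flip sequence passes through this hexagon), and a flip that helps $x$ may increase the terms of other missing points, so the sum need not decrease.

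Second, the reduction to a flippable hexagon is not a routine adaptation of the Claim in Proposition~\ref{pr:diag}. There the extremal choice is lexicographic (first maximize the number $\eta$ of wires entirely above the triangle, then minimize area), precisely because in cases (d),(e) the replacement triangle is not contained in the old one; minimal area alone does not yield inseparability. Worse, in your setting the replacement triple involves the transverse wire $\hat\sigma$, whose index and level need have nothing to do with $x$'s constraints, so the new triangle may no longer be a witness for $x$. But both your admissibility argument (deriving~\refeq{obstacle} if the flipped interior vertex were some $\pi(y)$, $y\in X$) and your progress claim rely on the flipped triangle being a witness for $x$; once that is lost, neither survives. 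By contrast, the paper avoids flips entirely: it builds the wiring diagram wire by wire, placing each point of $X$ in a face consistent with~\refeq{below-above}, forming the ideal region $\hat\rho$ swept out by the regions $\rho(f)$ of faces holding points of $\Xdown$, proving via the no-obstacle hypothesis that $\hat\rho$ meets no point of $\Xup$, and routing the new wire along the boundary of $\hat\rho$. If you want to salvage a flip-based proof you would need, at a minimum, a correct witness lemma covering both configurations, an extremal choice that keeps the witness property while forcing inseparability, and a monovariant compatible with flips in both directions; none of these is supplied.
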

  \begin{proof}
We have to show the solvability of~\refeq{problem} if no $i,j,k,x,x'$ as
in~\refeq{obstacle} exist (the other direction in the theorem has been
explained). Our proof is constructive and provides a polynomial time
algorithm of finding a required tiling.

Let $k\in[n]$ and $q\in[a_k]$ and suppose that the wires $\sigma_{i,p}$
are already constructed for all $(i,p)$ such that either $i<k$, or $i=k$
and $p<q$. This means that the W-diagram $W'$ formed by these wires is
realized by use of a directed planar graph $H=(V(H),E(H))$ embedded in $Z$
(according to~\refeq{wires}) and that the requirements as
in~\refeq{below-above} are satisfied; we add to $H$ the boundary of $Z$ as
well. So $V(H)$ consists of the intersection points of wires plus the
points $p_{j,r}$ and $p'_{j,r}$ for all $j\in[n]$ and $r\in[a_j]$. Let
$F(H)$ be the set of inner faces of $H$. We also (conditionally) place
each point $x\in X$ in the interior of some face $f\in F(H)$, which is
chosen according to~\refeq{below-above}, i.e., for each
$\sigma=\sigma_{i,p}\in W'$, $x$ occurs below $\sigma$ if $x_i\le p$, and
above $\sigma$ otherwise (so $f$ is chosen uniquely for $x$).

The faces in $F(H)$ containing the minimal vertex $p_0$ and the maximal
vertex $p_n$ of $Z$ are denoted by $f^{\min}$ and $f^{\max}$,
respectively. It follows from~\refeq{clockwise} that $H$ is acyclic. When
an edge $e$ is an $i$-edge (a part of an $i$-wire), we also say that $e$
has {\em index} $i$.

Our aim is to add wire $\sigma=\sigma_{k,q}$ to $W'$ by transforming $H$
in a due way. Let $\Xdown$ be the set of points of $x\in X$ with $x_k\le
q$, and $\Xup:=X-\Xdown$ (the sets of points that should lie below and
above $\sigma$, respectively).

Consider a face $f$. Its boundary $\partial f$ has two vertices $v,v'$
such that $\partial f$ is formed by two (directed) paths beginning at $v$
and ending at $v'$; we call $v,v'$ the {\em minimal} and {\em maximal}
vertices of $f$ and denote them by $v^{\min}(f)$ and $v^{\max}(f)$,
respectively. (When $\partial f$ has no common edge with the boundary
$\partial Z$ of $Z$, this fact can be seen by considering the vertex $w$
corresponding to the face $f$ in the RT-diagram $D'$ related to $W'$. Then
one path in $\partial f$ corresponds to the sequence (in the clockwise
order) of the edges in $D'$ leaving $w$, and the other path to the
sequence of edges entering $w$. When $\partial f$ meets an edge in
$\partial Z$, reasonings are easy as well.)

Let $f$ contain a point $x\in\Xdown$. The fact that $H$ is acyclic implies
that there is a path $P=P_0(f)$ in $H$ such that: (a) $P$ begins at a
vertex $v_0$ of $L$, ends at $v^{\min}(f)$ and has all intermediate points
not in $L$, and (b) for any edge $e=(u,v)$ of $P$ and for the other edge
$e'$ of $H$ entering $v$, the index of $e$ is less than the index of $e'$.
Such a path is constructed uniquely. Symmetrically, there is a path
$P'=P_1(f)$ such that: (a') $P'$ begins at a vertex $v_1$ of $L$, ends at
$v^{\min}(f)$ and has all intermediate points not in $L$, and (b') for any
edge $e=(u,v)$ of $P'$ and for the other edge $e'$ of $H$ entering $v$,
the index of $e$ is greater than the index of $e'$.

Clearly $v_0$ occur in $L$ earlier than $v_1$ (and one may say that
$P_0(f)$ goes below $P_1(f)$). Let $\rho(f)$ denote the closed region
bounded by these paths and the path in $L$ from $v_0$ to $v_1$. See the
picture for illustration.
 \begin{center}
  \unitlength=1mm
  \begin{picture}(130,60)

  \put(10,25){\circle*{1}}
  \put(10,35){\circle*{1}}
  \put(15,15){\circle*{1}}
  \put(15,45){\circle*{1}}
  \put(25,5){\circle*{1}}
  \put(25,55){\circle*{1}}
  \put(40,15){\circle*{1}}
  \put(40,45){\circle*{1}}
  \put(70,25){\circle*{1}}
  \put(70,35){\circle*{1}}
  \put(90,30){\circle*{1}}
  \put(110,25){\circle*{1}}
  \put(110,35){\circle*{1}}
  \put(120,25){\circle*{1}}
  \put(120,35){\circle*{1}}
  \put(125,30){\circle*{1}}
  \put(20,30){\circle*{1}}
  \put(25,25){\circle*{1}}
  \put(25,35){\circle*{1}}
  \put(35,25){\circle*{1}}
  \put(35,35){\circle*{1}}
  \put(40,30){\circle*{1}}
  \put(35,0){\vector(-2,1){9.5}}
  \put(25,5){\vector(-1,1){9.7}}
  \put(15,15){\vector(-1,2){4.7}}
  \put(10,25){\vector(0,1){9.5}}
  \put(10,35){\vector(1,2){4.7}}
  \put(15,45){\vector(1,1){9.7}}
  \put(25,55){\vector(2,1){9.5}}
  \put(25,5){\vector(3,2){14.5}}
  \put(25,55){\vector(3,-2){14.5}}
  \put(40,15){\vector(3,1){29.5}}
  \put(40,45){\vector(3,-1){29.5}}
  \put(70,25){\vector(4,1){19.5}}
  \put(70,35){\vector(4,-1){19.5}}
  \put(90,30){\vector(4,1){19.5}}
  \put(90,30){\vector(4,-1){19.5}}
  \put(110,25){\vector(1,0){9.5}}
  \put(110,35){\vector(1,0){9.5}}
  \put(120,25){\vector(1,1){4.7}}
  \put(120,35){\vector(1,-1){4.7}}
  \put(20,30){\vector(1,1){4.7}}
  \put(20,30){\vector(1,-1){4.7}}
  \put(25,25){\vector(1,0){9.5}}
  \put(25,35){\vector(1,0){9.5}}
  \put(35,25){\vector(1,1){4.7}}
  \put(35,35){\vector(1,-1){4.7}}
  \put(5,20){$L$}
  \put(25,29){$g$}
  \put(34,29){$x'$}
  \put(32,30){\circle*{2}}
  \put(20,41){$\rho(f)$}
  \put(50,14){$P_0(f)$}
  \put(50,44){$P_1(f)$}
  \put(108,29){$f$}
  \put(120,29){$x$}
  \put(118,30){\circle*{2}}
  \put(85,25){$v^{\min}(f)$}
  \put(42,29){$v^{\max}(g)$}
  \put(22,2){$v_0$}
  \put(21,56){$v_1$}
    \end{picture}
   \end{center}

Let us say that a closed region $\rho$ in $Z$ formed as the union of some
faces and edges of $H$ is an {\em ideal} one if each edge
$e\not\subset\partial Z$ having an end vertex in $\rho$ but the interior
not in $\rho$ is directed from $\rho$ to $Z-\rho$. In view
of~\refeq{clockwise},
  \begin{numitem}
(i) the indices of edges are weakly increasing along the path $P_0(f)$ and
weakly decreasing along $P_1(f)$; and (ii) $\rho(f)$ is ideal.
  \label{eq:region}
  \end{numitem}

We assert that $\rho(f)$ contains no point from $\Xup$. Suppose this is
not so and such a point $x'$ exists. Let $x'$ lie in a face $g$. We are
going to show the existence in $W'$ of an $i$-wire $\sigma$ and a $j$-wire
$\sigma'$ with $i<j$ such that
  \begin{numitem}
$f$ lies below $\sigma$ and above $\sigma'$, while $g$ lies above $\sigma$
and below $\sigma'$.
  \label{eq:2sigma}
  \end{numitem}

To show this, consider the edges $e,e'$ entering $v^{\max}(g)$, and assume
that $e$ is an $i'$-edge, $e'$ is a $j'$-edge, and $i'<j'$. Take the
$i'$-wire $\tau$ containing $e$; then $g$ lies above $\tau$. If $f$ lies
below $\tau$, then we take $\tau$ as the desired $\sigma$
in~\refeq{2sigma}. Now suppose that $f$ lies above $\tau$.
Using~\refeq{clockwise} and~\refeq{region}(i), one can conclude that
$\tau$ meets the path $P_0(f)$, but not $P_1(f)$, at some vertex $v$. Let
$e$ be the edge in $P_0(f)$ beginning at $v$ and take the wire $\sigma$
containing $e$; let it be an $i$-wire. Then~\refeq{clockwise}
and~\refeq{region}(i) imply that $\sigma$ meets $P_1(f)$, whence the face
$f$ lies below $\sigma$. Also $i<i'$, implying that the face $g$ lies
above $\sigma$. A $j$-wire $\sigma'$ required in~\refeq{2sigma} is
constructed in a similar way, starting with the edge $e'$ and using, if
needed, the path $P_1(f)$. By the construction, we have $i<j$.

Since $f$ lies below $\sigma$ and $g$ lies above $\sigma$, we have
$x_i<x'_i$. In its turn, the behavior of $\sigma'$ gives $x_j>x'_j$. Also
$x_k<q\le x'_k$. Note that $j<k$; for if $j=k$ then $\sigma'$ would be of
the form $\sigma_{k,q'}$ with $q'<q$, giving $x'_k<q'<q\le x'_k$. So
$i<j<k$, and we obtain~\refeq{obstacle}; a contradiction.

Thus, the region $\rho(f)$ contains no elements of $\Xup$. Define
  $$
  \hat\rho:=\cup\{\rho(f)\colon f\in F(H)\;\;
  \mbox{$f$ contains an element of $\Xdown$}\}.
  $$
Then $\hat\rho\cap \Xup=\emptyset$. By the construction of regions
$\rho(f)$ and by~\refeq{region}(ii), $\hat\rho\cap\partial Z$ is contained
in the part of $L$ from $p_{1,1}$ to the vertex in $L\cap V(H)$ preceding
$p_{k,q}$. Also
  \begin{numitem}
(i) for each face $f$ with $f\cap\Xdown\ne\emptyset$, the vertex
$v^{\min}(f)$ is in $\hat\rho$; and (ii) $\hat\rho$ is ideal.
  \label{eq:hatrho}
  \end{numitem}

Now we are ready to draw the desired wire $\sigma_{k,q}$. We add to
$\hat\rho$ the part of $L$ from $p_0$ to $p_{k,q}$. Obviously, $p_{k,q}$
belongs to the upmost face $f^{\max}$. Let $U$ be the set of edges
$e\not\subset\partial Z$ connecting $\hat\rho$ and $Z-\hat\rho$; they go
out of $\hat\rho$, by~\refeq{hatrho}(ii). Let $\Fscr$ be the set of faces
$f$ such that $v^{\min}(f)\in \hat\rho$ but $f\not\subseteq\hat\rho$.

First consider the case $q=1$. Then $p'_{k,q}$ belongs to the bottommost
face $f^{\min}$. For each edge $e\in U$, choose a point $v_e$ in the
interior of $e$. Clearly each face $f\in\Fscr-\{f^{\min},f^{\max}\}$ has
exactly two edges $e,e'$ in $U$. We connect $v_e$ and $v_{e'}$ by a curve
$\sigma(f)$ within $f$, subdividing $f$ into two regions (faces). If $f$
has points from $\Xdown$ ($\Xup$), we move them into the region containing
$v^{\min}(f)$ (resp. $v^{\max}(f)$). We act similarly for the face
$f=f^{\max}$ ($f=f^{\min}$) with the only difference that $\sigma(f)$
connects $p_{k,q}$ (resp. $p'_{k,q}$) and $v_e$ for the unique edge $e\in
U$ in $f$. The concatenation of these $\sigma(f)$ for all $f\in\Fscr$
gives the desired wire $\sigma_{k,q}$ (it intersects each wire in $W'$
exactly once, by~\refeq{hatrho}(ii)).

In case $q>1$, we extend $\hat\rho$ by adding to it the region $\rho'$ of
$Z$ formed by the faces lying below $\sigma_{k,q-1}$. Note that no point
of $\Xup$ is contained in $\rho'$  and that no edge of $H$ goes from
$Z-\rho'$ to $\rho'$. Then~\refeq{hatrho} remains valid, and we draw
$\sigma_{k,q}$ as in the previous case.

Add $\sigma_{k,q}$ to $W'$ and continue the process. Upon constructing the
last wire $\sigma_{n,a_n}$, the resulting W-diagram $W$ determines a
required RT-diagram for $X$. (Observe that if a face $f$ for $W$ involves
a point of $X$, then such a point $x$ is unique and the vertex of $D$
corresponding to $f$ is just $\pi(x)$.)
  \end{proof}

 \noindent
 {\bf Remark 3.}
Analyzing the above proof, one can deduce that the height $h(D)$ of the
obtained RT-diagram $D$ is minimum among the possible tilings for $X$.
Moreover, such a $D$ is unique.

\subsection{Sub-zonogons and sub-boxes.} \label{ssec:transf}

We demonstrate one simple consequence of Theorem~\ref{tm:X-tiling} that
will be used later.

Let $y,a'\in\Zset_+^n$ be such that $y+a'\le a$. Then the {\em
sub-zonogon} of $Z(a)$ of size $a'$ with the beginning at $y$ is the set
$Z(y;a'):=y+Z(a')$ (clearly it is contained in $Z(a)$). In other words,
$Z(y;a')$ is the projection by $\pi$ of the convex hull of the sub-box
$B(y|\,y+a'):=\{x\in\Zset_+^n\colon y\le x\le y+a'\}$ of the box $B(a)$
(note that we admit $a_i=0$ for some $i$'s, i.e., the dimension of the
sub-box may be less than $n$).

  \begin{prop} \label{pr:sub-zon}
Any RT-diagram $D'$ for a sub-zonogon $Z(y;a')$ is extendable into an
RT-diagram for $Z(a)$.
  \end{prop}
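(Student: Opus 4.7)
The plan is to derive the proposition from Theorem~\ref{tm:X-tiling}. Let $V\subseteq B(y|\,y+a')\subseteq B(a)$ denote the preimage under $\pi$ of the vertex set of $D'$. The translated set $V-y$ lies in $B(a')$ and is the vertex set of the translated RT-diagram $D'-y$ for the zonogon $Z(a')$. By the easy ``only if'' direction of Theorem~\ref{tm:X-tiling}, derived in the discussion preceding it, $V-y$ does not satisfy the obstacle condition \refeq{obstacle} inside $B(a')$. Since \refeq{obstacle} refers only to the strict comparisons $x_i<x'_i$, $x_j>x'_j$, $x_k<x'_k$ between coordinates of pairs of points---a property preserved by the common translation carrying $V-y$ to $V$---the set $V$ also fails \refeq{obstacle} in $B(a)$. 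Theorem~\ref{tm:X-tiling} then furnishes an RT-diagram $D$ for $Z(a)$ whose vertex set contains $\pi(V)=V(D')$.

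To promote this vertex-level inclusion to an extension of $D'$ as a set of rhombi, I would invoke the fact that an RT-diagram of a zonogon is uniquely determined by its set of vertices (its rhombi are exactly the unit parallelograms $\{v,v+\xi_i,v+\xi_j,v+\xi_i+\xi_j\}$ with all four corners in the vertex set, and the way these are assembled into a planar tiling of the zonogon is combinatorially forced). Granting this, it suffices to arrange $V(D)\cap Z(y;a')=V(D')$, i.e.\ to forbid $D$ from adding extra vertices inside the sub-zonogon. One way to ensure this is to appeal to Remark~3: the diagram $D$ furnished by the constructive proof of Theorem~\ref{tm:X-tiling} has minimum total height $h(D)$, and any additional interior vertex strictly inside the convex region $Z(y;a')$ would be the central vertex of a $\curlywedge$-hexagon whose mutation into a $\curlyvee$-hexagon decreases $h(D)$, contradicting the minimality.

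The hardest step will be to justify rigorously that this minimality really precludes extra vertices inside $Z(y;a')$. An alternative route that sidesteps Remark~3 entirely is to construct the extension directly at the wiring-diagram level: take the W-diagram $W'$ dual to $D'$, extend each of its wires from its entry/exit points on $\partial Z(y;a')$ out to the corresponding boundary points of $Z(a)$, and draw every wire whose index $(i,q)$ satisfies $q\notin[y_i+1,\,y_i+a'_i]$ entirely in the outer annular region $Z(a)\setminus Z(y;a')$. Since that outer region is free of constraints from $D'$, all mandatory pairwise crossings among the extended and newly added wires can be realized there while respecting conditions~\refeq{wires}, and the resulting W-diagram on $Z(a)$ is dual to an RT-diagram $D$ whose restriction to $Z(y;a')$ is precisely $D'$.
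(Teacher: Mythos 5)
Your first half is sound and essentially parallels the paper: both proofs reduce Proposition~\ref{pr:sub-zon} to Theorem~\ref{tm:X-tiling}. The paper applies it to $X=$ the lattice points on the \emph{boundary} of $Z(y;a')$ and checks~\refeq{obstacle} directly (coordinates are weakly monotone along the two boundary paths), whereas you apply it to $X=V(D')$ and obtain the non-obstacle condition from the ``only if'' direction plus translation invariance; that step is correct. The gap is in the second half. The paper's choice of $X$ means it never has to compare $D$ with $D'$ inside the sub-zonogon: once an RT-diagram $D$ contains all boundary points of $Z(y;a')$, the sub-zonogon is a union of tiles of $D$, and one simply deletes those tiles and glues in $D'$. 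Your route instead needs the $D$ produced by Theorem~\ref{tm:X-tiling} to restrict \emph{exactly} to $D'$, and neither ingredient you invoke for this is established: (i) the rigidity claim that an RT-diagram is determined by its vertex set is asserted, not proved (it is true, but requires an argument, e.g.\ that two faces of the dual wiring diagram separated by exactly one wire must share an edge, so that every unit segment of the form $[v,v+\xi_i]$ joining two vertices of a tiling is an edge of it); and (ii) the appeal to Remark~3 does not work as stated: an extra vertex of $D$ inside $Z(y;a')$ need not be the central vertex of a $\curlywedge$-hexagon (its three incident tiles need not assemble into a hexagon at all), and even when it is, the flip replaces that vertex by a different one, with no guarantee that containment of $\pi(V)$, or compatibility with $D'$, is preserved. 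You flag this yourself as the hardest step, and it is exactly where the proof is incomplete.

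Note that once (i) is actually proved, (ii) becomes unnecessary: if every unit segment between vertices of $D$ is an edge of $D$, then every edge of $D'$ is an edge of $D$, hence every rhombus of $D'$ is a tile of $D$, and $D$ extends $D'$ with no minimality argument at all. Alternatively, following the paper and taking $X$ to be only the boundary points side-steps the rigidity question almost entirely, since one only needs that no tile of $D$ crosses $\partial Z(y;a')$ before swapping in $D'$. Your fallback sketch of extending the wiring diagram of $D'$ outward is in the spirit of the paper's alternative inductive proof, but as written it leaves the substantive point unverified, namely that all the required single crossings (old wires with new ones, and new wires with each other, while old wires acquire no second crossing) can be realized inside $Z(a)\setminus Z(y;a')$ in accordance with~\refeq{wires}; making that precise is essentially the content of the paper's induction, which enlarges the sub-box one coordinate layer at a time.
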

  \begin{proof}
It suffices to consider the set $X$ of points $x\in B(y|\,y+a')$ such that
$\pi(x)$ lies in the boundary of $Z(y;a')$ and show that~\refeq{problem}
has a solution for this $X$. By Theorem~\ref{tm:X-tiling}, one has to
check that~\refeq{obstacle} does not take place. This is straightforward,
taking into account that for the elements $x\in X$ are coordinate-wise
weakly increasing when $\pi(x)$ moves along the the left boundary of
$Z(y;a)$, and decreasing along the right boundary.
 \end{proof}

(An alternative proof: Assuming that $B':=B(y|\,y+a')$ is not the whole
$B(a)$, there is an $i$ such that the sub-box $B''$ of the form either
$B(\bfzero|\,a-1_i)$ or $B(1_i|\,a)$ includes $B'$. By induction on the
size of the box, one may assume that $D'$ is extendable into an RT-diagram
$D''$ on the sub-zonogon $Z''$ in $Z(a)$ corresponding to $B''$. To extend
$D''$ into an RT-diagram on $Z(a)$ is easy.)

 \medskip
Next, as is explained in the Introduction, any TP-function on a truncated
box can be extended into a TP-function on the entire box. Using
Proposition~\ref{pr:sub-zon}, we can further extend the latter function.
    \begin{prop} \label{pr:a-ap}
Let $B'=B(a'|\,a'')$ be a sub-box of a box $B(a)$ and let $f'$ be a
TP-function on $B'$. Then $f'$ is extendable into a TP-function $f$ on
$B(a)$.
  \end{prop}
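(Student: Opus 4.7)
The plan is to find a normal basis $\Bscr$ of $B(a)$ whose intersection with $B'$ is itself a basis of $B'$, and then to use Theorem~A to spread $f'$ across $\Bscr$. I would first translate: the shift $x\mapsto x-a'$ is a bijection $B'\to\tilde B:=B(a''-a')$ that sends TP-functions to TP-functions (since the TP-relations~\refeq{GP3},~\refeq{GP4} only involve relative displacements by the $1_i$), and hence TP-bases to TP-bases. Let $\tilde\Bscr:=Int(a''-a')$ be the standard basis of $\tilde B$ and put $\Bscr':=a'+\tilde\Bscr\subseteq B'$; by Theorem~A, $\Bscr'$ is a basis of $B'$. By Proposition~\ref{pr:diag}, $G_{\tilde\Bscr}$ is an RT-diagram on $Z(a''-a')$, which under the translation by $a'$ becomes an RT-diagram $D'$ on the sub-zonogon $Z(a';\,a''-a')\subseteq Z(a)$ with vertex set $\pi(\Bscr')$.

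Next I would invoke Proposition~\ref{pr:sub-zon} to extend $D'$ to an RT-diagram $D$ on the full zonogon $Z(a)$. Crucially, the extension adds rhombi only outside the interior of $Z(a';a''-a')$, so the vertex set of $D$ lying in that sub-zonogon is exactly $\pi(\Bscr')$. Proposition~\ref{pr:diag} then produces a normal basis $\Bscr\subseteq B(a)$ with $\pi(\Bscr)=V(D)$, and by construction $\Bscr\cap B'=\Bscr'$.

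Finally, I would define $g:\Bscr\to\Rset$ by $g(x):=f'(x)$ for $x\in\Bscr'$ and, say, $g(x):=0$ for $x\in\Bscr\setminus\Bscr'$. Theorem~A applied to $B(a)$ then yields a unique TP-function $f$ on $B(a)$ with $f\rest{\Bscr}=g$. The restriction $f\rest{B'}$ is itself a TP-function on $B'$ (since each cortege contained in $B'$ is also a cortege in $B(a)$) and it agrees with $f'$ on the basis $\Bscr'$ of $B'$; by the injectivity part of Theorem~A (Proposition~\ref{pr:inj_gen} applied in $B'$ via the shift), this forces $f\rest{B'}=f'$, which is the desired extension. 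The one technical point to nail down is that $\Bscr\cap B'=\Bscr'$ exactly, which reduces to the observation that the extension procedure of Proposition~\ref{pr:sub-zon} can be (and is) chosen so as to leave the tiling $D'$ on the sub-zonogon intact.
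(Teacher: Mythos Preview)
Your proof is correct and follows essentially the same route as the paper's own argument: take the (shifted) standard basis for $B'$, view it as an RT-diagram on the sub-zonogon, extend that diagram to $Z(a)$ via Proposition~\ref{pr:sub-zon} to obtain a normal basis $\Bscr$ of $B(a)$ containing $\Bscr'$, extend the values arbitrarily on $\Bscr\setminus\Bscr'$, and let the basis property produce the global TP-function. You spell out two points the paper leaves implicit---the translation identifying $B'$ with $B(a''-a')$, and the injectivity argument showing $f\rest{B'}=f'$---but the strategy is the same.

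One small remark: when you write ``Theorem~A applied to $B(a)$ then yields a unique TP-function $f$,'' note that Theorem~A is stated for the \emph{standard} basis, whereas $\Bscr$ here is an arbitrary normal basis. What you actually need is that $\Bscr$ is a TP-basis, which holds because (by Proposition~\ref{pr:diag}) every RT-diagram corresponds to a normal basis, and normal bases are by definition obtained from the standard one by TP3-mutations, each of which preserves the basis property. Your final worry about $\Bscr\cap B'=\Bscr'$ is harmless but also unnecessary: since $\Bscr'\subseteq\Bscr$, you may simply set $g:=f'$ on $\Bscr\cap B'$ (not just on $\Bscr'$), and the injectivity argument goes through verbatim.
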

  \begin{proof}
Take a normal basis $\Bscr'$ for $B'$, e.g., the standard one, and let
$g'$ be the restriction of $f'$ to $\Bscr'$. Extend the RT-diagram $D'$
corresponding to $\Bscr'$ into an RT-diagram on $Z(a)$. This gives a basis
$\Bscr$ for $B(a)$ including $\Bscr'$. Extend $g'$ arbitrarily into a
function $g$ on $\Bscr$. Then the TP-function $f$ on $B(a)$ determined by
$g$ is as required.
  \end{proof}

 \noindent
 {\bf Remark 4.}
In a recent work, Henriques and Speyer~\cite{HS} present a number of
results on rhombus (viz. rhombic) tilings and their applications to the
$n$-cube recurrence, and others. They argue in direct terms of tilings,
not appealing to wiring diagrams. On this way, they prove
Proposition~\ref{pr:sub-zon} for the case when $Z(y;a')$ is a hexagon.
Also it is shown there that each rhombus tiling can be turned into the
minimal rhombus one by a series of downward flips (viz. transformations of
$\curlywedge$-hexagons into $\curlyvee$-ones), yielding an alternative
proof of Proposition~\ref{pr:diag}. They also consider a certain complex
associated with the set of tilings on a zonogon, prove that it is simply
connected, and use this fact to show that the vertices of tiling index a
basis for the functions obeying the $n$-cube recurrence. We, however, do
not see whether one can apply a similar approach to the case of
TP-functions (since not every TP3-relation concerns a hexagon of a tiling,
as we have seen above).

       \section{Submodular TP-functions} \label{sec:submod}

In this section we consider TP-functions on a box $B(a)$
with the additional property of submodularity. We
demonstrate an important role of the standard basis $Int(a)$ for
such functions by showing that a TP-function is submodular if and only
if its restriction to $Int(a)$ is such. Here $Int(a)$ is the set of
all fuzzy-intervals in $B(a)$ to which the zero fuzzy-interval is
added as well.

Recall that a function $f$ on a lattice $\Lscr$, with meet
operation $\wedge$ and join operation $\vee$, is called {\em
submodular} if it satisfies the {\em submodular inequality}
  $$
  f(\alpha)+f(\beta)\ge f(\alpha\wedge\beta)+f(\alpha\vee\beta)
    $$
for each pair $\alpha,\beta\in\Lscr$.
Sometimes one considers a function $f$ on a part $\Lscr'$ of the lattice,
in which case the submodular inequality is imposed whenever all
$\alpha,\beta,\alpha\vee\beta,\alpha\wedge\beta$ occur is $\Lscr'$.

The lattice operations on elements $x,x'$ of the box $B(a)$ are defined
in a natural way: $x\wedge x'$ and $x\vee
x'$ are the vectors whose $i$th entries are $\min\{x_i,x'_i\}$ and
$\max\{x_i,x'_i\}$, respectively.
A simple fact is that a
function $f$ on the lattice $B(a)$ is submodular if and only if
   \begin{equation} \label{eq:box_sub}
   f(x+1_i)+f(x+1_j)\ge f(x)+f(x+1_i+1_j)
   \end{equation}
holds for all $x,i,j$ ($i\ne j$) such that all four vectors involved
belong to  $B(a)$.

  \begin{theorem} \label{tm:box_sub}
Let $f$ be a TP-function on a box $B(a)$. Then $f$ is submodular
if and only if it is submodular on the standard basis $Int(a)$, where
the latter means that~\refeq{box_sub} holds whenever $i\ne j$ and the
four vectors occurring in it belong to $Int(a)$.
   \end{theorem}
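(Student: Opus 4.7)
The ``only if'' direction is immediate, so I focus on the converse. The plan is to translate submodularity into a property of the little rhombi of RT-diagrams associated with normal bases (Proposition~\ref{pr:diag}) and to propagate it from the diagram of $Int(a)$ to that of any normal basis via the flips used in the proof of Proposition~\ref{pr:diag}. Observe first that a quadruple $\{x,x+1_i,x+1_j,x+1_i+1_j\}\subset B(a)$ with $i<j$ has all four entries in $Int(a)$ precisely when it spans a little $ij$-rhombus of the standard RT-diagram (atomicity of little rhombi rules out interior vertices), and every such rhombus arises this way. Hence the hypothesis is equivalent to the submodular inequality holding on every rhombus of the standard tiling, a property I will call \emph{rhombic submodularity}.

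The main step is to show that rhombic submodularity is preserved under each flip $\curlyvee\rightsquigarrow\curlywedge$. Consider a hexagon $H(s,i,j,k)$ as in~\refeq{hex}; let $y=s+\xi_j$ be the interior vertex of the $\curlyvee$-partition and $y'=s+\xi_i+\xi_k$ that of the $\curlywedge$-partition. The TP3-relation gives
\[
f(y)+f(y')=\max\{f(v)+f(w),\; f(u)+f(z)\},
\]
and in particular $f(y)+f(y')\ge f(v)+f(w)$ and $f(y)+f(y')\ge f(u)+f(z)$. Assume the three old ($\curlyvee$) rhombic inequalities
\[
f(u)+f(y)\ge f(s)+f(v),\qquad f(y)+f(w)\ge f(s)+f(z),\qquad f(v)+f(z)\ge f(y)+f(t).
\]
Rewriting the first two as $f(v)\le f(u)+f(y)-f(s)$ and $f(z)\le f(y)+f(w)-f(s)$ shows that both terms under the TP3-max are bounded by $f(u)+f(w)+f(y)-f(s)$, yielding the new $ik$-inequality $f(u)+f(w)\ge f(s)+f(y')$. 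The new $jk$-inequality follows from $f(y')\ge f(u)+f(z)-f(y)$ and the old $ik$-inequality:
\[
f(v)+f(y')\ge f(u)+\bigl(f(v)+f(z)-f(y)\bigr)\ge f(u)+f(t).
\]
The new $ij$-inequality $f(y')+f(z)\ge f(w)+f(t)$ is derived symmetrically using $f(y')\ge f(v)+f(w)-f(y)$ and the old $ik$-inequality. Remarkably, none of the three derivations requires a case split on which term achieves the TP3-max.

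To complete the proof, take an arbitrary quadruple $\{x,x+1_i,x+1_j,x+1_i+1_j\}\subset B(a)$ with $i<j$. Its image in $Z(a)$ is a single little $ij$-rhombus; by Proposition~\ref{pr:sub-zon} this one-rhombus tiling of the sub-zonogon $Z(x;1_i+1_j)$ extends to a full RT-diagram $D$ on $Z(a)$, giving a normal basis $\Bscr$. From the proof of Proposition~\ref{pr:diag}, $\Bscr$ is reached from $Int(a)$ by a sequence of flips of type $\curlyvee\rightsquigarrow\curlywedge$ (the inverse of the height-reducing sequence used there). Iterating the flip-invariance above, rhombic submodularity passes from the standard tiling to $D$, and in particular the submodular inequality holds for the chosen rhombus.

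The main obstacle is the flip-invariance verification, which requires carefully matching three old and three new rhombic inequalities through the TP3-equality; the clean no-case-split argument in the needed direction $\curlyvee\rightsquigarrow\curlywedge$ is what makes the propagation go through and renders appeal to arbitrary normal bases (rather than only the standard one) superfluous once the local flip step is done.
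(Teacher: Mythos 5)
Your proposal is correct and follows essentially the same route as the paper: reduce via Proposition~\ref{pr:sub-zon} and the flip-connectivity from the proof of Proposition~\ref{pr:diag} to showing that the three rhombic submodular inequalities propagate across a normal ($\curlyvee\rightsquigarrow\curlywedge$) mutation using the TP3-relation. Your hexagon-labelled computation is exactly the paper's $2^{[3]}$ calculation in different notation, so there is nothing to add.
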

   \begin{proof}
We use results on rhombic tilings from Section~\ref{sec:tiling}.

Consider elements $x,x+1_i,x+1_j,x+1_i+1_j$ of $B(a)$ ($i\ne j$). Their
images in the zonogon $Z(a)$ form a (little) rhombus, and by
(a very special case of) Proposition~\ref{pr:sub-zon}, this rhombus
belongs to some RT-diagram on $Z(a)$. In other words, the above four
elements are contained in some normal basis for $B(a)$. In light of this,
we can reformulate the theorem (and thereby slightly strengthen it)
by asserting that if a TP-function $f$ is
submodular with respect to some normal basis $\Bscr$ (or its corresponding
tiling), then $f$ is submodular w.r.t. any other normal basis.
(When saying that $f$ is {\em submodular w.r.t.} $\Bscr$, we mean
that~\refeq{box_sub} holds whenever the four vectors there belong
to $\Bscr$. The theorem considers as $\Bscr$ the standard basis
$Int(a)$.)

Next, we know (see the proof of Proposition~\ref{pr:diag}) that making
normal mutations (equivalently: transformations of $\curlywedge$-hexagons
into $\curlyvee$-hexagons, or conversely), one can reach any normal basis from
a fixed one. Therefore, it suffices to show that the submodularity
is maintained by a normal mutation.

In other words, it suffices to prove the theorem for the simplest case when
$B(a)$ is the 3-dimensional Boolean cube $C=2^{[3]}$. In this case, the
standard basis $Int$ consists of the sets $\emptyset,1,2,3,12,23,123$, the
submodularity on $Int$ involves the three rhombi of the corresponding tiling,
and one has to check the submodularity for the three rhombi arising under the
mutation $2\rightsquigarrow 13$; see the picture.

\unitlength=1mm \special{em:linewidth 0.6pt} \linethickness{0.6pt}
 \begin{picture}(78.00,42.00)
\put(60.00,5.00){\vector(3,2){15.00}}
\put(75.00,15.00){\vector(-1,3){5.00}}
\put(70.00,30.00){\vector(-3,1){15.00}}
\put(60.00,5.00){\vector(-3,1){15.00}}
\put(45.00,10.00){\vector(-1,3){5.00}}
\put(40.00,25.00){\vector(3,2){15.00}}
\put(60.00,5.00){\vector(-1,3){5.00}}
\put(55.00,20.00){\vector(3,2){15.00}}
\put(55.00,20.00){\vector(-3,1){15.00}}
\bezier{10}(45.00,10.00)(53.00,15.00)(60.00,20.00)
\bezier{9}(60.00,20.00)(58.00,27.00)(55.00,35.00)
\bezier{9}(60.00,20.00)(67.00,18.00)(75.00,15.00)
\put(60.00,20.00){\circle*{1.500}}
\put(60.00,2.00){\makebox(0,0)[cc]{$\emptyset$}}
\put(78.00,13.00){\makebox(0,0)[cc]{3}}
\put(55.00,23.00){\makebox(0,0)[cc]{2}}
\put(41.00,8.00){\makebox(0,0)[cc]{1}}
\put(36.00,27.00){\makebox(0,0)[cc]{12}}
\put(64.00,21.00){\makebox(0,0)[cc]{13}}
\put(74.00,31.00){\makebox(0,0)[cc]{23}}
\put(55.00,38.00){\makebox(0,0)[cc]{123}}
 \end{picture}

Let $f$ be a TP-function on $C$, i.e., $f$ satisfies
 \begin{equation} \label{TP}
f(2)+f(13)=\max\{f(1)+f(23),f(3)+f(12)\}.
 \end{equation}
The submodularity on $Int$ reads as:
  \begin{eqnarray}
  f(\emptyset)+f(23) &\le& f(2)+f(3); \label{23} \\
   f(\emptyset)+f(12) &\le & f(1)+f(2); \label{12} \\
  f(2)+f(123) &\le& f(12)+f(23). \label{1223}
   \end{eqnarray}

We show that~(\ref{TP})--(\ref{1223}) imply the submodular inequalities
for the other three rhombi, as follows. Adding $f(1)$ to (both sides of)
(\ref{23}) gives
  $$
f(1)+f(23)\le f(2)+f(3)-f(\emptyset)+f(1).
  $$
Adding $f(3)$ to (\ref{12}) gives
  $$
f(3)+f(13)\le f(1)+f(2)-f(\emptyset)+f(3).
  $$
Substituting these inequalities into~(\ref{TP}), we obtain
  $$
f(2)+f(13)=\max(f(1)+f(23),f(3)+f(12))\le
f(1)+f(2)+f(3)-f(\emptyset),
   $$
which implies the submodular inequality for the rhombus
on $\emptyset,1,3,13$:
  $$
f(\emptyset)+f(13)\le f(1)+f(3).
  $$

Arguing similarly, one obtains the submodular inequalities for
the rhombi on $1,12,13,123$ and on $3,13,23,123$. More precisely:
  \begin{eqnarray*}
f(1)+f(123) &\le& f(1)+f(12)+f(23)-f(2) \qquad\qquad
                            \mbox{(by~(\ref{1223}))} \\
  &\le& f(2)+f(13)+f(12)-f(2) \qquad\qquad
                        \mbox{(by~(\ref{TP}))} \\
  &=& f(13)+f(12);
  \end{eqnarray*}
and
  \begin{eqnarray*}
f(3)+f(123) &\le& f(3)+f(12)+f(23)-f(2) \qquad\qquad
                        \mbox{(by~(\ref{1223}))} \\
   &\le& f(2)+f(13)+f(23)-f(2)  \qquad\qquad
                  \mbox{(by~(\ref{TP}))} \\
   &=& f(13)+f(23).
  \end{eqnarray*}

(Note that if needed, one can reverse the arguments to
obtain~(\ref{23})--(\ref{1223}) from the other three inequalities.)
  \end{proof}

 \noindent
{\bf Remark 5.} If we replace in Theorem~\ref{tm:box_sub} the
submodularity condition by the corresponding {\em supermodularity}
condition (i.e., replace $\ge$ by $\le$), then the TP-function $f$ need
not be supermodular globally, even in the Boolean case with $n=3$. A
counterexample is the function on $2^{[3]}$ taking value 0 on
$\{\emptyset\},1,2,3,12$ and value 1 on $13,23,123$ (the supermodularity
is violated for the sets $13$ and $23$). On the other hand, one can show
that a version of the theorem concerning {\em modular} TP-functions is
valid.


\section{Skew-submodular TP-functions} \label{sec:skew}

In this section we show that another important property can also be
TP-propagated from the standard basis to the entire box.

 \smallskip
 \noindent
{\bf Definition.} We say that a function $f$ on a box $B(a)$ is {\em
skew-submodular} if
\begin{equation} \label{skew}
   f(x+1_i+1_j)+f(x+1_j) \ge f(x+1_i)+f(x+2_j)
\end{equation}
holds for all $x,i,j$, $i\ne j$, such that all four vectors involved are
in $B(a)$.

 \smallskip
Here $2_j$ stands for $2 \cdot 1_j$, and note that $i,j$ need not be
ordered. So the skew-submodularity imposes a restriction on $f$ within
each sub-box of the form $B(x|\,x+1_i+2_j)$ in $B(a)$. The picture
illustrates the corresponding tiling of the zonogon $Z(1_i+1_j)$ when
$i<j$ (on the right) and $j<i$ (on the left); here the skew-submodular
condition reads as $f(B)+f(C)\ge f(A)+f(D)$.

\unitlength=1mm \special{em:linewidth 0.6pt} \linethickness{0.6pt}
\begin{picture}(93.00,38.00)
\put(40.00,5.00){\vector(1,2){5.00}}
\put(45.00,15.00){\vector(-1,2){5.00}}
\put(40.00,25.00){\vector(-1,2){5.00}}
\put(40.00,5.00){\vector(-1,2){5.00}}
\put(35.00,15.00){\vector(-1,2){5.00}}
\put(30.00,25.00){\vector(1,2){5.00}}
\put(35.00,15.00){\vector(1,2){5.00}}
\put(80.00,6.00){\vector(-1,2){4.67}}
\put(75.33,15.00){\vector(1,2){5.00}}
\put(80.33,25.00){\vector(1,2){5.00}}
 \put(80.00,6.00){\vector(1,2){4.67}}
\put(84.67,15.00){\vector(1,2){5.00}}
\put(89.67,25.00){\vector(-1,2){5.00}}
\put(85.00,15.00){\vector(-1,2){5.00}}
\put(45.00,10.00){\makebox(0,0)[cc]{$\xi_i$}}
\put(35.00,10.00){\makebox(0,0)[cc]{$\xi_j$}}
\put(75.00,10.00){\makebox(0,0)[cc]{$\xi_i$}}
\put(85.00,10.00){\makebox(0,0)[cc]{$\xi_j$}}
\put(72.00,15.00){\makebox(0,0)[cc]{A}}
\put(88.00,15.00){\makebox(0,0)[cc]{B}}
\put(76.00,25.00){\makebox(0,0)[cc]{C}}
\put(93.00,25.00){\makebox(0,0)[cc]{D}}
\put(48.00,15.00){\makebox(0,0)[cc]{A}}
\put(32.00,15.00){\makebox(0,0)[cc]{B}}
\put(44.00,25.00){\makebox(0,0)[cc]{C}}
\put(27.00,25.00){\makebox(0,0)[cc]{D}}
\end{picture}

In fact, one can regard~(\ref{skew}) as a degenerate form of the
TP3-relation~\refeq{GP3}. Indeed, putting $j=k$ in~\refeq{GP3}, we obtain
 $$
f(x+1_i+1_j)+f(x+1_j)=\max\{f(x+1_i+1_j)+f(x+1_j), f(x+2_j)+f(x+1_i)\},
 $$
which is just equivalent to~(\ref{skew}).

 \begin{theorem} \label{tm:skew}
A TP-function $f$ on a box $B(a)$ is skew-submodular if and only if its
restriction to the standard basis $Int(a)$ is skew-submodular (in the
sense that~(\ref{skew}) holds whenever $i\ne j$ and the four vectors
occurring in it belong to $Int(a)$). Furthermore, a skew-submodular $f$
satisfies the additional relations:
   \begin{equation} \label{slope}
 f(x+1_i+1_j)+f(x+1_j+1_k) \ge f(x+1_i+1_k)+f(x+2_j),
  \end{equation}
where $i,j,k$ are different.
  \end{theorem}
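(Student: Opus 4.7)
The plan is to mirror the strategy of the proof of Theorem~\ref{tm:box_sub}. Given any 4-tuple $\{x+1_i, x+1_j, x+1_i+1_j, x+2_j\}$ appearing in a skew-submodular inequality, its projections fit into the $1\times 2$ parallelogram $Z(x;\,1_i+2_j)\subset Z(a)$, which admits a unique rhombic tiling (two stacked $ij$-rhombi). By Proposition~\ref{pr:sub-zon} this tiling extends to a rhombic tiling of the full zonogon $Z(a)$, so the four vectors lie in some normal basis. Consequently, the first claim of the theorem is equivalent to the (slightly stronger) statement that if a TP-function $f$ is skew-submodular with respect to some normal basis, then it is so with respect to every normal basis.

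Since any two normal bases are connected by a chain of normal TP3-mutations (Proposition~\ref{pr:diag}), it suffices to verify that skew-submodularity is preserved under a single mutation. A normal mutation flips a hexagon $H(s,\ell,m,n)$ with $\ell<m<n$, swapping its interior vertex between $x+1_m$ and $x+1_\ell+1_n$ (where $s=\pi(x)$). A skew-sub 4-tuple in the new basis but not in the old must contain the newly-added vertex, and vice versa. Enumerating the possible placements of such a 4-tuple relative to $\{\ell,m,n\}$ versus the two skew-sub indices, in each configuration the new inequality should follow from the TP3 equation at $(s,\ell,m,n)$ combined with one or two skew-sub inequalities from the pre-mutation basis, by a linear-arithmetic derivation analogous to the one in the proof of Theorem~\ref{tm:box_sub}. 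The main obstacle will be executing this case analysis compactly.

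For the additional relation~(\ref{slope}) with distinct $i,j,k$, the derivation is direct. Applying the TP3 equation to the sorted triple of $\{i,j,k\}$, the quantity $f(x+1_i+1_j)+f(x+1_j+1_k)-f(x+1_i+1_k)-f(x+2_j)$ rewrites, via the identity $c-\max\{A,B\}=\min\{c-A,c-B\}$, as a minimum (when $j$ is the middle of $\{i,j,k\}$) or a single expression (when $j$ is extreme), each being the LHS$-$RHS of a skew-submodular inequality at $(x,i,j)$ or $(x,k,j)$. For instance, when $i<j<k$, TP3 gives $f(x+1_i+1_k)+f(x+1_j)=\max\{f(x+1_i+1_j)+f(x+1_k),\,f(x+1_i)+f(x+1_j+1_k)\}$, whence
\begin{align*}
&f(x+1_i+1_j)+f(x+1_j+1_k)-f(x+1_i+1_k)-f(x+2_j) \\
&\quad =\min\{f(x+1_j+1_k)+f(x+1_j)-f(x+1_k)-f(x+2_j),\\
&\qquad\qquad\;\; f(x+1_i+1_j)+f(x+1_j)-f(x+1_i)-f(x+2_j)\}\ge 0
\end{align*}
by skew-submodularity at $(x,k,j)$ and $(x,i,j)$ respectively; the other orderings are analogous.
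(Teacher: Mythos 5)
Your reduction of the main claim follows the paper's own route: by Propositions~\ref{pr:sub-zon} and~\ref{pr:diag} the four vectors of any instance of~(\ref{skew}) lie in some normal basis, and normal bases are connected by normal mutations, so everything hinges on showing that skew-submodularity with respect to a normal basis survives a single normal TP3-mutation. That is exactly the step you do not prove: you write that in each configuration the new inequality ``should follow'' from the TP3-relation of the flipped hexagon plus one or two old-basis skew-submodular inequalities, and you defer the case analysis as ``the main obstacle''. But this finite verification is the entire nontrivial content of the implication ``skew-submodular on $Int(a)$ $\Rightarrow$ skew-submodular on $B(a)$'', and it is not literally the hexagon computation of Theorem~\ref{tm:box_sub}: since a skew-submodular quadruple uses a doubled direction, the local configurations to be examined are the boxes $B(1,1,2)$, $B(1,2,1)$, $B(2,1,1)$, and in, say, $B(1,2,1)$ one must check that the two skew-submodular inequalities available in the standard basis of that box, together with its two TP3-relations, yield the skew-submodular inequalities for the two faces created by the flip. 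The paper carries this out explicitly (first deriving the corresponding instance of~(\ref{slope}), then combining it with an inequality extracted from the second TP3-relation and cancelling common terms); until you write out this, or your projected, case analysis, the ``if'' direction of the theorem is not established by your proposal.

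By contrast, your derivation of the additional relation~(\ref{slope}) from global skew-submodularity is correct and complete: rewriting $f(x+1_i+1_j)+f(x+1_j+1_k)-f(x+1_i+1_k)-f(x+2_j)$ via the TP3-relation at the sorted triple and the identity $c-\max\{A,B\}=\min\{c-A,\,c-B\}$, and treating the three positions of $j$ separately, is in substance the same computation by which the paper obtains this inequality inside $B(1,2,1)$; note that your argument only invokes the skew-submodular inequalities at $(x,i,j)$ and $(x,k,j)$, which in the small box are precisely the two standard-basis ones. So your second part is sound and is, in effect, the first half of the missing mutation-invariance check; what remains, and must still be done, is the second half, namely deducing the post-flip skew-submodular inequalities, which in the paper requires the second TP3-relation of the small box.
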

  \begin{proof}
Arguing as in the previous section and using Propositions~\ref{pr:diag}
and~\ref{pr:sub-zon}, we reduce the task to examination of the
3-dimensional boxes $B(1,1,2)$, $B(1,2,1)$ and $B(2,1,1)$. Below we
consider the case $B(1,2,1)$ (in the other two cases, the proof is
analogous and we leave it to the reader as an exercise). This case is
illustrated in the picture:

\unitlength=1mm \special{em:linewidth 0.6pt} \linethickness{0.6pt}
\begin{picture}(84.00,55.00)
\put(65.00,5.00){\vector(3,2){15.00}}
\put(80.00,15.00){\vector(-1,3){5.00}}
\put(75.00,30.00){\vector(-1,3){5.00}}
\put(70.00,45.00){\vector(-3,1){15.00}}
\put(65.00,5.00){\vector(-1,3){5.00}}
\put(60.00,20.00){\vector(-1,3){5.00}}
\put(55.00,35.00){\vector(3,2){15.00}}
\put(65.00,5.00){\vector(-3,1){15.00}}
\put(50.00,10.00){\vector(-1,3){5.00}}
\put(45.00,25.00){\vector(-1,3){5.00}}
\put(40.00,40.00){\vector(3,2){15.00}}
\put(60.00,20.00){\vector(3,2){15.00}}
\put(60.00,20.00){\vector(-3,1){15.00}}
\put(55.00,35.00){\vector(-3,1){15.00}}
\bezier{10}(50.00,10.00)(57.00,15.00)(65.00,20.00)
\bezier{8}(65.00,20.00)(72.00,17.00)(80.00,15.00)
\bezier{14}(65.00,20.00)(60.00,35.00)(55.00,50.00)
\bezier{8}(60.00,35.00)(68.00,32.00)(75.00,30.00)
\bezier{10}(60.00,35.00)(53.00,30.00)(45.00,25.00)
\put(60.00,35.00){\circle*{1.5}}
 \put(65.00,20.00){\circle*{1.5}}
\put(65.00,2.00){\makebox(0,0)[cc]{$A$}}
\put(61.00,23.00){\makebox(0,0)[cc]{$A'$}}
\put(55.00,38.00){\makebox(0,0)[cc]{$A''$}}
\put(47.00,8.00){\makebox(0,0)[cc]{$B$}}
\put(42.00,23.00){\makebox(0,0)[cc]{$B'$}}
\put(36.00,39.00){\makebox(0,0)[cc]{$B''$}}
\put(68.00,21.00){\makebox(0,0)[cc]{$C$}}
\put(63.00,37.00){\makebox(0,0)[cc]{$C'$}}
\put(56.00,53.00){\makebox(0,0)[cc]{$C''$}}
\put(83.00,16.00){\makebox(0,0)[cc]{$D$}}
\put(78.00,32.00){\makebox(0,0)[cc]{$D'$}}
\put(73.00,47.00){\makebox(0,0)[cc]{$D''$}}
 \put(56,6){\makebox(0,0)[cc]{$\xi_1$}}
 \put(65,14){\makebox(0,0)[cc]{$\xi_2$}}
 \put(75,8){\makebox(0,0)[cc]{$\xi_3$}}
 \end{picture}

There are two TP3-relations in the box, namely:
   \begin{equation}\label{*}
f(A')+f(C)=\max(f(B)+f(D'),f(B')+f(D))
  \end{equation}
and
  \begin{equation}\label{**}
f(A'')+f(C')=\max(f(B')+f(D''),f(B'')+f(D')).
  \end{equation}
The face (parallelogram) $AA''B''B$ gives the skew-submodular inequality
in the standard basis:
  \begin{equation}\label{ab}
  f(A'')+f(B)\le f(A')+f(B').
  \end{equation}
The face $AA''D''D$ gives one more skew-submodular inequality
\begin{equation}\label{ad}
  f(D)+f(A'')\le f(D')+f(A').
\end{equation}

First of all we prove inequality (\ref{slope}) (with $(i,j,k)=(1,2,3)$);
it is viewed as
  \begin{equation}\label{abcd}
f(B')+f(D')\ge f(A'')+f(C).
  \end{equation}
Adding $f(D')$ to (\ref{ab}) gives
  $$
f(A'')+f(B)+f(D')\le f(A')+f(B')+f(D').
  $$
Adding $f(B')$ to (\ref{ad}) gives
  $$
f(D)+f(A'')+f(B')\le f(A')+f(D')+f(B').
  $$
These inequalities together with~(\ref{*}) result in
$$
f(A'')+f(A')+f(C)\le f(A')+f(B')+f(D').
$$
Now the desired inequality~(\ref{abcd}) is obtained by canceling $f(A')$
in both sides.

Next we show validity of the other two skew-submodular inequalities in the
box, namely, those concerning the faces $BB''C''C$ and $DD''C''C$.

Adding (\ref{abcd}) and the inequality $f(B'')+f(D')\le f(A'')+f(C')$
(which is a consequence of (\ref{**})), we obtain
  $$
f(A'')+f(C)+f(B'')+f(D')\le f(B')+f(D')+f(A'')+f(C').
  $$
Canceling $f(A'')+f(D')$ in this inequality gives
  $$
f(C)+f(B'')\le f(B')+f(C'),
  $$
which is just the skew-submodular inequality for the face $BB''C''C$. The
skew-submodular inequality $f(C)+f(D'')\le f(D')+f(C')$ for the face
$DD''C''C$ is obtained in a similar way.
  \end{proof}


\section{Discrete concave TP-functions} \label{sec:concave}

In this section we combine the above submodular and skew-submodular
conditions on TP-functions.

Let us say that a TP-function $f$ on a box $B(a)$ is a {\em DCTP-function}
if
 \begin{equation}\label{DC}
f(x+1_i+1_j)+f(x+1_j+1_k) \ge f(x+2_j)+f(x+1_i+1_k)
 \end{equation}
holds for all $x\in B(a)$ and $i,j,k\in \{0\}\cup [n]$ such that the four
vectors in this relation belong to $B(a)$. Here $1_0$ means the zero
vector. Note that $i,j,k$ need not be ordered and some of them may
coincide.

  \smallskip
 \noindent
{\bf Remark 6.} The meaning of the abbreviation ``DC'' is that the
TP-functions obeying~(\ref{DC}) possess the property of discrete
concavity. More precisely, one can check that such functions satisfy
requirements in a discrete concavity theorem from~\cite[Ch.~6]{M}, and
therefore, they form a subclass of {\em polymatroidal concave functions},
or $M^\#$-concave functions, in terminology of that paper.

 \smallskip
Observe that if $j=0\ne i,k$ and $i\ne k$, then~(\ref{DC}) turns into the
submodular condition (cf.~\refeq{box_sub}). If $k=0\ne i,j$ and $i\ne j$,
then~(\ref{DC}) turns into the skew-submodular condition~(\ref{skew}). And
if $i=k=0$, then (\ref{DC}) turns into the concavity inequality
  $$
2f(x+1_j)\ge f(x)+f(x+2_j).
  $$
One easily shows that this inequality follows from submodular and
skew-submodular relations.

Now assume that none of $i,j,k$ is $0$. If all $i,j,k$ are different,
then~(\ref{DC}) is a consequence of the skew-submodularity, due to
Theorem~\ref{tm:skew}. Finally, if $i=k$, then~(\ref{DC}) turns into
  $$
2f(x+1_i+1_j) \ge f(x+2_i)+f(x+2_j),
  $$
which again is easily shown to follow from skew-submodular relations.

The above observations are summarized as follows.
  \begin{prop}  \label{pr:DC}
A TP-function on a box is a DCTP-function if and only if it is submodular
and skew-submodular.
  \end{prop}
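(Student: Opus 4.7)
My plan is to follow the case analysis already sketched in the paragraph preceding the proposition and supply the short algebraic derivations that the authors label ``easily shown''. The forward direction is immediate: specializing~(\ref{DC}) to $j=0$ with $i,k\in[n]$ distinct recovers the submodular inequality~\refeq{box_sub}, and specializing to $k=0$ (or symmetrically $i=0$) with the remaining two indices distinct and nonzero recovers the skew-submodular inequality~(\ref{skew}).

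For the converse I would check~(\ref{DC}) in every pattern of coincidences and zeros among $i,j,k$. Most patterns either reduce to an equality (when $i=j$, $j=k$, $i=j=0$, $j=k=0$, or $i=j=k=0$) or match the submodular or skew-submodular hypothesis directly. The three patterns that require a genuine argument are (a)~$i,j,k$ pairwise distinct and all nonzero; (b)~$i=k\ne 0$ with $j$ a distinct nonzero index; and (c)~the concavity $2f(x+1_\ell)\ge f(x)+f(x+2_\ell)$, which covers both $i=k=0,\ j=\ell$ and $j=0,\ i=k=\ell$.

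Case~(a) is exactly inequality~(\ref{slope}), derived from skew-submodularity in Theorem~\ref{tm:skew}. For case~(b) the target reads $2f(x+1_i+1_j)\ge f(x+2_i)+f(x+2_j)$; I would sum the skew-submodular inequalities $f(x+1_i+1_j)+f(x+1_i)\ge f(x+1_j)+f(x+2_i)$ (doubled coordinate $i$) and $f(x+1_i+1_j)+f(x+1_j)\ge f(x+1_i)+f(x+2_j)$ (doubled coordinate $j$), then cancel $f(x+1_i)+f(x+1_j)$. For case~(c), assuming $n\ge 2$, I would fix any $p\ne\ell$: if $x_p<a_p$, sum the submodular $f(x+1_p)+f(x+1_\ell)\ge f(x)+f(x+1_p+1_\ell)$ with the skew-submodular $f(x+1_p+1_\ell)+f(x+1_\ell)\ge f(x+1_p)+f(x+2_\ell)$, cancelling $f(x+1_p)+f(x+1_p+1_\ell)$; if instead $x_p\ge 1$, apply the analogous combination with base point $x-1_p$, using the skew-submodular $f(x+1_\ell)+f(x-1_p+1_\ell)\ge f(x)+f(x-1_p+2_\ell)$ and the submodular $f(x+1_\ell)+f(x-1_p+2_\ell)\ge f(x-1_p+1_\ell)+f(x+2_\ell)$, then cancelling $f(x-1_p+1_\ell)+f(x-1_p+2_\ell)$. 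Since $a_p\ge 1$, one of the two branches always applies.

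I do not anticipate a real obstacle: each nontrivial case is a two-line cancellation, and the only bookkeeping is to verify that every auxiliary vector used lies in $B(a)$, which is automatic from the hypothesis that the four vectors in~(\ref{DC}) do.
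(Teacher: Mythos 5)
Your proposal is correct and follows essentially the same route as the paper: the identical case analysis on the coincidence/zero patterns of $i,j,k$, with the all-distinct case delegated to inequality~(\ref{slope}) of Theorem~\ref{tm:skew}, and the remaining cases (the concavity inequality and the $i=k\ne 0$ case) obtained by summing two submodular/skew-submodular inequalities and cancelling — precisely the steps the paper declares ``easily shown,'' which you have verified correctly, including the domain checks and the (paper-implicit) assumption $n\ge 2$ needed to choose $p\ne\ell$.
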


This proposition and Theorems~\ref{tm:box_sub} and~\ref{tm:skew} give the
following
  \begin{corollary}  \label{cor:DC}
A TP-function $f$ on a box $B(a)$ is a DCTP-function if and only if it is
submodular and skew-submodular on the standard basis $Int(a)$.
  \end{corollary}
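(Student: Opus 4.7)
The proof is essentially a direct combination of the three preceding results: Proposition~\ref{pr:DC}, Theorem~\ref{tm:box_sub}, and Theorem~\ref{tm:skew}. The plan is to treat the two directions separately and invoke the appropriate machinery in each.

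For the ``only if'' direction, I would observe that the submodular and skew-submodular inequalities on $Int(a)$ are literally special instances of the DCTP inequality~\refeq{DC}: taking $j=0$ with distinct $i,k\in [n]$ yields the submodular inequality~\refeq{box_sub}, and taking $k=0$ with distinct $i,j\in [n]$ yields the skew-submodular inequality~(\ref{skew}). Hence if $f$ satisfies~\refeq{DC} on all of $B(a)$, then in particular it satisfies these inequalities whenever the four arguments lie in the standard basis $Int(a)$.

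For the ``if'' direction, the plan is to bootstrap the local information on $Int(a)$ to a global DCTP property. Suppose $f$ is a TP-function whose restriction to $Int(a)$ is both submodular and skew-submodular. By Theorem~\ref{tm:box_sub}, submodularity of $f$ on $Int(a)$ propagates via the TP3-recurrence to submodularity of $f$ on the whole box $B(a)$. Similarly, by Theorem~\ref{tm:skew}, skew-submodularity of $f$ on $Int(a)$ propagates to skew-submodularity of $f$ on all of $B(a)$ (and by the same theorem one also gets the ``slope'' inequalities~(\ref{slope})). Thus $f$ is both globally submodular and globally skew-submodular.

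Finally, applying Proposition~\ref{pr:DC}, which asserts the equivalence between being a DCTP-function and being simultaneously submodular and skew-submodular, we conclude that $f$ is a DCTP-function on $B(a)$. No obstacle arises here --- the heavy lifting has already been done in the two propagation theorems and in the case analysis of Proposition~\ref{pr:DC}; the corollary is simply their formal conjunction.
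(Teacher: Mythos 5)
Your proof is correct and follows exactly the paper's route: the paper derives Corollary~\ref{cor:DC} precisely by combining Proposition~\ref{pr:DC} with the two propagation results, Theorems~\ref{tm:box_sub} and~\ref{tm:skew}, just as you do (with the ``only if'' direction being the trivial specialization of~\refeq{DC}). Nothing is missing.
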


One can visualize this corollary by considering the standard tiling of the
zonogon $Z(a)$ (i.e., the RT-diagram associated with the standard basis
$Int(a)$). It contains ``big'' parallelograms $P(i,j)$ for $i<j$, where
$P(i,j)$ is the sub-zonogon $Z(a_{i+1}\xi_{i+1}+\ldots+a_{j-1}\xi_{j-1}\ ;
\ a_i1_i+a_j1_j)$. Subdivide each $ij$-rhombus
$[x,x+\xi_i,x+\xi_j,x+\xi_i+\xi_j]$ in $P(i,j)$ into two triangles by
drawing the diagonal $[x+\xi_i,x+\xi_j]$. This gives a triangulation of
$P(i,j)$; see the picture where $a_i=2$ and $a_j=3$.

 \unitlength=1mm \special{em:linewidth 0.6pt} \linethickness{0.6pt}
 \begin{picture}(75.00,45)
 \put(60,5){\line(1,2){15}}
 \put(50,10){\line(1,2){15}}
 \put(40,15){\line(1,2){15}}
 \put(60,5){\line(-2,1){20}}
 \put(65,15){\line(-2,1){20}}
 \put(70,25){\line(-2,1){20}}
 \put(75,35){\line(-2,1){20}}
 \put(50,10){\line(3,1){15}}
 \put(40,15){\line(3,1){30}}
 \put(45,25){\line(3,1){30}}
 \put(50,35){\line(3,1){15}}
 \put(65.00,8.00){\makebox(0,0)[cc]{$\xi_j$}}
 \put(53.00,6.00){\makebox(0,0)[cc]{$\xi_i$}}
 \end{picture}

In terms of such triangulations, the submodular and skew-submodular
conditions on $f$ say that for any two adjacent triangles $ABC$ and $BCD$,
one holds $f(B)+f(C)\ge f(A)+f(D)$. In other words, the affine
interpolation of $f$ within each parallelogram $P(i,j)$ is concave.

 \medskip
Next we use the above description of DCTP-functions on $B(a)$ to obtain a
characterization of DCTP-functions on a box truncated from above.
   \begin{prop}  \label{pr:trun}
A TP-function $f$ on a truncated box $B_0^{m'}(a)$ is a DCTP-function if
and only if it is submodular and skew-submodular on the standard basis
$\mathcal B=Int(a;1)\cup\ldots Int(a;m')$.
  \end{prop}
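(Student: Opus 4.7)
The $(\Rightarrow)$ direction is immediate: every DCTP inequality involving four vectors in $\mathcal B \subseteq B_0^{m'}(a)$ is already one of the defining DCTP inequalities for $f$ on the truncated box.

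For the $(\Leftarrow)$ direction, my plan is to extend $f$ to a DCTP function on the entire box $B(a)$ and then restrict back. By Theorem~A, $f$ extends to a TP-function $\tilde f$ on $B(a)$, with the extension freely parameterized by the choice of values on the additional fints $Int(a;m'+1)\cup\cdots\cup Int(a;|a|)$. By Corollary~\ref{cor:DC}, $\tilde f$ is DCTP on $B(a)$ if and only if its restriction to the full standard basis $Int(a)$ is both submodular and skew-submodular. If such an extension can be produced, then restricting the resulting DCTP function $\tilde f$ back to $B_0^{m'}(a)$ recovers $f$ (by injectivity of the restriction map, Proposition~\ref{pr:inj_gen}), and $f$ inherits DCTP since its defining inequalities are a subset of those satisfied by $\tilde f$.

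The task therefore reduces to the following extension problem: given a submodular and skew-submodular function on $\mathcal B$, extend it to a submodular and skew-submodular function on $Int(a)\supseteq\mathcal B$. Using the tiling interpretation given at the end of Section~\ref{sec:concave}, this amounts to extending a function that is concave on the lower triangular sub-region $\{(r,s):r+s+C_{ij}\le m'\}$ (where $C_{ij}=\sum_{i<k<j}a_k$) of the triangulated big parallelogram $P(i,j)$ to a function concave on all of $P(i,j)$, coherently across parallelograms sharing common fints. My plan for the construction is to process the fints of $Int(a)\setminus\mathcal B$ in order of increasing size, assigning each new fint $v$ at level $p>m'$ the tightest value $\tilde f(v)$ consistent with the submodular upper bounds $\tilde f(v)\le\tilde f(v_1)+\tilde f(v_2)-\tilde f(v_0)$ coming from rhombi $[v_0,v_1,v_2,v]$ in which $v$ is the top vertex and $v_0,v_1,v_2$ (at strictly lower levels) are already determined.

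The main obstacle is verifying that this layer-by-layer construction also satisfies the skew-submodular constraints together with those submodular constraints that couple pairs of not-yet-determined values at the same or adjacent levels. Handling this requires a propagation-of-concavity argument in the standard tiling of $Z(a)$, analogous in spirit to the propagation via normal mutations underlying Theorems~\ref{tm:box_sub} and~\ref{tm:skew}, but now adapted to extend a concave function outward from a partial basis rather than propagate it across bases of the same box.
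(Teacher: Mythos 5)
Your reduction is sound and is in fact the same one the paper uses: the easy direction is immediate, and for the converse you extend the restriction of $f$ on $\mathcal B$ to a submodular and skew-submodular function on the full standard basis $Int(a)$, invoke Corollary~\ref{cor:DC} to get a DCTP-function on $B(a)$, and recover $f$ on $B_0^{m'}(a)$ from the basis property of $\mathcal B$ (Theorem~A / Proposition~\ref{pr:inj_gen}). The problem is that everything beyond this reduction is left unproven. The entire content of the proposition is the extension statement, and your proposal for it --- assign to each new fint, layer by layer, the tightest value compatible with the submodular upper bounds from rhombi whose other three vertices are already determined --- is only a candidate scheme whose verification you yourself describe as ``the main obstacle.'' It is not a minor verification: the skew-submodular inequalities couple two undetermined values in the same layer (one entering with sign $+1$, one with sign $-1$), and the submodular inequality for a rhombus lying entirely above level $m'$ involves three undetermined values, so the constraints at a new layer are not just individual upper bounds and a greedy ``tightest value'' choice does not obviously admit a simultaneous feasible completion, nor does it obviously remain feasible at later layers. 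Some propagation or explicit construction argument is indispensable here, and it is missing.

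For comparison, the paper closes exactly this gap by a concrete construction. Submodularity plus skew-submodularity on $Int(a)$ is reinterpreted as discrete concavity of the affine interpolation on each big parallelogram $P(i,j)$ of the standard tiling of $Z(a)$; the parallelograms are split into those with all vertices in $\mathcal B$, those partially in $\mathcal B$, and those disjoint from $\mathcal B$. On the partially determined parallelograms (a two-dimensional truncated box each) the extension is produced by a convolution lemma: $\tilde h = h * \tilde g * \phi\{x\ge 0\}$ with prescribed concave boundary data $g$, using that the convolution of discrete concave functions is discrete concave; these parallelograms are processed in clockwise order so that the values assigned along shared wires are consistent. The fully undetermined parallelograms then receive separable discrete concave functions matching the one-dimensional concave boundary data already fixed on their wires. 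Either reproduce an argument of this kind or prove that your layer-by-layer scheme actually satisfies all same-layer couplings; as written, the proposal stops precisely at the step carrying the mathematical content.
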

   \begin{proof}
For convenience we identify the elements of $B_0^{m'}(a)$ with their
projections in the zonogon $Z(a)$. Then the basis $\mathcal B$ consists of
the vertices $x$ of the standard tiling of $Z(a)$ such that $|x|\le m'$.
The submodularity and skew-submodularity of $f$ on $\Bscr$ means that, in
the triangulation of each parallelogram $P(i,j)$ as above, the inequality
$f(B)+f(C)\ge f(A)+f(D)$ holds for every two adjacent triangles $ABC$ and
$BCD$ of the triangulation, with all $A,B,C,D$ occurring in $\Bscr$. We
assert that the restriction of $f$ to $\mathcal B$ can be extended to a
function $\tilde f$ on the standard basis $Int(a)$ of the entire box
$B(a)$ such that $\tilde f$ is submodular and skew-submodular.

With respect to $\mathcal B$, there are three groups of ``big''
parallelograms of the standard tiling of $Z(a)$. The first group consists
of those $P(i,j)$ which have all vertices in $\mathcal B$, the second one
consists of the $P(i,j)$'s having vertices in $\mathcal B$ and not in
$\mathcal B$, and the third one consists of the $P(i,j)$'s with all
vertices not in $\mathcal B$.

We order the parallelograms of the second group clockwise; then two
consecutive parallelograms share a common edge. The following claim will
be of use.

 \medskip
 \noindent
{\bf Claim}. {\em Let $h$ be a discrete concave function on a
2-dimensional truncated box $B_0^c(0|\, (a,b))$. When $c<b$, let $g$ be a
concave function on the segment $[c,b]$ such that $h(0,c)=g(c)$ and
$h(0,c-1)-h(0,c)\ge g(c)-g(c+1)$. Then there exists a discrete concave
function $\tilde h:B (0|\, (a,b))\to\mathbb R$ such that $\tilde h$
coincides with $h$ on $B_0^c(0| (a,b))$ and $\tilde h(0,k)=g(k)$ for all
$k\in [c,b]$. Symmetrically, when $c>b$, let $g$ be a concave function on
the segment $[c-b,a]$ such that $h(c-b,b)=g(c-b)$ and $h(c-b-1,
b)-h(c-b,b)\ge g(c-b)-g(c-b+1)$. Then there exists a discrete concave
function $\tilde h:B (0 |\,(a,b))\to\mathbb R$ such that $\tilde h$
coincides with $h$ on $B_0^c(0 |\,(a,b))$ and $\tilde h(k,b)=g(k)$ for all
$k\in [c-b,a]$. }

 \medskip
 \noindent
{\em Proof of the Claim.} Consider the case $c<b$. W.l.o.g., we may assume
that $h$ is monotone (otherwise we add to $h$ an appropriate separable
discrete concave function). Define $\tilde h$ by
  $$
\tilde h=h*\tilde g*\phi\{x\ge 0\},
  $$
where: $*$ denotes the convolution of two functions (namely,
$f_1*f_2(z)=\max_{x+y=z}\{f_1(x)+f_2(y)\}$); $\phi\{x\ge 0\}:=0$ for $x\ge
0$ and $-\infty$ otherwise; and $\tilde g=g*\phi\{y\ge 0\}$.

Then $\tilde h$ is a discrete concave function (as the convolution of
discrete concave functions is discrete concave as well; see,
e.g.,~\cite{M}). Obviously, $\tilde h(0,k)=g(k)$. By the discrete
concavity of $h$, we have $h(x,c-x-1)-h(x,c-x)\ge h(0,c-1)-h(0,c)$, and
since $h(0,c-1)-h(0,c)\ge g(c)-g(c+1)$, we obtain $h(x,c-x-1)-h(x,c-x)\ge
g(c)-g(c+1)$. This implies that $\tilde h$ coincides with $h$ on
$B_0^c(0|\,(a,b))$. In case $c>b$, we argue in a similar way. \qed

\smallskip
Using the Claim, we extend $f$, step-by-step in the clockwise order, into
a function which is discrete concave on each parallelogram of the second
group.

Now consider a wiring associated to the big parallelograms (cf.
Section~\ref{sec:tiling}). Then the parallelograms of the third group
induce a connected sub-wiring. The extension of $f$ to the parallelograms
of the second group assigns (1-dimensional) discrete concave boundary
values to the parallelograms of the third group, at most one boundary
function being assigned to each wire. Thus, we can extend these boundary
values to a separable discrete concave function on each parallelogram of
the third group.

This completes the proof of the proposition. \qed
  \end{proof}

As a consequence of this proposition, we obtain the following result; it
was announced without proof in~\cite{HS}.
  \begin{corollary} \label{cor:coro}
A TP-function $f$ is discrete concave on a simplex $B_m^{m}(m^n)$ if and
only if it is submodular and skew-submodular on the standard basis
$\mathcal B=Sint( m^n;m)\cup Int(m^n;m)$.
  \end{corollary}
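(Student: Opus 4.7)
The ``only if'' direction is immediate: if $f$ is discrete-concave on the entire slice, then the discrete-concavity inequalities~\refeq{DC} whose four arguments happen to lie in $\mathcal B$ are among those that hold, and these specialize to the submodular/skew-submodular-type inequalities asserted on $\mathcal B$ (with the understanding that on a slice the relevant instances of~\refeq{DC} arise when some of $i,j,k$ coincide or are all distinct).

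For the converse, my plan is to reduce to Proposition~\ref{pr:trun} by extending $f$ from the simplex to a TP-function $\tilde f$ on the larger truncated box $B_0^m(m^n)$. Theorem~A tells us that the data of $f$ on $\mathcal B$ together with any choice of values on $\bigcup_{k<m} Int(m^n;k)$ determines a unique TP-function $\tilde f$ on $B_0^m(m^n)$ whose restriction to the slice is $f$. I would choose these additional lower-layer values so that $\tilde f$ becomes submodular and skew-submodular on the standard basis $\mathcal B_{\text{box}}=\bigcup_{k=1}^m Int(m^n;k)$ of the truncated box. Once such an extension has been produced, Proposition~\ref{pr:trun} yields that $\tilde f$ is a DCTP-function on $B_0^m(m^n)$, and hence its restriction $f$ is discrete-concave on the simplex, as desired.

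The choice of lower-layer values would be constructed by downward induction on the layer index $k$ from $k=m-1$ down to $k=1$, using at each step the infimal-convolution (``saturation'') lemma stated as the Claim inside the proof of Proposition~\ref{pr:trun}. That lemma was designed exactly to extend discrete-concave boundary data from a truncated part of a two-dimensional box to the entire box while preserving discrete concavity, and I would apply it big-parallelogram-by-big-parallelogram of the standard tiling of the zonogon $Z(m^n)$, starting from the layer-$m$ boundary data given by $f|_{\mathcal B}$.

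The main obstacle is precisely the matching between the sint part $Sint(m^n;m)$ of $\mathcal B$ and the lower-layer intervals, because $Sint(m^n;m)$ is not part of $\mathcal B_{\text{box}}$. The sints in layer $m$ encode ``slopes'' that the downward extension must respect, and one has to verify (i)~that the infimal-convolution values chosen on $Int(m^n;k)$ for $k<m$ combine with the prescribed layer-$m$ values to produce, via Theorem~A, a TP-function $\tilde f$ whose actual values on the sints of layer $m$ coincide with the prescribed ones, and (ii)~that the submodular/skew-submodular inequalities propagate from $\mathcal B$ to all of $\mathcal B_{\text{box}}$. The hypothesis of the corollary on $\mathcal B$ is exactly what is needed as the valid seed to begin this layer-by-layer convolution extension, and the bookkeeping in~(i)--(ii) is the technical heart of the argument.
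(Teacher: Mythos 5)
The ``only if'' direction is fine, but your reduction for the converse has a genuine gap, and it sits exactly where you place it yourself. For the truncated box $B_0^m(m^n)$ the basis supplied by Theorem~A is $Int(m^n;1)\cup\ldots\cup Int(m^n;m)$ and contains no sints; consequently your claim that the data of $f$ on $\mathcal B$ together with an arbitrary choice of values on $\bigcup_{k<m}Int(m^n;k)$ determines a TP-function $\tilde f$ on $B_0^m(m^n)$ whose restriction to the slice is $f$ is not correct as stated: once the layer-$m$ interval values and the lower-layer interval values are fixed, the values of $\tilde f$ on $Sint(m^n;m)$ are forced by the TP-recurrence and need not agree with those of $f$, so $\tilde f$ restricted to the top layer need not be $f$ at all. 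Your items (i) and (ii) --- choosing lower-layer values that simultaneously reproduce the prescribed sint values and make $\tilde f$ submodular and skew-submodular on the whole standard basis of $B_0^m(m^n)$ --- are therefore not bookkeeping but the entire content of the proof, and nothing in the proposal establishes that such a choice exists. The only device in the paper that forces sint values from a lower layer, the large-$M$ construction in the proof of Proposition~\ref{pr:m-m-1}, adds terms $Mt(y)$ that there is no reason to expect to be compatible with the submodular and skew-submodular inequalities; and the Claim inside the proof of Proposition~\ref{pr:trun} extends concave data within two-dimensional parallelograms of the standard tiling, which is a different task and does not obviously resolve the matching problem.

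The paper's own argument bypasses the difficulty entirely: on the slice one has $x_1=m-(x_2+\cdots+x_n)$, so projecting along the first coordinate identifies $B_m^{m}(m^n)$ with the truncated box $B_0^m(m^{n-1})$ in dimension $n-1$; the TP-relations correspond under this identification, and $\mathcal B=Sint(m^n;m)\cup Int(m^n;m)$ is carried exactly onto $Int(m^{n-1};1)\cup\ldots\cup Int(m^{n-1};m)$ --- the sints of the slice become the lower-layer fuzzy-intervals after projection. Hence Proposition~\ref{pr:trun} applies verbatim, with no extension to construct and no sint matching to verify. In other words, the ``lower-layer values'' you propose to choose freely are, after projection, already prescribed by the values of $f$ on the sints, which is why the paper's one-line reduction works and your version requires an existence argument you have not supplied.
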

  \begin{proof}
Consider the projection of $B_m^{m}( m^n)$ into $\mathbb R^{n-1}$ along
the first coordinate. Then $f$ becomes a TP-function on the truncated box
$B_0^m(m^{n-1}))$, and $\mathcal B$ is projected to $Int(m^{n-1};1)\cup
\ldots\cup Int(m^{n-1};m)$. Now the result follows from
Proposition~\ref{pr:trun}.
  \end{proof}

\section{The tropical Laurent phenomenon} \label{sec:Laurent}

A collection of functions on a set $\Xscr$ is said to possess the {\em
Laurentness property} w.r.t. a subset $\Bscr\subset\Xscr$ if the values of
these functions on the elements in $\Xscr-\Bscr$ are expressed as Laurent
polynomials (depending on elements but not on functions) in the values on
$\Bscr$, whereas the latter values are usually assumed to be
``independent''. For the Laurent phenomenon under the octahedron and cube
recurrences, see~\cite{FZ,HS,Sp}.

Its tropical analogue, the {\em tropical Laurentness property}, means that
the value of a function $f$ on an element $x\in\Xscr-\Bscr$ is expressed
as
  $$
  f(x)=\max_{i=1,\ldots,N} \sum\nolimits_{y\in\Bscr}
    h_{i,y}f(y),
  $$
where the coefficients $h_{i,y}$ are integers depending on $x$ (but not
on $f$). In other words, $f(x)$
is represented by a piece-wise linear convex function of which arguments
are the values of $f$ on the elements of $\Bscr$.

In what follows we explain that the TP-functions possess the tropical
Laurentness property w.r.t. the standard basis, and moreover, estimate
the coefficients in the corresponding ``tropical Laurent polynomials''.

 \medskip
 \noindent
{\bf 1.}
We start with the TP-functions on the cube $C=2^{[n]}$. The fact that
$\Tscr(C)$ possesses the tropical Laurentness property w.r.t. the
standard basis $Int$ (consisting of the intervals in $[n]$) easily
follows from results in Section~\ref{sec:boolean}.

Indeed, by Theorem~\ref{tm:flow} and Proposition~\ref{pr:matr}, a function
$f\in\Tscr(C)$ one-to-one corresponds to an $n\times n$ weight matrix $W$ as
in~\refeq{Wp} (in our case $m=0$ and $W=W'$), and the value of $f$ on a set
$S\subseteq[n]$ is viewed as
   \begin{equation}  \label{eq:fS}
   f(S)=\max\{w(\Fscr)\colon \Fscr\in\Phi_S\},
   \end{equation}
where $\Phi_S$ is the set of admissible flows for $S$ (i.e., beginning
at the source set $\{s_p\colon p\in S\}$); cf.~\refeq{f-w}. Notice that
each flow $\Fscr$ in this expression is essential. Indeed, if we put
$w_{pq}:=1$ for all vertices $v_{pq}$ with $p\ge q$ covered by $\Fscr$,
and $w_{pq}:=0$ for the other vertices in the grid $\Gamma_{n,n}$, then
$\Fscr$ is the unique maximum-weight flow in $\Phi_S$ for this matrix $W$.
So the number of linear pieces (slopes) in~\refeq{fS} is just $|\Phi_S|$.

According to~\refeq{g-Wp}, the weight $w_{pq}$ of each vertex $v_{pq}$
can be expressed, by a linear form, via the values of $f$ on intervals:
   \begin{equation}  \label{eq:w-int}
   w_{pq}=\sum\nolimits_{I\in Int} h_{pq}(I)f(I),
   \end{equation}
where each coefficient $h_{pq}(I)$ is 0,1 or --1 ($h_{pq}$ is zero when
$p<q$). Taking the sum of weights $w_{pq}$ over the set
$\Pi(\Fscr)$ of pairs $pq$ concerning $\Fscr$ and substituting it
into~\refeq{fS}, we obtain the desired tropical Laurent polynomial:
  \begin{equation} \label{eq:laurent}
  f(S)=\max\left\{ \sum\nolimits_{I\in Int} h_\Fscr(I)f(I)\colon
              \Fscr\in \Phi_S \right\} ,
  \end{equation}
where $h_\Fscr:=\sum(h_{pq}\colon pq\in \Pi(\Fscr))$.

We assert that all coefficients $h_\Fscr(I)$ are between --1 and 2.

To show this, consider a path $P$ in $\Fscr$. For an intermediate vertex
$v=v_{pq}$ of $P$, we say that $P$ makes {\em right turn} at $v$ if the edge
$e$ of $P$ entering $v$ is horizontal (i.e., $e=(v_{p+1,q},v)$) while the
edge $e'$ leaving $v$ is vertical (i.e., $e'=(v,v_{q+1})$), and say that
$P$ makes {\em left turn} at $v$ if $e$ is vertical while $e'$ is
horizontal. Also if the first edge of $P$ is horizontal, we (conditionally)
say that $P$ makes left turn at its beginning vertex as well. Let $h_P$
denote the sum of functions $h_{pq}$ over the vertices $v_{pq}$
contained in $P$. The values of $h_P$ on the intervals can be calculated
by considering relations in~\refeq{g-Wp} and making corresponding
cancelations when moving along the path $P$. More precisely, one can see
that
  \begin{numitem}
(i) if $P$ makes left turn at $v_{pq}$, then $h_P([p-q+1..p])=1$ and
$h_P([p-q+1..p-1])=-1$ (unless $q=1$, in which case the interval
$[p-q+1..p-1]$ vanishes); (ii) if $P$ makes right turn at $v_{pq}$,
then $h_P([p-q+1..p])=-1$ and $h_P([p-q+1..p-1])=1$; and (iii) $h_P(I)=0$
for the remaining intervals $I$ in $[n]$.
  \label{eq:8path}
  \end{numitem}

This enables us to estimate the values of $h_\Fscr$, i.e., of the sum of
the functions $h_P$ over the paths $P$ in $\Fscr$. Consider an interval
$I=[c..d]$. Since the paths in $\Fscr$ are disjoint, \refeq{8path} shows
that there are at most two paths $P$ such that $h_P(I)\ne 0$. Therefore,
$|h_\Fscr(I)|\le 2$. Suppose $h_\Fscr(I)=-2$. Then $h_P(I)=h_{P'}(I)=-1$
for some (neighboring) paths $P,P'$ in $\Fscr$. In view of~\refeq{8path},
this can happen only if one of these paths makes right turn at the vertex
$v_{pq}$ with $p=d$ and $q=d-c+1$, while the other path makes left turn at
the vertex $v_{p+1,q+1}$. But then $P,P'$ must intersect; a contradiction.
See Fig.~\ref{fig:turns}(a).

\begin{figure}[htb]
 \begin{center}
  \unitlength=1mm
  \begin{picture}(110,20)(0,0)
  \put(30,5){\circle*{1}}
  \put(30,15){\circle*{1}}
  \put(40,5){\circle*{1}}
  \put(40,15){\circle*{1}}
  \put(30,5){\circle{2.5}}
   \put(37,5){\vector(-1,0){6.5}}
   \put(39.5,15){\vector(-1,0){6.5}}
   \put(30,5.5){\vector(0,1){6.5}}
   \put(40,8){\vector(0,1){6.5}}
  \put(10,0){(a)}
  \put(25,2){$v_{pq}$}
  \put(25,9){$P$}
  \put(42,9){$P'$}
  \put(41,16){$v_{p+1,q+1}$}
  \put(90,5){\circle*{1}}
  \put(90,15){\circle*{1}}
  \put(100,5){\circle*{1}}
  \put(100,15){\circle*{1}}
  \put(90,5){\circle{2.5}}
   \put(90.5,5){\vector(-1,0){6.5}}
   \put(107,15){\vector(-1,0){6.5}}
   \put(90,-2){\vector(0,1){6.5}}
   \put(100,15.5){\vector(0,1){6.5}}
  \put(70,0){(b)}
  \put(84,-1){$P$}
  \put(103,18){$P'$}
  \end{picture}
 \end{center}
\caption{(a) $h_\Fscr(I=[p-q+1..p])=-2$; \;\;\; (b)
$h_\Fscr([p-q+1..p])=2$.}
 \label{fig:turns}
  \end{figure}

Thus, $-1\le h_\Fscr(I)\le 2$, as required. (In fact, $h_\Fscr(I)=2$ is
possible; in this case there are two paths in $\Fscr$, one making left
turn at $v_{d,d-c+1}$, and the other making right turn at $v_{d+1,d-c+2}$.
See Fig.~\ref{fig:turns}(b).) Summing up the above reasonings, we obtain
the following
   \begin{prop} \label{pr:laur-cube}
The set of TP-function on the cube $2^{[n]}$ possesses the tropical
Laurentness property w.r.t. $Int$. This is expressed by~\refeq{laurent}
for each $S\in 2^{[n]}-Int$. Moreover, each coefficient $h_\Fscr(I)$ in
this expression is in $\{-1,0,1,2\}$.
   \end{prop}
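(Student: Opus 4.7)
The plan is to assemble the proof from ingredients already developed in Section~\ref{sec:boolean}, with the only new work being the $-1 \le h_\Fscr(I) \le 2$ bound. First, I would invoke Proposition~\ref{pr:matr} (applied with $m=0$, $m'=n$, so $W''$ is zero and $W=W'$) to associate with any normalized $f\in\Tscr(C)$ a unique $n\times n$ matrix $W\in\Wscr'(0,n)$ and, via Theorem~\ref{tm:flow}, write $f(S)=\max\{w(\Fscr)\colon\Fscr\in\Phi_S\}$ as in~\refeq{fS}. The triangular formulas~\refeq{g-Wp} express each entry $w_{pq}$ as a $\pm1,0$-linear combination of values of $f$ on at most four intervals (namely $\pi(p,q),\pi(p-1,q-1),\pi(p-1,q),\pi(p,q-1)$), yielding~\refeq{w-int} with $h_{pq}(I)\in\{-1,0,1\}$. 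Summing over the vertices covered by a flow $\Fscr\in\Phi_S$ and plugging into~\refeq{fS} gives~\refeq{laurent} with $h_\Fscr=\sum_{pq\in\Pi(\Fscr)} h_{pq}$, establishing the tropical Laurentness.

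The substantive step is the bound $h_\Fscr(I)\in\{-1,0,1,2\}$. For this I would analyze, for a single path $P\in\Fscr$, the telescoping that occurs in $h_P=\sum h_{pq}$ when the four-term formula in~\refeq{g-Wp} is summed over consecutive grid vertices along $P$. The key observation is that along a straight (vertical or horizontal) segment of $P$, the contributions at consecutive vertices telescope to zero, so only the \emph{turn vertices} of $P$ (including its initial and final vertices, treated as ``degenerate'' turns) produce nonzero coefficients. This yields~\refeq{8path}: at a left turn (including a horizontal first edge) at $v_{pq}$ one gets $h_P([p{-}q{+}1..p])=+1$ and $h_P([p{-}q{+}1..p{-}1])=-1$, and the opposite signs for a right turn, with all other intervals receiving $0$.

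From~\refeq{8path}, each interval $I=[c..d]$ can receive a nonzero contribution from a given path $P$ only via its turns at the two specific grid points $v_{d,d-c+1}$ and $v_{d+1,d-c+2}$. Since paths in $\Fscr$ are vertex-disjoint, at most two paths in $\Fscr$ can contribute nonzero $h_P(I)$, and these two paths must pass through these two specific grid points respectively. This immediately gives $|h_\Fscr(I)|\le 2$. To rule out $h_\Fscr(I)=-2$, I would argue geometrically as in Fig.~\ref{fig:turns}(a): attaining $-1$ from two different paths forces one path to make a right turn at $v_{pq}$ (with $p=d$, $q=d-c+1$) while the other makes a left turn at $v_{p+1,q+1}$; the orientations of these turns then force the two paths to share a vertex, contradicting the disjointness requirement for admissible flows. (By contrast, $h_\Fscr(I)=+2$ is realized when the turn orientations are reversed, as in Fig.~\ref{fig:turns}(b), where the two paths remain disjoint.)

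The main obstacle is just this last case analysis: verifying rigorously that the ``$-2$'' configuration of turn directions is geometrically incompatible with vertex-disjointness of the paths, while the ``$+2$'' configuration is compatible. Everything else reduces to bookkeeping for the telescoping sum~\refeq{8path} and to straightforward application of Theorem~\ref{tm:flow} and Proposition~\ref{pr:matr}.
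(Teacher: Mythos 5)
Your proposal is correct and follows essentially the same route as the paper: representing $f$ via the flow-in-matrix model of Theorem~\ref{tm:flow} and Proposition~\ref{pr:matr}, expressing the weights through~\refeq{g-Wp}, reducing $h_P$ to turn vertices as in~\refeq{8path}, and excluding $h_\Fscr(I)=-2$ by the vertex-disjointness of the two paths through $v_{d,d-c+1}$ and $v_{d+1,d-c+2}$. The only difference is presentational (your explicit telescoping along straight segments is exactly the cancellation the paper invokes implicitly), so no gap remains.
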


(Note that the lower and upper bounds --1 and 2 on the ``tropical
monomial'' coefficients in this expression are similar to those on the
exponents of face variables established by Speyer and stated in the Main
Theorem of~\cite{Sp}, where algebraic Laurent polynomials are considered.)

 \medskip
 \noindent
{\bf Remark 7.} Adding an appropriate expression to each sum in the
maximum, one can re-write~\refeq{laurent} in the form
  \begin{multline*}
  f(S)=\max\left\{ \sum\nolimits_{I\in Int} h'_{\Fscr}(I)f(I)
   \colon \Fscr\in\Phi_S \right\}        \\
   -\sum(f(I)\colon I\in Int,\, I\subseteq[\min(S)+1..\max(S)-1]),
    \end{multline*}
where all coefficients $h'_\Fscr(I)$ are nonnegative integers not
exceeding 3. For example, for 2-element sets $ik=\{i,k\}$, $i<k$, one can
obtain
  \begin{multline*}
  f(ik)=\max\nolimits_{i<j<k}\{f(i)+\ldots+f(j-1)+f(j,j+1)+f(j+2)
                  +\ldots +f(k)\} \\
    -f(i+1)-\ldots -f(k-1).
   \end{multline*}

 \noindent
{\bf Remark 8.} The admissible flows figured in~\refeq{laurent} can be
replaced by somewhat more transparent objects. Let us say that a
triangular array $A=(a_{ij})$, $1\le j\le i\le k$, of size $k$ is a {\em
semi-strict Gelfand-Tsetlin pattern} if $a_{i,j-1}<a_{ij}\le a_{i+1,j-1}$
holds for all $i,j$. (Classical Gelfand-Tsetlin patterns~\cite{GT}, or
GT-patterns, are defined by the non-strict inequalities in both sides.)
The tuple $a_{11}<a_{21}<\ldots<a_{k1}$ is called the {\em shape} of $A$.
For each $S\subseteq[n]$, there is a bijection between the set $\Phi_S$ of
admissible flows for $S$ and the set of semi-strict GT-patterns of size
$|S|$ with the shape $p_1<\ldots<p_{|S|}$, where $S=\{p_1,\ldots,
p_{|S|}\}$.

Indeed, given $\Fscr\in\Phi_S$, let $P_i$ be the path in $\Fscr$ beginning
at $s_{p_i}$. Let $V_i$ be the set of vertices entered by vertical edges
of $P_i$ plus the source $s_{p_i}$. The second coordinate of the vertices
in $V_i$ runs from 1 through $i$ (along $P_i$) and we denote these
vertices as $v_{a_{ij},j}$, $j=1,\ldots,i$. Then the admissibility of
$\Fscr$ implies that the arising triangular array $(a_{ij})$ (of size
$|S|$) is a semi-strict GT-pattern. Conversely, given a semi-strict
pattern $A$ of size $k$ with $a_{k1}\le n$, one can uniquely construct an
admissible flow $\Fscr$ in which the vertices entered by vertical edges
are just $v_{a_{ij},j}$ for $i=1,\ldots,k$ and $j=2,\ldots,i$, and the
sources are $v_{a_{i1},1}=s_{a_{i1}}$, $i=1, \ldots,|S|$.

For a semi-strict GT-pattern $A$ of shape $p_1<\ldots<p_k\le n$ and a
TP-function $f$ on $2^{[n]}$, define
   $$
 \hat f(A):=\sum\nolimits_{i,j} \Delta f([a_{ij}-j+1..a_{ij}]),
   $$
where for an interval $I=[c..d]$,
  $$
  \Delta f(I):=f(I)+f(I-\{c,d\})-f(I-\{c\})-f(I-\{d\})
  $$
if $c<d$, and $\Delta f(I):=f(I)$ if $c=d$ (assuming $f(\emptyset)=0$).
One can check that for  an admissible flow $\Fscr$ and its corresponding
semi-strict GT-pattern $A$, $\hat f(A)$ is equivalent to
$\sum_{I\in Int} h_\Fscr(I)$. This and Proposition~\ref{pr:laur-cube}
give the following
  \begin{corollary}  \label{cor:laur-GT}
For a TP-function $f$ on $2^{[n]}$ and a subset $S=\{p_1,\ldots,p_{|S|}\}
\subseteq[n]$ with $p_1<\ldots<p_{|S|}$, one holds
  $$
  f(S)=\max\ \hat f(A),
  $$
where the maximum is taken over all semi-strict GT-patterns $A$ with the
shape $p_1<\ldots< p_{|S|}$.
  \end{corollary}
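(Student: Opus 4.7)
The plan is to derive the statement directly from Proposition~\ref{pr:laur-cube} combined with the bijection between admissible flows and semi-strict Gelfand--Tsetlin patterns indicated in Remark~8. Proposition~\ref{pr:laur-cube} (via Theorem~\ref{tm:flow} and Proposition~\ref{pr:matr}) expresses $f(S)=\max_{\Fscr\in\Phi_S}w(\Fscr)$, where $w$ is the weighting on the grid $\Gamma_{n,n}$ built from the values of $f$ on $Int$ via~\refeq{g-Wp} in the Boolean case $m=0$ (so that $W''=0$ and $W=W'$). Hence the corollary will follow once (i) $\Phi_S$ is re-parametrized by semi-strict GT-patterns of shape $p_1<\ldots<p_{|S|}$, and (ii) the weight of each flow matches the value of $\hat f$ at the corresponding pattern.

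For (i), I would use the correspondence of Remark~8: to $\Fscr=\{P_1,\ldots,P_k\}\in\Phi_S$ assign the triangular array $A(\Fscr)=(a_{ij})_{1\le j\le i\le k}$ whose $(i,j)$-entry is the first coordinate of the unique vertex of $V_i$ (the source of $P_i$ together with the vertices entered by vertical edges of $P_i$) having second coordinate $j$. The required interlacing on $A$ follows from the left/up structure of $\Gamma_{n,n}$ and the disjointness of the paths $P_i$, $P_{i+1}$, while the shape is $(p_1,\ldots,p_k)$ since $s_{p_i}$ is the only vertex of $V_i$ in the first row. The construction is invertible: from $A$ one reconstructs each $P_i$ by joining the vertices $v_{a_{ij},j}$ in increasing order of $j$ with the forced horizontal and vertical edges.

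For (ii), the key device is the auxiliary potential $\phi_q(p):=f([p-q+1..p])-f([p-q+2..p])$, extended by $\phi_q(p):=0$ for $p<q$, which satisfies $w_{pq}=\phi_q(p)-\phi_q(p-1)$ by direct inspection of~\refeq{g-Wp}. Summing $w_{pq}$ along each $P_i$ grouped by the row coordinate $q$ telescopes in the first coordinate to $\sum_{q=1}^{i}[\phi_q(a_{iq})-\phi_q(a_{i,q+1}-1)]$ (using the convention $a_{i,i+1}:=1$), and a simple index shift together with $\phi_0\equiv 0$ rewrites this as $\sum_{j=1}^{i}[\phi_j(a_{ij})-\phi_{j-1}(a_{ij}-1)]$, which equals $\sum_{j=1}^{i}\Delta f([a_{ij}-j+1..a_{ij}])$ by the definition of $\Delta f$. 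This yields the flow-wise identity $w(\Fscr)=\hat f(A(\Fscr))$, and the main obstacle is precisely this bookkeeping: correctly identifying the entry/exit points of $P_i$ in each row, handling the boundary terms at the source and at the sink $v_{1,i}$, and checking that degenerate situations (two consecutive vertical edges of $P_i$ in one column) contribute zero on both sides. Taking the maximum over $\Fscr\in\Phi_S$, equivalently over semi-strict GT-patterns $A$ of shape $p_1<\ldots<p_{|S|}$, then yields the corollary.
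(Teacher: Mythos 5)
Your proposal is correct and takes essentially the same route as the paper: Proposition~\ref{pr:laur-cube} combined with the flow--pattern bijection of Remark~8, the only substantive point being the flow-wise identity $w(\Fscr)=\hat f(A(\Fscr))$, which the paper leaves as ``one can check'' and which your potential $\phi_q$ with the telescoping/index-shift computation verifies correctly. One small remark: the degenerate case of two consecutive vertical edges does not ``contribute zero on both sides'' (the single vertex at the intermediate height contributes $\phi_q(a_{iq})-\phi_q(a_{iq}-1)$), but your grouped formula with $a_{i,q+1}=a_{iq}$ already handles it, so nothing is lost.
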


 \noindent
{\bf 2.} Next we consider a truncated cube $C_m^{m'}\subset 2^{[n]}$. In
this case the tropical Laurentness property for the TP-functions w.r.t.
the standard basis $\Bscr$ is shown in a similar way as for the entire
cube $2^{[n]}$. There are only two differences. The first one is that
expression~\refeq{w-int} should be modified if $q\le m$ (and $p>m$). Now
the weight $w_{pq}$ is expressed by involving corresponding sesquialteral
intervals according to the map $\pi$ in~\refeq{g-Wp}. The second
difference is that for each set $S\in C_m^{m'}-\Bscr$, one should consider
only those admissible flows $\Fscr$ for $S$ that cover all vertices
$v_{pq}$ with $p,q\le m$ (for otherwise $w(\Fscr)$ tends to $-\infty$ when
the number $M$ in~\refeq{Wpp} increases). We call such a flow {\em strong}
and denote the set of strong flows for $S$ by $\Phi_S^{{\rm st}}$. Then
the statement in Proposition~\ref{pr:laur-cube} is modified as follows.
  \begin{prop}  \label{pr:laur-trunc}
For the TP-functions $f$ on $C_m^{m'}$,
   $$
 f(S)=\max\left\{ \sum\nolimits_{X\in\Bscr} h_\Fscr(X)f(X)\colon
              \Fscr\in \Phi_S^{{\rm st}}\right\}
   $$
holds for each set $S\in C_m^{m'}$, where $\Bscr$ is the standard basis for
$C_m^{m'}$.
Also each coefficient $h_\Fscr(X)$ in this expression is in
$\{-1,0,1,2\}$.
  \end{prop}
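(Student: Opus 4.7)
The plan is to adapt the three-step argument behind Proposition~\ref{pr:laur-cube} to the truncated cube. By Proposition~\ref{pr:matr} I can encode any normalized $f\in\Tscr^0(C_m^{m'})$ by the unique matrix $W=W'+W''$ with $W'\in\Wscr'(m,m')$, and by~\refeq{f-w} write $f(S)=\max_{\Fscr\in\Phi_S}w(\Fscr)$. The first step is to show that, for $M$ large enough (as prescribed after~\refeq{Wpp}), the maximum is attained only by strong flows, for every $S\in C_m^{m'}$. Any admissible flow that fails to cover some vertex $v_{pq}$ with $2\le p\le m$ and $q<p$ loses at least $M$ on its $W''$ contribution relative to a strong flow, since on a strong flow the $W''$-part equals $w''([m]\times[m])=0$ by the cancellation in~\refeq{zerosum} (the $-\frac{m(m-1)}{2}M$ at $v_{11}$ offsets the $m(m-1)/2$ below-diagonal $+M$ entries). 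Hence $f(S)=\max_{\Fscr\in\Phi_S^{\rm st}}w'(\Fscr)$, with $W''$ silently dropping out.

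Next I would rewrite $w'(\Fscr)$ as a linear combination of basis values by summing the explicit formulas~\refeq{g-Wp} over the vertices covered by $\Fscr$. Each $w'_{pq}$ is a $\pm 1$ combination of the values $f(\pi(p,q)), f(\pi(p-1,q-1)), f(\pi(p-1,q)), f(\pi(p,q-1))$, where $\pi$ is the bijection $\Pi\to\Bscr^0$ from~\refeq{corresp} sending $(p,q)$ to the interval $[p-q+1..p]$ when $q\ge m$ and to the sint $[1..m-q]\sqcup[p-q+1..p]$ when $q<m$. As in the cube case, telescoping along a single path $P\in\Fscr$ reduces the contribution to the turn vertices of $P$, yielding an analog of~\refeq{8path}: at a left (resp.\ right) turn at $v_{pq}$ the path contributes $+1$ (resp.\ $-1$) to $\pi(p,q)$ and $-1$ (resp.\ $+1$) to $\pi(p,q-1)$, and zero to every other basis element. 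The only novelty is that $\pi(p,q)$ may now be a sint, but this does not affect the combinatorics of cancellation.

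The main obstacle will be confirming the uniform bound $h_\Fscr(X)\in\{-1,0,1,2\}$ in the presence of both intervals and sints. Fix $X\in\Bscr^0$ and let $(p,q):=\pi^{-1}(X)$. Since the paths of $\Fscr$ are vertex-disjoint and $X$ is touched, by the local rule above, only by paths turning at $v_{pq}$ or $v_{p+1,q+1}$, at most two paths of $\Fscr$ contribute to $h_\Fscr(X)$, giving $|h_\Fscr(X)|\le 2$. The extreme value $h_\Fscr(X)=-2$ would force two paths of $\Fscr$ to make, respectively, right and left turns at $v_{pq}$ and $v_{p+1,q+1}$, as in Fig.~\ref{fig:turns}(a); but then they would have to intersect, violating disjointness. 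This argument is purely local and planar, hence insensitive to whether $q<m$ or $q\ge m$, and rules out the value $-2$ uniformly. Combining the previous steps then yields the desired tropical Laurent expression with coefficients in $\{-1,0,1,2\}$.
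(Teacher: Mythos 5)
Your overall route is the paper's own: encode a normalized $f$ via the matrix $W=W'+W''$ of Proposition~\ref{pr:matr}, note that for large $M$ only strong flows can attain the maximum in~\refeq{f-w} (a strong flow's $W''$-contribution cancels to zero, a non-strong one pays at least $-M$), so that $f(S)=\max_{\Fscr\in\Phi_S^{\rm st}}w'(\Fscr)$, and then convert $w'(\Fscr)$ into a combination of basis values by summing~\refeq{g-Wp} and telescoping along each path to its turn vertices. These are exactly the two modifications the paper indicates for the truncated case (sesquialteral intervals entering through $\pi$, and the restriction to strong flows), and your first step is fine, indeed a bit more explicit than the paper's.

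There is, however, a concrete error in your restatement of the local rule~\refeq{8path}, and as written it falsifies the identity you are proving. At a left (right) turn at $v_{pq}$ the path contributes $\pm 1$ to $\pi(p,q)$ and $\mp 1$ to $\pi(p-1,q-1)$, i.e.\ to the interval $[p-q+1..p-1]$ (or, when $q\le m$, to the sint $[1..m-q+1]\sqcup[p-q+1..p-1]$), and \emph{not} to $\pi(p,q-1)=[p-q+2..p]$ as you state. Already in the cube case your rule gives the wrong coefficients: the path $(v_{2,1},v_{2,2},v_{1,2})$ has weight $w_{21}+w_{22}=f(12)-f(1)$ by~\refeq{g-Wp}, whereas your rule yields $f(12)-f(2)$; so the displayed Laurent expression with your coefficients would simply be false. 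Note also that your subsequent locality claim --- that $X=\pi(p,q)$ is affected only by turns at $v_{pq}$ and at $v_{p+1,q+1}$ --- is the one that follows from the \emph{correct} rule, since $\pi\bigl((p+1)-1,(q+1)-1\bigr)=\pi(p,q)$; under the rule you actually wrote, the second relevant vertex would be $v_{p,q+1}$ and the crossing argument of Fig.~\ref{fig:turns}(a) would have to be redone. So the slip looks typographical, but it sits in the step that defines $h_\Fscr$; replacing $\pi(p,q-1)$ by $\pi(p-1,q-1)$ restores the paper's argument (including the bound $-1\le h_\Fscr(X)\le 2$) essentially verbatim.
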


 \noindent
{\bf 3.} Finally, we consider an $n$-dimensional box $B(a)$. To show the
Laurentness property for $\Tscr(B(a))$ w.r.t. the standard basis $Int(a)$
(consisting of the fuzzy-intervals), we follow the method in
Section~\ref{sec:gen}, by embedding $B(a)$ into the Boolean cube
$C=2^{[N]}$, where $N=|a|$, and then use the Laurentness property for the
latter.

For a vector $x\in B(a)$, define the subset $[x]\subseteq [N]$
by~\refeq{map[]}. By Proposition~\ref{pr:surj0}, for each TP-function $f$
on $B(a)$, there exists a TP-function $g$ on $C$ such that
  $$
f(x)=g([x]) \qquad\mbox{for all $x\in B(a)$}.
  $$
Recall that the function $g$ constructed in the proof of
Proposition~\ref{pr:surj0} is such that
  $$
g(I)=f(\#(I))+M\eps(I) \qquad\mbox{for each $I\in Int_N$},
  $$
where $\eps(I)$ is the excess of $I$, and $M$ a large positive integer
(cf.~\refeq{quasi}). By Proposition~\ref{pr:laur-cube}, $g([x])$ is
expressed via a tropical Laurent polynomial in variables $g(I)$, where $I$
runs over the set $Int_N$ of intervals in $[N]$. Also the vector $\#(I)$
is a fuzzi-interval for each $I\in Int_N$. Since the convexity preserves
under affine transformations of variables, we obtain that for each vector
$x\in B(a)-Int(a)$, the values $f(x)$ for $f\in\Tscr(B(a))$ are
represented by a piece-wise linear convex function of the arguments
$f(y)$, $y\in Int(a)$, and therefore, $\Tscr(B(a))$ has the Laurentness
property w.r.t. $Int(a)$.

More precisely, $f(x)$ is expressed by the tropical Laurent polynomial
  \begin{equation} \label{eq:laur-box}
  f(x)=\max_{\Fscr\in\Phi_{[x]}} \sum\nolimits_{I\in Int_N}
   h_\Fscr(I)(f(\#(I))+M\eps(I)),
  \end{equation}
where, as before, $\Phi_{S}$ denotes the set of flows in $C$ with the
source set corresponding to $S$.

It remains to estimate the coefficients in the tropical monomials
in~\refeq{laur-box}. The sum concerning a flow $\Fscr\in\Phi_{[x]}$ is of
the form $\sum(\alpha_\Fscr(y)\colon y\in Int(a))+\beta_\Fscr M$ for some
integers $\alpha_\Fscr(y),\beta_\Fscr$. Since $M$ is large,
$\beta_\Fscr\le 0$. Also if $\beta_\Fscr<0$, then the flow $\Fscr$ can be
ignored. So we can consider in~\refeq{laur-box} only the flows $\Fscr$
with $\beta_\Fscr=0$; we call them {\em regular} and denote the set of
these by $\Phi_{[x]}^{{\rm ref}}$.

For $i=1,\ldots,n$, let $\Pi_i$ denote the set of pairs $pq$ such that
$\bar a_{i-1}+1<p\le\bar a_i$ and $q< p-\bar a_{i-1}$, and let $\Gamma_i$
be the triangular sub-grid of $\Gamma_{N,N}$ induced by the vertices
$v_{pq}$ for $pq\in\Pi_i$. Consider a regular flow $\Fscr$ and a path
$P\in\Fscr$; let $h_P(I)$ ($I\in Int_N$), $\alpha_P(y)$ ($y\in Int(a)$)
and $\beta_P$ be the corresponding numbers for $P$. When $P$ makes a turn
at a vertex $v=v_{pq}$, we denote the interval $[p-q+1..p]$ by $I_{pq}$,
and $[p-q+1..p-1]$ by $I'_{pq}$; then the contribution to $\beta_P$ from
$v$ is $\beta_{pq}:=\eps(I'_{pq})-\eps(I_{pq})\le 0$ in case of right
turn, and $\beta_{pq}:=\eps(I_{pq})-\eps(I'_{pq})\ge 0$ in case of left
turn. 
  
  \smallskip
  \noindent
{\bf Claim.} {\em (i) $P$ cannot make any turn within
$\Gamma_1\cup\ldots\cup\Gamma_n$, and (ii) $\beta_P=0$.}

 \smallskip
 \begin{proof}
Suppose $P$ makes a turn at a vertex of some $\Gamma_i$. Let $P$
begin at a block $L_j$ (more precisely, at a source $s_r$ with $r\in
L_j$). The case $i=j$ is impossible (since $[x]\cap L_i$ consists of the
first $x_i$ elements of $L_i$, whence the paths beginning at $L_i$ cannot
make any turn before the ``diagonal'' $\{v_{p'q'}\colon p'\in L_i,\,
q'=p'-\bar a_{i-1}\}$). Therefore, $j>i$. Let $P$ make turns at the
sequence $v_{p(1)q(1)},\ldots,v_{p(k)q(k)}$ of vertices in $\Gamma_i$;
then $p(1)-q(1)>\ldots>p(k)-q(k)>0$. Clearly the edge entering
$v_{p(1)q(1)}$ is horizontal; so $P$ makes right turn at $v_{p(1)q(1)}$.
For $d=1,\ldots,k$, we have $\beta_{p(d)q(d)}=-(p(d)-q(d))$ if $d$ is odd,
and $\beta_{p(d)q(d)}=p(d)-q(d)$ if $d$ is even (taking into account that
$P$ makes right turn at $v_{p(1)q(1)}$, that the size of the head of
$I_{p(d)q(d)}$ is greater by 1 than that of $I'_{p(d)q(d)}$, and the shift
of the head of each of these intervals is equal $p(d)-q(d)$). This implies
that the contribution to $\beta_P$ from the vertices of $P$ within
$\Gamma_i$ is strictly negative.

Also if $P$ makes a turn at an intermediate vertex $v_{pq}$ not in
$\Gamma_1\cup\ldots\cup\Gamma_n$, then $\beta_{pq}=0$ (since one can see
that either $I_{pq},I'_{pq}$ have equal heads, or the shifts of the heads
of these intervals are zero, or both, yielding equal excesses). And if $P$
makes left turn at its beginning vertex, $v_{p1}$ say, then $p$ is the
beginning of a block, and therefore, $\beta_{p1}=0$. Thus, $\beta_P\le 0$.
Now the claim follows from the fact that $\sum(\beta_P\colon P\in\Fscr)$ 
amounts to $\beta_\Fscr=0$.
 \end{proof}

From (i) in the Claim it follows that for each fint $y\in Int(a)$ with 
$|\suppo(y)|=1$, if $\alpha_\Fscr(y)\ne 0$, then there is exactly one 
interval $I\in Int_N$ such that $\#(I)=y$ and $h_\Fscr(I)\ne 0$. A similar
property is trivial if $|\suppo(y)|>1$.  This implies
$\alpha_\Fscr(y)=h_\Fscr(I)$, whence $-1\le\alpha_\Fscr(y)\le 2$. Summing
up the above reasonings, we conclude with the following
   \begin{prop} \label{pr:laur-box}
The set of TP-function $f$ on a box $B(a)$ possesses the tropical
Laurentness property w.r.t. the set $Int(a)$ of fuzzy-intervals. For each
$x\in B(a)-Int(a)$, the value $f(x)$ is expressed as
  $$
  f(x)=\max\nolimits_{\Fscr\in\Phi_{[x]}^{{\rm reg}}} \sum\nolimits_{I\in Int_N}
   h_\Fscr(I)f(\#(I)),
  $$
and all coefficients in the tropical monomials of this expression are in
$\{-1,0,1,2\}$.
   \end{prop}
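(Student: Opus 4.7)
\medskip
\noindent
\textbf{Proof proposal.}
The plan is to reduce the statement to the Boolean case already settled in Proposition~\ref{pr:laur-cube} via the embedding $x\mapsto[x]$ of $B(a)$ into $2^{[N]}$, where $N:=|a|$. First I would invoke Proposition~\ref{pr:surj0}: its construction extends any $f\in\Tscr(B(a))$ to a TP-function $g$ on $2^{[N]}$ with $g([x])=f(x)$ for every $x\in B(a)$, and moreover yields the explicit formula $g(I)=f(\#(I))+M\eps(I)$ on every interval $I\in Int_N$, where $M$ is a large positive parameter. Applying Proposition~\ref{pr:laur-cube} to $g$ at $[x]$ then gives
\[
f(x)=\max_{\Fscr\in\Phi_{[x]}}\bigl(\alpha_\Fscr(f)+\beta_\Fscr M\bigr),\qquad
\alpha_\Fscr(f):=\sum_{I\in Int_N} h_\Fscr(I)f(\#(I)),\quad
\beta_\Fscr:=\sum_{I\in Int_N} h_\Fscr(I)\eps(I).
\]
Since the left-hand side is independent of $M$, flows with $\beta_\Fscr<0$ are dominated and flows with $\beta_\Fscr>0$ would make the expression unbounded in $M$; the maximum is therefore attained precisely on the set $\Phi_{[x]}^{{\rm reg}}$ of regular flows ($\beta_\Fscr=0$), which yields expression~\refeq{laur-box}.

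The main obstacle will be the coefficient bound $\alpha_\Fscr(y)\in\{-1,0,1,2\}$ for $y\in Int(a)$, and the key leverage is a sharp accounting of the forbidden turn locations of a regular flow. I would decompose $\beta_\Fscr=\sum_{P\in\Fscr}\beta_P$ path-by-path and split each $\beta_P$ over the turn vertices of $P$ as $\beta_{pq}:=\eps(I'_{pq})-\eps(I_{pq})\le 0$ for a right turn and $\beta_{pq}:=\eps(I_{pq})-\eps(I'_{pq})\ge 0$ for a left turn, where $I_{pq}:=[p-q+1..p]$ and $I'_{pq}:=[p-q+1..p-1]$. A direct case check on the head-sizes and shifts forces $\beta_{pq}=0$ for every turn outside the triangular sub-grids $\Gamma_i:=\{v_{pq}:\bar a_{i-1}+1<p\le\bar a_i,\ q<p-\bar a_{i-1}\}$. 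Inside a single $\Gamma_i$, any path $P$ reaching $\Gamma_i$ must start at a source in some block $L_j$ with $j>i$, so it enters $\Gamma_i$ horizontally; its first turn in $\Gamma_i$ is therefore a right turn with contribution $-(p(1)-q(1))<0$, and successive turns alternate at strictly decreasing values of $p(d)-q(d)$, producing a telescoping sum that is strictly negative. Hence $\beta_P\le 0$ always, with equality iff $P$ has no turn in any $\Gamma_i$; combined with $\sum_P\beta_P=0$ for a regular flow, no path of a regular flow may turn inside any $\Gamma_i$.

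Finally I would read off the coefficient bound for $\alpha_\Fscr(y)$. For $y\in Int(a)$ with $|\suppo(y)|\ge 2$, the fuzzy-interval condition forces the interior blocks $L_k$, $c(y)<k<d(y)$, of $y$ to be fully contained in any preimage interval, which pins the head within $L_{c(y)}$ and the tail within $L_{d(y)}$; hence exactly one $I\in Int_N$ satisfies $\#(I)=y$, and $\alpha_\Fscr(y)=h_\Fscr(I)\in\{-1,0,1,2\}$ by Proposition~\ref{pr:laur-cube}. For $y=c\cdot 1_i$ with $1\le c\le a_i$, all intervals with $\#(I)=y$ lie inside $L_i$; any two distinct such intervals would force the path through their corresponding grid vertices to turn inside $\Gamma_i$, which is forbidden for regular flows. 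So at most one $I$ contributes, and again $\alpha_\Fscr(y)=h_\Fscr(I)\in\{-1,0,1,2\}$, completing the argument.
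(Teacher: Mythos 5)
Your proposal follows the paper's proof essentially verbatim: the same embedding $x\mapsto[x]$ combined with the extension $g(I)=f(\#(I))+M\eps(I)$ from Proposition~\ref{pr:surj0}, the same reduction to regular flows via the coefficient $\beta_\Fscr$ of $M$, the same path-by-path claim that turns inside the triangular sub-grids $\Gamma_i$ contribute strictly negatively (hence are excluded for regular flows) while all other turns contribute zero, and the same final counting showing that for each fint $y$ at most one interval $I$ with $\#(I)=y$ carries a nonzero coefficient, giving the bounds $-1\le h_\Fscr\le 2$ inherited from Proposition~\ref{pr:laur-cube}. The only spot where you are terser than the paper is the exclusion of the case of a path turning inside $\Gamma_i$ while starting in the block $L_i$ itself; the paper rules this out by observing that $[x]\cap L_i$ consists of the first $x_i$ elements of $L_i$, so such a path cannot turn before reaching the diagonal of its own block.
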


 \end{document}